\documentclass[a4paper,12pt]{article}
\textheight 9.16in 
\textwidth 6.05in
\hoffset -.350in
\topmargin -.30in
\usepackage{graphicx,amssymb}
\usepackage{amssymb,amsmath}
\usepackage{hyperref}
\usepackage{amsfonts}
\usepackage{mathrsfs}
\usepackage{amscd}
\usepackage{amsthm}
\usepackage{color}
\usepackage{tikz-cd}
\usepackage[utf8]{inputenc}
\usepackage{longtable}
\numberwithin{equation}{section}

\usepackage{amssymb,amsmath,amsthm,authblk}
\usepackage{enumitem}
\newtheorem{theorem}{Theorem}
\newtheorem{corollary}{Corollary}
\newtheorem{remark}{Remark}
\newtheorem{lemma}{Lemma}

\begin{document}
\title{ \textbf{Spectra of eccentricity matrix of $H$-join of graphs}}
\baselineskip 16pt
\author{\large S. Balamoorthy and  T. Kavaskar 
\footnote{Corresponding Author. \\ E-Mail addresses: moorthybala545@gmail.com (S. Balamoorthy), t\_kavaskar@yahoo.com  (T. Kavaskar)}\\
{\small Department of Mathematics,\\ Central University of Tamil Nadu,\\ Thiruvarur 610 005, India.}}
\date{\today}
\maketitle      
\begin{abstract} Let $\varepsilon(G)$ be the eccentricity matrix of a graph $G$ and $Spec(\varepsilon(G))$ be the eccentricity spectrum of $G$. Let $H[G_1,G_2,\ldots, G_k]$ be the $H$-join of graphs $G_1,G_2,\ldots, G_k$ and let $H[G]$ be lexicographic product of $H$ and $G$. This paper finds the eccentricity matrix of a $H$-join of graphs. Using this result, we find (i) $Spec(\varepsilon(H[G]))$ in terms of $Spec(\varepsilon(H))$ if the radius $(rad(H))$ of $H$ is at least three; (ii) $Spec(\varepsilon(K_k[G_1,G_2,\ldots, G_k]))$ if $\Delta(G_i)\leq |V(G_i)|-2$ which generalises some of the  results in \cite{Mahato1}; (iii) $Spec(\varepsilon(H[G_1,G_2,\ldots, G_k]))$  if $rad(H)\geq 2$ and $G_i$ is complete whenever $e_H(i)=2$, which generalises some of the  results in  \cite{Mahato1} and \cite{Wang1}. Finally, we find the characteristic polynomial of $\varepsilon(K_{1,m}[G_0,G_1,\ldots, G_m])$ if $G_i$'s are regular. As a result, we deduce some of the results in \cite{Li}, \cite{Mahato1}, \cite{Patel} and \cite{Wang}.\\
\medskip
\noindent {\bf Keywords:} Eccentricity matrix, Spectra of graphs, Energy of graphs, Spectral radius, $H$-join of graphs.\\
\noindent {\bf Subject Classifications:} 05C50,  05C12.\medskip
\end{abstract}

\section{Introduction}
We consider only finite simple undirected graphs. Let $G=(V(G), E(G))$ be a graph. For any $v \in V(G)$, $\text{deg}_G(v)$ denotes the degree of $v$ and $\Delta(G)$ denotes the maximum degree of $G$. For a subset $U$ of $V(G)$, $\langle U \rangle$ denotes the subgraph induced by $U$ of $G$. A graph $G$ is $k$-regular (where $k$ is a non-negative integer) if $\text{deg}_G(v)=k$, for all $v\in V(G)$. For a graph $G$, the \emph{complement} of $G$, denoted by $\overline{G}$, is the graph with $V(\overline{G})=V(G)$ and $E(\overline{G})=\{uv: uv\notin E(G)\}$.
Let $H$ be a graph with $V(H)=\{1,2,\dots, k\}$ and let $\mathcal{F}= \{G_1,G_2,\ldots,G_k\}$ be a family of graphs. The \emph{$H$-join} operation of the graphs $G_1,G_2,\ldots,G_k$, denoted by  $H[G_1,G_2,\ldots,G_k]$, is obtained by replacing the vertex $i$ of $H$ by the graph $G_i$ for $1 \leq i \leq k$ and every vertex of $G_i$ is made adjacent with every vertex of $G_j$, whenever $i$ is adjacent to $j$ in $H$, that is, the vertex set $V(H[G_1,G_2,\ldots,G_k])= \displaystyle \bigcup_{\ell =1}^k V(G_{\ell})$ and edge set $E(H[G_1,G_2,\ldots,G_k]) =  \displaystyle \big(\bigcup_{\ell=1}^k E(G_{\ell}) \big) \cup \big(\bigcup_{ij \in E(H)} \{uv : u \in V(G_i), v \in V(G_j)\}\big)$ (see \cite{Saravanan}). If $G\cong G_i$, for $1\leq i\leq k$, then $H[G_1,G_2,\ldots, G_k]\cong H[G]$, the \emph{lexicographic product} of $H$ and $G$. If $H=K_2$, then $K_2[G_1,G_2]$ is the join, denoted by $G_1\vee G_2$, of $G_1$ and $G_2$.

Consider two disjoint graphs $G_1$ and $G_2$ with distinguished vertices $v_1$ and $v_2$, respectively. The coalescence
of two disjoint graphs $G_1 \ast G_2 = (G_1, v_1) \ast (G_2, v_2)$ is the graph obtained by gluing $G_1$ and $G_2$ at the distinguished vertices (see \cite{Patel}). For the positive integers $t\geq 2$ and $n\geq 1$, the windmill graph $ W_{n+1}^t$ is the graph obtained by taking $t$ copies of complete graph $K_{n+1}$ with a vertex in common. Note that the windmill graph $W_{n+1}^t \cong \underbrace{K_{n+1} \ast \cdots \ast K_{n+1}}_{\textit{t term}}$ (See \cite{Patel}). 
One can see that, $W_{n+1}^t \cong K_{1,t}[K_1,K_{n_1},K_{n_2},\ldots, K_{n_t}]$, where $n_i =n$, for $1 \leq i \leq t$.

Given a family of graphs $H, G_1,G_2, \ldots,G_k$, with $V(H)=\{1,2,\ldots,k\}$, the \emph{generalized corona}, denoted $H \tilde{\circ} \wedge_{i=1}^{k} G_{i}$, is the graph obtained by taking one copy of graph $H$, $ G_1,G_2, \ldots,G_k$ and joining the $i^{th}$ vertex of $H$ to every vertex of $G_i$. If $G\cong G_i$, for $1\leq i\leq k$, then the graph $H \tilde{\circ} \wedge_{i=1}^{k} G_{i}$ is called the \emph{corona product} of $H$ and $G$, denoted by $H \circ G$.



For an  $n\times n$ complex matrix $B$ with eigenvalues $\lambda_1,\lambda_2, \ldots \lambda_n$, let $\phi(\lambda, B)= \det(\lambda I-B)$ denote  the characteristic polynomial of $B$ and $E(B)=\sum_{i=1}^n|\lambda_i|$ represent the energy of $B$. 
The spectral radius of $B$ is defined as 
$\rho(B)=$max$\{|\lambda|: \lambda $ is an eigenvalue of $B\}$.
If $\lambda_1,\lambda_2,\ldots, \lambda_s$ are the only distinct eigenvalues of $B$, then the spectrum of $B$, denoted by $Spec(B)$, is defined as follows
\begin{align*}
Spec(B)= \begin{Bmatrix}
\lambda_1 & \lambda_2 & \ldots & \lambda_s\\
m_1 & m_2 & \ldots & m_s
\end{Bmatrix},   
\end{align*}
where $\lambda_i$  is the eigenvalue and $m_i$
is the algebraic multiplicity  for $1 \leq i \leq s$.
Note that, if  $B$ is a real symmetric matrix, then all the eigenvalues of $B$ are real. The inertia of $B$ is characterized by the triplet of integers $\big(N_+(B), \ N_0(B), \ N_{-}(B) \big)$, where $N_+(B)$, $N_0(B)$, and $N_{-}(B)$ denote the counts of positive, null, and negative eigenvalues of $B$, respectively. For a  block matrix $B_{ij}$ of $B$, denote $B_{ij}(r,s)$ the ${rs}^{ th}$ entry of $B_{ij}$.

For a graph $G$ with vertex set $V(G)=\{v_1,v_2,\ldots, v_n\}$, the adjacency matrix of $G$ is $A(G)= (b_{ij})$, where. 
\begin{align*}
 b_{ij}=\begin{cases}
\text{1} &\quad\text{if $v_iv_j\in E(G)$,} \\
\text{0} &\quad\text{otherwise.}
\end{cases}   
\end{align*}

For a connected graph $G$ and $u,v\in V(G)$, the distance between $u$ and $v$ in $G$, denoted by $d_G(u,v)$, is the
length of a shortest path joining them in $G$ and the eccentricity of $u$, denoted by $e_G(u)$, is defined as $e_G(u)  = max\{d_G(u,v) : v\in V(G)\}$. The radius of $G$, denoted by $rad(G)$, is the minimum eccentricity of all vertices of $G$.

The eccentricity matrix, initially proposed by Randić \cite{Randic} as the DMax-matrix in 2013 and later renamed the Ecc-matrix by Wang et al. \cite{Wang} in 2018, is defined for a connected graph $G$ with $V(G)=\{v_1,v_2,\ldots, v_n\}$ is $\varepsilon(G) = (a_{ij})$, where 
\begin{align*}
 a_{ij}=\begin{cases}
\text{$d_G(v_i,v_j)$} &\quad\text{if $d_G(v_i,v_j) = min\{e_G(v_i),e_G(v_j)\}$} \\
\text{$0$} &\quad\text{otherwise.}
\end{cases}   
\end{align*}

Randić \cite{Randic} observed that the eccentricity matrix has potential applications not only in Chemical Graph Theory but also in Graph Theory, due to the importance of eccentricities in graph analysis. More recently, studies \cite{Mahato1, Wang1} have investigated additional spectral properties of the eccentricity matrix and explored the energy associated with it.

Wang et al. \cite{Wang1} introduced the concept of eccentricity energy, named as $\varepsilon$-energy, for a graph $G$ with $n$ vertices. This is formally defined as
\begin{align*}
    E(\varepsilon(G)) = \displaystyle \sum_{i=1}^n|\lambda_i|,
\end{align*}
where $\lambda_1, \lambda_2,\ldots, \lambda_n$ are the eigenvalues of $\varepsilon(G)$. The least eigenvalue of $\varepsilon(G)$ is denoted by $\xi(G)$. The characteristic polynomial, spectrum, energy, spectral radius and inertia of $G$ we mean that, the characteristic polynomial, spectrum, energy, spectral radius and inertia, respectively of its eccentricity matrix $\varepsilon(G)$ of $G$.

In \cite{Mahato2}, Mahato and Kannan examined the inertia of the eccentricity matrices of trees and characterized those trees that have exactly three distinct eccentricity eigenvalues. Additionally, He and Lu \cite{He} investigated trees with a given order and an odd diameter, focusing on maximizing the spectral radius of the eccentricity matrix. There are several papers on the eccentricity energy and eccentricity eigenvalues of graphs. See \cite{He}, \cite{Li}-\cite{Mahato2}, \cite{Sorgun}-\cite{Wang1}. 

We state the following results in \cite{Cvet, Cvetkovic, Roger, Saravanan, You} which will be used throughout this paper. 

\begin{lemma}[\cite{Cvet}]\label{BKP1008}
Let $A, B, C$ and $D$ be matrices such that $M=
\begin{bmatrix}
A & B\\
C & D
\end{bmatrix}
$. If $A$ is invertible, then  $\det(M) = \det(A) \det(D - CA^{-1}B).$
\end{lemma}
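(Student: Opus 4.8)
The plan is to prove the identity by exhibiting an explicit block factorization of $M$ into a product of two matrices whose determinants are immediate, and then to invoke multiplicativity of the determinant. The hypothesis that $A$ is invertible is used only to make the factorization legitimate.

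First I would write
\[
M=\begin{bmatrix} A & B\\ C & D\end{bmatrix}
=\begin{bmatrix} A & 0\\ C & I\end{bmatrix}
\begin{bmatrix} I & A^{-1}B\\ 0 & D-CA^{-1}B\end{bmatrix},
\]
and verify it by carrying out the block multiplication: the top row gives $A\cdot I=A$ and $A\cdot A^{-1}B=B$, the bottom-left gives $C\cdot I=C$, and the only block that requires a line of computation is the bottom-right one, where $C A^{-1}B+I\cdot(D-CA^{-1}B)=D$. Next I would take determinants of both sides. Each factor is block triangular with an identity block on the diagonal, so by the block-triangular determinant rule one has $\det\begin{bmatrix} A & 0\\ C & I\end{bmatrix}=\det(A)\det(I)=\det(A)$ and $\det\begin{bmatrix} I & A^{-1}B\\ 0 & D-CA^{-1}B\end{bmatrix}=\det(I)\det(D-CA^{-1}B)=\det(D-CA^{-1}B)$. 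Applying $\det(XY)=\det(X)\det(Y)$ then yields $\det(M)=\det(A)\det(D-CA^{-1}B)$, as claimed.

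The only step that deserves genuine care — the \emph{main obstacle}, such as it is in a statement of this kind — is justifying the block-triangular determinant formula itself, since that is what makes the two factors tractable. I would supply it either by a Laplace (cofactor) expansion, repeatedly expanding $\det\begin{bmatrix} A & 0\\ C & I\end{bmatrix}$ along its last columns (each containing a single nonzero entry equal to $1$) until only $\det(A)$ remains, or, alternatively, by performing elementary row operations that bring the block $A$ to upper triangular form without disturbing the zero block, so that the whole matrix becomes triangular and its determinant is read off the diagonal. Everything else in the argument is a routine verification.
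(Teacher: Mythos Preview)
Your proof is correct and is the standard block-LU (Schur complement) argument for this identity. The paper does not actually prove this lemma: it is stated with a citation to \cite{Cvet} and used as a black box, so there is nothing in the paper to compare your approach against.
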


\begin{lemma}[\cite{Cvetkovic}]\label{BKP1010}
Let $N_1,N_2,N_3,N_4$ be $n\times n$ matrices. If $N_1$ is invertible and $N_1N_3 = N_3N_1$, then $\begin{vmatrix}
 N_1 & N_2\\
 N_3 & N_4
\end{vmatrix} = \begin{vmatrix}
    N_1N_4 - N_3N_2
\end{vmatrix}.$    
\end{lemma}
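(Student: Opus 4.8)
The statement is a classical block-determinant identity, so the plan is short: reduce it to an ordinary determinant via the Schur complement and then exploit the commutativity hypothesis. Since $N_1$ is invertible, I would first apply Lemma~\ref{BKP1008} with $A=N_1$, $B=N_2$, $C=N_3$, $D=N_4$ to obtain
\begin{align*}
\begin{vmatrix} N_1 & N_2 \\ N_3 & N_4 \end{vmatrix}
= \det(N_1)\,\det\!\big(N_4 - N_3 N_1^{-1} N_2\big).
\end{align*}

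Next I would use multiplicativity of the determinant to absorb the factor $\det(N_1)$:
\begin{align*}
\det(N_1)\,\det\!\big(N_4 - N_3 N_1^{-1} N_2\big)
= \det\!\big(N_1(N_4 - N_3 N_1^{-1} N_2)\big)
= \det\!\big(N_1 N_4 - N_1 N_3 N_1^{-1} N_2\big).
\end{align*}
Finally, multiplying the hypothesis $N_1 N_3 = N_3 N_1$ on the right by $N_1^{-1}$ gives $N_1 N_3 N_1^{-1} = N_3$, and substituting this into the last display yields
$\begin{vmatrix} N_1 & N_2 \\ N_3 & N_4 \end{vmatrix} = \det(N_1 N_4 - N_3 N_2) = \begin{vmatrix} N_1 N_4 - N_3 N_2\end{vmatrix}$,
which is the desired identity.

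There is essentially no real obstacle here, since the result is standard and only quoted for later use; the one point requiring attention is the final step, where the hypothesis $N_1 N_3 = N_3 N_1$ is indispensable --- without it one is left with $\det(N_1 N_4 - N_1 N_3 N_1^{-1} N_2)$, which in general differs from $\det(N_1 N_4 - N_3 N_2)$. If one wished to weaken the invertibility assumption on $N_1$, a standard continuity argument (replace $N_1$ by $N_1 + tI$, which is invertible for all but finitely many $t$, apply the identity, and let $t \to 0$) would suffice, but this refinement is not needed for the statement as given.
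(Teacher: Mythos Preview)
Your proof is correct. The paper does not give its own proof of this lemma; it is simply quoted from \cite{Cvetkovic} as a known result, so there is nothing to compare against beyond noting that your Schur-complement argument via Lemma~\ref{BKP1008} is the standard derivation.
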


\begin{lemma}[\cite{Cvetkovic}]\label{BKP1013}
Let $G$ be an $r$-regular graph of order $n$. 
If $r, \lambda_2,\ldots,\lambda_n$ are eigenvalues of the adjacency matrix $A(G)$ of $G$, then  $n-r-1, -(\lambda_2 +1),\ldots,-(\lambda_n +1)$ are eigenvalues of the adjacency matrix $A(\overline{G})$ of $\overline{G}$.    
\end{lemma}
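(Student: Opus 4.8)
The plan is to exploit the complementarity identity $A(G) + A(\overline{G}) + I = J$, where $J$ denotes the $n \times n$ all-ones matrix, combined with the fact that regularity forces the all-ones vector to be an eigenvector of $A(G)$.

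First I would record that since $G$ is $r$-regular, the all-ones column vector $\mathbf{1}$ of length $n$ satisfies $A(G)\mathbf{1} = r\mathbf{1}$, so $r$ is an eigenvalue of $A(G)$ with eigenvector $\mathbf{1}$. Since $A(G)$ is a real symmetric matrix, $\mathbb{R}^n$ admits an orthonormal basis $\{\tfrac{1}{\sqrt{n}}\mathbf{1}, x_2, \ldots, x_n\}$ consisting of eigenvectors of $A(G)$, where $A(G)x_i = \lambda_i x_i$ and $x_i \perp \mathbf{1}$ for $2 \le i \le n$; the existence of such a basis is the only point that needs a word of justification, and it follows from the spectral theorem (eigenvectors attached to distinct eigenvalues are automatically orthogonal, while inside the $r$-eigenspace one simply extends $\tfrac{1}{\sqrt{n}}\mathbf{1}$ to an orthonormal basis).

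Next, for each $i \ge 2$, orthogonality $x_i \perp \mathbf{1}$ gives $J x_i = 0$, hence
\begin{align*}
A(\overline{G}) x_i = (J - I - A(G)) x_i = 0 - x_i - \lambda_i x_i = -(\lambda_i + 1) x_i ,
\end{align*}
so $-(\lambda_i + 1)$ is an eigenvalue of $A(\overline{G})$ with eigenvector $x_i$; and for the remaining basis vector,
\begin{align*}
A(\overline{G}) \mathbf{1} = (J - I - A(G)) \mathbf{1} = n\mathbf{1} - \mathbf{1} - r\mathbf{1} = (n - r - 1) \mathbf{1} ,
\end{align*}
so $n - r - 1$ is an eigenvalue of $A(\overline{G})$ with eigenvector $\mathbf{1}$. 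Since $\{\mathbf{1}, x_2, \ldots, x_n\}$ is a basis of $\mathbb{R}^n$, these $n$ scalars, counted with multiplicity, are precisely the eigenvalues of $A(\overline{G})$, which is the assertion.

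I do not anticipate any genuine obstacle; the computation is a one-line consequence of the identity $A(G)+A(\overline{G})+I=J$, and the subtlety in the bookkeeping of multiplicities is handled automatically by working with an orthonormal eigenbasis rather than with individual eigenvalues. An alternative route would be to compute $\phi(\lambda, A(\overline{G})) = \phi(\lambda, J - I - A(G))$ directly using a rank-one perturbation determinant formula, but the eigenvector argument above is shorter and more transparent.
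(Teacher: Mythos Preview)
Your proof is correct and is in fact the standard argument for this classical result. The paper does not supply its own proof of this lemma; it is merely quoted from \cite{Cvetkovic} as a known preliminary, so there is nothing to compare against beyond noting that your eigenbasis argument via $A(G)+A(\overline{G})+I=J$ is exactly the textbook route.
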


\begin{lemma}[\cite{Roger}]\label{BKP1002}
Let $A \in \mathbb{R}^{n \times n}$
and $B \in \mathbb{R}^{m \times m}$. If $Spec(A)$ and $Spec(B)$ denote the spectra of matrices $A$ and $B$ respectively, then the spectrum of their Kronecker product $A \otimes B$ is given by  $spec(A \otimes B) = \{\lambda \mu
:  \lambda \in Spec(A), \mu \in Spec(B) \}$.    
\end{lemma}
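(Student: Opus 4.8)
The plan is to prove this by simultaneous Schur triangularization over $\mathbb{C}$, which handles eigenvalues and their multiplicities at once. Since every square complex matrix is unitarily similar to an upper triangular one, write $A = U_1 T_1 U_1^{*}$ and $B = U_2 T_2 U_2^{*}$ with $U_1, U_2$ unitary and $T_1, T_2$ upper triangular, so that the diagonal of $T_1$ lists the eigenvalues $\lambda_1,\ldots,\lambda_n$ of $A$ with multiplicity and the diagonal of $T_2$ lists the eigenvalues $\mu_1,\ldots,\mu_m$ of $B$ with multiplicity. (Although the hypothesis is $A,B\in\mathbb{R}^{n\times n}$, the spectra may be complex, so the argument is carried out over $\mathbb{C}$.)

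Next I would invoke the mixed-product property of the Kronecker product, $(P\otimes Q)(R\otimes S)=(PR)\otimes(QS)$, together with $(P\otimes Q)^{*}=P^{*}\otimes Q^{*}$, to obtain
$$ A\otimes B = (U_1 T_1 U_1^{*})\otimes(U_2 T_2 U_2^{*}) = (U_1\otimes U_2)(T_1\otimes T_2)(U_1\otimes U_2)^{*}. $$
The same property gives $(U_1\otimes U_2)(U_1\otimes U_2)^{*}=(U_1U_1^{*})\otimes(U_2U_2^{*})=I$, so $U_1\otimes U_2$ is unitary and hence $A\otimes B$ is unitarily similar to $T_1\otimes T_2$; in particular the two matrices have the same characteristic polynomial and the same spectrum with multiplicities. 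Finally, in the standard lexicographic ordering of the tensor basis, $T_1\otimes T_2$ is block upper triangular with diagonal blocks $\lambda_i T_2$, each of which is upper triangular with diagonal entries $\lambda_i\mu_1,\ldots,\lambda_i\mu_m$; thus $T_1\otimes T_2$ is itself upper triangular with diagonal $\{\lambda_i\mu_j : 1\le i\le n,\ 1\le j\le m\}$, and the eigenvalues of a triangular matrix are precisely its diagonal entries counted with multiplicity. This yields $\phi(\lambda, A\otimes B)=\prod_{i=1}^{n}\prod_{j=1}^{m}(\lambda-\lambda_i\mu_j)$, hence $Spec(A\otimes B)=\{\lambda\mu : \lambda\in Spec(A),\ \mu\in Spec(B)\}$.

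The only delicate point — and the step I would be most careful about — is the multiplicity bookkeeping: one must work throughout with the ordered $n$-tuple and $m$-tuple of diagonal entries of the Schur forms rather than with the underlying sets, so that the claimed spectral equality holds at the level of characteristic polynomials and not merely of eigenvalue sets. A much shorter argument establishes the easy inclusion only: if $Ax=\lambda x$ and $By=\mu y$ with $x,y\neq 0$, then the mixed-product property gives $(A\otimes B)(x\otimes y)=(Ax)\otimes(By)=\lambda\mu\,(x\otimes y)$ and $x\otimes y\neq 0$, so every product $\lambda\mu$ is an eigenvalue; but this does not by itself account for all $nm$ eigenvalues or their multiplicities, which is why the Schur-triangularization route is the one I would carry out in full.
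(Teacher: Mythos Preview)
Your proof is correct; the Schur-triangularization argument you give is the standard one (and is essentially the proof in the cited reference \cite{Roger}). The paper itself does not supply a proof of this lemma---it is quoted as a known result from Horn and Johnson---so there is no ``paper's own proof'' to compare against beyond observing that your approach matches the classical treatment in that source.
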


\begin{lemma}[\cite{Roger1}]\label{BKP1017}
 Two symmetric  matrices $A$ and $B$ of the same size have the same number of positive, negative and zero eigenvalues if and only if they are congruent (ie., $ B=SAS^{T}$, for some non-singular matrix S).    
\end{lemma}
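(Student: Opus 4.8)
The plan is to recognize this statement as Sylvester's law of inertia and prove the two implications separately, routing both through the canonical diagonal form $J_{p,q,z} := \mathrm{diag}(I_p,\,-I_q,\,0_z)$, which depends only on the inertia triple.

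For the direction ``same inertia $\Rightarrow$ congruent,'' suppose $A$ and $B$ share the triple $\big(N_+,N_0,N_-\big)=(p,z,q)$. First I would invoke the spectral theorem to write $A=Q\Lambda Q^{T}$ with $Q$ orthogonal and $\Lambda=\mathrm{diag}(\lambda_1,\dots,\lambda_n)$, ordering the $\lambda_i$ as the $p$ positive eigenvalues, then the $q$ negative ones, then the $z$ zeros. Let $D$ be the diagonal matrix with $i$-th entry $|\lambda_i|^{-1/2}$ when $\lambda_i\neq 0$ and $1$ when $\lambda_i=0$; then $S_A:=QD$ is nonsingular (orthogonal times a nonzero-diagonal matrix) and $S_A^{T}AS_A=D\Lambda D=J_{p,q,z}$. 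Thus $A$ is congruent to $J_{p,q,z}$, and by the identical construction applied to $B$ so is $B$. Since congruence is an equivalence relation (reflexive, symmetric, and transitive in the obvious way), $A$ and $B$ are congruent, which produces the required nonsingular $S$.

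For the direction ``congruent $\Rightarrow$ same inertia,'' suppose $B=SAS^{T}$ with $S$ nonsingular. Here the key tool is the congruence-invariant variational description
\[
N_+(A)=\max\{\dim W : W\subseteq\mathbb{R}^n \text{ a subspace and } x^{T}Ax>0 \text{ for every } 0\neq x\in W\}.
\]
To establish it I would again diagonalize $A=Q\Lambda Q^{T}$: the span $W_+$ of the eigenvectors with positive eigenvalue has dimension $N_+(A)$ and the form is positive on $W_+\setminus\{0\}$, whereas any subspace of dimension exceeding $N_+(A)$ must, by a dimension count, meet nontrivially the $(n-N_+(A))$-dimensional span of the remaining eigenvectors, on which the form is $\leq 0$. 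Granting this, observe that $x^{T}Bx=(S^{T}x)^{T}A(S^{T}x)$ and $x\mapsto S^{T}x$ is a linear automorphism of $\mathbb{R}^n$, hence it carries positive subspaces for $B$ bijectively onto positive subspaces of the same dimension for $A$; therefore $N_+(B)=N_+(A)$. Applying the same reasoning to $-A$ and $-B$ gives $N_-(B)=N_-(A)$, and then $N_0(B)=n-N_+(B)-N_-(B)=N_0(A)$.

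I expect the only slightly delicate points to be the dimension-counting step inside the variational lemma (extracting a nonzero vector on which the quadratic form is nonpositive) and the bookkeeping in the first implication, namely ensuring that $D$ leaves untouched the coordinates corresponding to zero eigenvalues so that $S_A$ stays invertible; beyond that the argument is routine.
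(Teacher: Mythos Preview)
Your proof is correct and is the standard argument for Sylvester's law of inertia: reduce each matrix to its signature normal form via the spectral theorem, and use the variational characterization of $N_+$ to show congruence preserves inertia.

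However, the paper does not prove this lemma at all. It is stated with the citation \cite{Roger1} (Horn and Johnson, \emph{Matrix Analysis}) and is simply quoted as a known result from the literature, to be applied later in Corollary~\ref{BKP1019} and Theorem~\ref{BKP1014}. So there is no ``paper's own proof'' to compare against; you have supplied a full proof where the authors were content to give a reference.
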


\begin{theorem}[\cite{Saravanan}]\label{mainthm}
Let $M_i$ be a complex matrix of order $n_i$, and let $u_i$ and $v_i$ be arbitrary complex vectors of size $n_i \times 1$ for $1 \le i \le k$. Let $\sigma_{ij}$ be arbitrary complex numbers for $1 \le i,j \le k$ and $i \ne j$. For each $1 \le i \le k$, let $\phi(\lambda, M_i)=\det(\lambda I_{n_i}-M_i)$ be the characteristic polynomial of the matrix $M_i$ and $\Gamma_i(\lambda) = \Gamma_{M_i}(u_i,v_i;\lambda) = v_i^t (\lambda I - M_i)^{-1} u_i$. 
Considering the $k$-tuple $\bold M=(M_1, M_2, \dots, M_k)$, 2$k$-tuple $\bold u=(u_1,v_1, u_2,v_2 \dots, u_k,v_k)$  and ${k(k-1)}$-tuple$ \\
 \sigma =(\sigma_{12}, \sigma_{13} \dots, \sigma_{1k},\sigma_{21}, \sigma_{23}, \dots, \sigma_{2k}, \dots, \sigma_{k1}, \sigma_{k2}, \dots,  \sigma_{k k-1})$ the following matrices are defined: $$B(\bold M, \bold u, \sigma) := \begin{bmatrix}
	
	M_1 & \sigma_{12} u_1v_2^t & \cdots  & \sigma_{1k} u_1v_k^t \\
	\sigma_{21} u_2v_1^t & M_2 & \cdots  & \sigma_{2k} u_2v_k^t \\
	\vdots & \vdots & \ddots  & \vdots \\
	\sigma_{k1} u_kv_1^t & \sigma_{k2} u_kv_2^t & \cdots  &M_k
\end{bmatrix}$$ $$\text{ and }
 \widetilde{B}(\bold M, \bold u, \sigma) :=  \begin{bmatrix}
\frac{1}{\Gamma_1(\lambda)} & -\sigma_{12}  & \cdots & -\sigma_{1,k}  \\
-\sigma_{21}  & \frac{1}{\Gamma_2(\lambda)} & \cdots & -\sigma_{2,k} \\
\vdots & \vdots & \ddots & \vdots  \\
-\sigma_{k1} & -\sigma_{k2} & \cdots &\frac{1}{\Gamma_{k}(\lambda)}
\end{bmatrix}.$$

Then the characteristic polynomial of $B(\bold M, \bold u, \sigma)$ is given as
\begin{equation}\label{maineqn}
\det(\lambda I - B(\bold M, \bold u, \sigma)) = \big( \Pi_{i=1}^k \phi(\lambda, M_i) \Gamma_i(\lambda) \big) \det(\widetilde{B}(\bold M, \bold u, \sigma)) .
\end{equation} 
\end{theorem}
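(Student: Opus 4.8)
The plan is to realize $\lambda I-B(\mathbf{M},\mathbf{u},\sigma)$ as a perturbation of an invertible block-diagonal matrix by a matrix of rank at most $k$, and then to apply the matrix determinant lemma (a one-line consequence of Lemma~\ref{BKP1008} applied to a suitable bordered matrix). Write $N=n_1+\cdots+n_k$ and let $D=\operatorname{diag}\big(\lambda I_{n_1}-M_1,\dots,\lambda I_{n_k}-M_k\big)$. Let $\mathcal U$ be the $N\times k$ matrix whose $i$-th column equals $u_i$ in the coordinates belonging to the $i$-th diagonal block and is $0$ elsewhere, and define $\mathcal V$ analogously from $v_1,\dots,v_k$; finally let $\Sigma=(s_{ij})\in\mathbb C^{k\times k}$ be given by $s_{ij}=\sigma_{ij}$ for $i\ne j$ and $s_{ii}=0$. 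A blockwise check shows that the $(i,j)$ block of $\mathcal U\Sigma\mathcal V^{t}$ is precisely $s_{ij}\,u_iv_j^{t}$, so that $\lambda I-B(\mathbf{M},\mathbf{u},\sigma)=D-\mathcal U\Sigma\mathcal V^{t}$: indeed this has diagonal blocks $\lambda I_{n_i}-M_i$ and off-diagonal blocks $-\sigma_{ij}u_iv_j^{t}$, as required.

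Next fix $\lambda$ not equal to any eigenvalue of $M_1,\dots,M_k$, so that $D$ is invertible with $\det D=\prod_{i=1}^k\phi(\lambda,M_i)$. The matrix determinant lemma then yields
\[
\det\big(\lambda I-B(\mathbf{M},\mathbf{u},\sigma)\big)=\det(D-\mathcal U\Sigma\mathcal V^{t})=\det(D)\,\det\big(I_k-\Sigma\,\mathcal V^{t}D^{-1}\mathcal U\big).
\]
Because $D^{-1}$ is block-diagonal with blocks $(\lambda I_{n_i}-M_i)^{-1}$, and $\mathcal U,\mathcal V$ have the block structure above, the $k\times k$ matrix $\mathcal V^{t}D^{-1}\mathcal U$ is diagonal, its $(i,i)$ entry being $v_i^{t}(\lambda I_{n_i}-M_i)^{-1}u_i=\Gamma_i(\lambda)$; that is, $\mathcal V^{t}D^{-1}\mathcal U=\operatorname{diag}\big(\Gamma_1(\lambda),\dots,\Gamma_k(\lambda)\big)$. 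Hence $I_k-\Sigma\,\mathcal V^{t}D^{-1}\mathcal U$ has $1$ on its diagonal and $-\sigma_{ij}\Gamma_j(\lambda)$ in position $(i,j)$ for $i\ne j$; factoring the scalar $\Gamma_j(\lambda)$ out of the $j$-th column for each $j$ writes it as $\widetilde B(\mathbf{M},\mathbf{u},\sigma)\cdot\operatorname{diag}\big(\Gamma_1(\lambda),\dots,\Gamma_k(\lambda)\big)$, whence $\det\big(I_k-\Sigma\,\mathcal V^{t}D^{-1}\mathcal U\big)=\big(\prod_{i=1}^k\Gamma_i(\lambda)\big)\det\widetilde B(\mathbf{M},\mathbf{u},\sigma)$. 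Substituting the values of $\det D$ and of this determinant into the display gives exactly \eqref{maineqn}.

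The chain of equalities above is valid a priori only for $\lambda$ outside the finitely many eigenvalues of $M_1,\dots,M_k$ (and the zeros of the $\Gamma_i$, where $\widetilde B$ is not defined), so the remaining point is to upgrade it to an identity valid for all $\lambda$. After clearing the formal factors $\Gamma_i$ and $1/\Gamma_i$, the right-hand side of \eqref{maineqn} equals $\det\widetilde P(\lambda)$, where $\widetilde P(\lambda)$ is the $k\times k$ \emph{polynomial} matrix with $(i,i)$ entry $\phi(\lambda,M_i)$ and $(i,j)$ entry $-\sigma_{ij}\,v_j^{t}\operatorname{adj}(\lambda I_{n_j}-M_j)\,u_j$ for $i\ne j$ (use $(\lambda I_{n_j}-M_j)^{-1}=\operatorname{adj}(\lambda I_{n_j}-M_j)/\phi(\lambda,M_j)$ and scale the $j$-th column of $I_k-\Sigma\operatorname{diag}(\Gamma_i(\lambda))$ by $\phi(\lambda,M_j)$). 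Since $\det\big(\lambda I-B(\mathbf{M},\mathbf{u},\sigma)\big)$ and $\det\widetilde P(\lambda)$ are polynomials in $\lambda$ agreeing on a cofinite set, they are equal as polynomials, which is the stated identity. The only real work is the bookkeeping in the first two paragraphs --- pinning down the factorization $\lambda I-B=D-\mathcal U\Sigma\mathcal V^{t}$ and the identity $\mathcal V^{t}D^{-1}\mathcal U=\operatorname{diag}(\Gamma_i(\lambda))$ --- together with the care needed in the polynomial-identity step; everything else is the matrix determinant lemma followed by a diagonal rescaling. An equivalent but longer route is to induct on $k$, stripping off the $k$-th block with the Schur-complement identity of Lemma~\ref{BKP1008} and finishing with the rank-one case of the matrix determinant lemma.
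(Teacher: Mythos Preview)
The theorem you are proving is quoted in the present paper from \cite{Saravanan} without proof, so there is no in-paper argument to compare against. Your proof is correct: the decomposition $\lambda I-B=D-\mathcal U\,\Sigma\,\mathcal V^{t}$ with $D$ block-diagonal and $\mathcal U,\mathcal V$ block-column matrices is exactly right, the Weinstein--Aronszajn / matrix determinant lemma applies cleanly, the computation $\mathcal V^{t}D^{-1}\mathcal U=\operatorname{diag}(\Gamma_i(\lambda))$ is accurate, and the column-rescaling that produces $\widetilde B$ is the obvious move. The polynomial-identity closure at the end is also sound.

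For context, the argument in \cite{Saravanan} proceeds instead by induction on $k$, peeling off the last block row and column via the Schur complement formula (essentially Lemma~\ref{BKP1008}) and reducing to the $(k-1)$-block case together with a rank-one perturbation; this is precisely the ``equivalent but longer route'' you sketch in your last sentence. Your global application of the matrix determinant lemma is more economical: it handles all $k$ rank-one off-diagonal pieces at once rather than one at a time, and it makes the appearance of the diagonal matrix $\operatorname{diag}(\Gamma_i(\lambda))$ transparent. The inductive approach, on the other hand, stays closer to the classical Fiedler lemma and requires no appeal to the general low-rank determinant identity. Both are standard; yours is shorter.
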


\begin{lemma}[\cite{Saravanan}]\label{evmain}
Let $M$ be a matrix of order $n$ with an eigenvector $u$ corresponding to the eigenvalue $\mu$ of $M$. Then $\Gamma_M(u, u;\lambda) = \dfrac{\Vert u \Vert^2}{\lambda-\mu}$.
\end{lemma}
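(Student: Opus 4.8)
The plan is to unwind the definition of $\Gamma_M(u,v;\lambda)$ introduced in Theorem~\ref{mainthm} and then exploit the eigenvector relation directly. Recall that $\Gamma_M(u,v;\lambda) = v^t(\lambda I - M)^{-1}u$, so taking $v = u$ we must evaluate $u^t(\lambda I - M)^{-1}u$. The expression $(\lambda I - M)^{-1}$ is meaningful precisely when $\lambda$ lies outside the spectrum of $M$, and in particular $\lambda \neq \mu$; this is the only regime in which the claimed identity needs to be proved, and it is exactly the regime where the division by $\lambda - \mu$ on the right-hand side is legitimate.

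The first step is to record that, since $u$ is an eigenvector of $M$ with eigenvalue $\mu$, we have $Mu = \mu u$, hence $(\lambda I - M)u = \lambda u - \mu u = (\lambda - \mu)u$. The second step is to invert this relation: because $\lambda - \mu \neq 0$, the vector $y := \tfrac{1}{\lambda - \mu}u$ satisfies $(\lambda I - M)y = u$, and since $\lambda I - M$ is invertible this forces $(\lambda I - M)^{-1}u = \tfrac{1}{\lambda - \mu}u$. The final step is to substitute this into $\Gamma_M(u,u;\lambda) = u^t(\lambda I - M)^{-1}u$, obtaining $\tfrac{1}{\lambda-\mu}\,u^t u = \dfrac{\Vert u \Vert^2}{\lambda - \mu}$, as asserted.

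There is essentially no genuine obstacle in this argument; it is a one-line computation. The only point worth a moment's care is the bookkeeping on the domain: the identity is an equality of rational functions in $\lambda$ valid wherever $(\lambda I - M)^{-1}$ exists, and the inversion step is justified only because $\lambda - \mu \neq 0$ there, which is automatic since $\mu$ is an eigenvalue of $M$.
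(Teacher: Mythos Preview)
Your argument is correct: the identity follows immediately from $(\lambda I - M)u = (\lambda-\mu)u$ and the invertibility of $\lambda I - M$ for $\lambda$ outside the spectrum, exactly as you wrote. The paper does not supply its own proof of this lemma---it merely quotes the result from \cite{Saravanan}---so there is nothing to compare against; your direct computation is the standard verification and entirely adequate.
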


\begin{lemma}[\cite{Wang1}]\label{BKP1012}
 Let $C_n$ be the cycle of length $n$. 
 \begin{enumerate}
    \item If $n=2t\,(t \geq 2)$, then $Spec(\varepsilon(C_{2t}))=\begin{Bmatrix}
   t & -t\\
   t & t
 \end{Bmatrix}$ 

\item If $n=2t+1\, (t \geq 1)$, then \\
$Spec(\varepsilon(C_{2t+1}))=
\begin{Bmatrix}
2t \cos(\frac{2\pi }{2t+1}) & 2t \cos(\frac{4 \pi }{2t+1}) & \ldots & 2t \cos({2\pi })\\
   1 & 1 & \ldots & 1
 \end{Bmatrix}$.
\end{enumerate}
\end{lemma}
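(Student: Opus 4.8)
\emph{Proof sketch.} The plan is to write $\varepsilon(C_n)$ down explicitly, recognise it as a scalar multiple of the adjacency matrix of a circulant graph, and then read off the spectrum from the standard formula for eigenvalues of circulants. To set up, fix the cyclic labelling $V(C_n)=\{v_0,v_1,\dots,v_{n-1}\}$, so that $d_{C_n}(v_i,v_j)=\min\{\,|i-j|,\,n-|i-j|\,\}$, and note that every vertex of $C_n$ has eccentricity $\lfloor n/2\rfloor$, which equals $t$ both for $n=2t$ and for $n=2t+1$. Hence $\min\{e_{C_n}(v_i),e_{C_n}(v_j)\}=t$ for all $i,j$, so by the definition of the eccentricity matrix the $(i,j)$-entry of $\varepsilon(C_n)$ is $t$ when $d_{C_n}(v_i,v_j)=t$ and $0$ otherwise. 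Therefore $\varepsilon(C_n)=t\,A(\Gamma_n)$, where $\Gamma_n$ is the circulant graph on $\mathbb{Z}_n$ with connection set $\{\,c:c\equiv\pm t\pmod n\,\}$.

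For $n=2t$ with $t\ge2$, this connection set reduces to the single element $t$, so $\Gamma_{2t}$ is the perfect matching joining each $v_i$ to its antipode $v_{i+t}$. Reordering the vertices so that antipodal pairs are consecutive turns $A(\Gamma_{2t})$ into $I_t\otimes\left[\begin{smallmatrix}0&1\\1&0\end{smallmatrix}\right]$, whose eigenvalues are $1$ and $-1$, each with multiplicity $t$ (e.g.\ by Lemma~\ref{BKP1002}, or directly). Multiplying through by $t$ gives $Spec(\varepsilon(C_{2t}))=\begin{Bmatrix} t & -t\\ t & t \end{Bmatrix}$.

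For $n=2t+1$ with $t\ge1$, the connection set is $\{t,t+1\}=\{\pm t\}$, so $\Gamma_{2t+1}$ is $2$-regular, and since $\gcd(t,2t+1)=1$ it is connected; indeed multiplication by $t^{-1}$ in $\mathbb{Z}_{2t+1}$ is a graph isomorphism sending $\{\pm t\}$ to $\{\pm1\}$, so $\Gamma_{2t+1}\cong C_{2t+1}$. The eigenvalues of the circulant with connection set $\{\pm t\}$ are $\omega^{jt}+\omega^{-jt}=2\cos\!\big(\tfrac{2\pi j t}{2t+1}\big)$ for $j=0,1,\dots,2t$, where $\omega=e^{2\pi i/(2t+1)}$; since $\gcd(t,2t+1)=1$ the map $j\mapsto jt\bmod(2t+1)$ permutes $\mathbb{Z}_{2t+1}$, so this multiset equals $\{\,2\cos(\tfrac{2\pi j}{2t+1}):j=1,\dots,2t+1\,\}$. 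Scaling by $t$ then yields exactly the stated spectrum of $\varepsilon(C_{2t+1})$.

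The only points needing a little care are the elementary facts that in a cycle every vertex has eccentricity $\lfloor n/2\rfloor$ and that a vertex has exactly one vertex at distance $t$ in $C_{2t}$ but exactly two in $C_{2t+1}$ — this is what pins down $\Gamma_n$ — together with the observation $\gcd(t,2t+1)=1$, which is what lets one identify $\Gamma_{2t+1}$ with $C_{2t+1}$ and rewrite the eigenvalues in the displayed trigonometric form. I do not expect any genuine obstacle; the argument should be short.
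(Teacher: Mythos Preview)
Your argument is correct. The identification $\varepsilon(C_n)=t\,A(\Gamma_n)$ with $\Gamma_n$ the circulant on $\mathbb{Z}_n$ with connection set $\{\pm t\}$ is exactly right, and the two cases are handled cleanly: the perfect-matching structure for $n=2t$ and the isomorphism $\Gamma_{2t+1}\cong C_{2t+1}$ via multiplication by $t^{-1}$ in $\mathbb{Z}_{2t+1}$, justified by $\gcd(t,2t+1)=1$, both go through without issue.

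There is nothing in the paper to compare against: Lemma~\ref{BKP1012} is stated without proof and attributed to \cite{Wang1}. So your write-up is not an alternative to the paper's proof but rather a self-contained verification of a quoted result. If anything, your circulant approach is the natural one and is essentially what one finds in the original source; the only cosmetic point is that in the odd case the displayed list $2t\cos(2\pi j/(2t+1))$, $j=1,\dots,2t+1$, contains repeated values (since $\cos$ is even), so the ``multiplicity $1$'' entries in the statement refer to positions in the list rather than to distinct eigenvalues---your multiset description already accounts for this.
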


\begin{theorem} [\cite{You}]\label{BKP1016}
Let  $M = (A_{ij})$  be a complex block matrix of order $n$ such that $A_{ij} = s_{ij}J_{n_i\times n_j}$ for $i\neq j$, and $A_{ii} =
s_{ii}J_{n_i\times n_i} + p_iI_{n_i}$. Then the equitable quotient matrix of $M$ is $Q = (q_{ij})$ with $q_{ij} = s_{ij}n_j$
if $i \neq j$, and $q_{ii} = s_{ii}n_i + p_i$. Moreover,
$Spec(M) = Spec(Q) \cup \{p_1^{[n_1 -1]}, \ldots,p_t^{[n_t - 1]}\}.$
\end{theorem}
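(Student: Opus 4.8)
The plan is to produce an explicit $M$-invariant direct-sum decomposition $\mathbb{C}^{n}=U\oplus V$ adapted to the block structure and then read off the characteristic polynomial of $M$ from the action on each summand; this is the familiar equitable-partition argument. Write $\mathbf{1}_{m}$ for the all-ones column vector of length $m$, so that $J_{n_{i}\times n_{j}}=\mathbf{1}_{n_{i}}\mathbf{1}_{n_{j}}^{t}$, and for each block index $i$ set $W_{i}=\{x\in\mathbb{C}^{n_{i}}:\mathbf{1}_{n_{i}}^{t}x=0\}$, a subspace of dimension $n_{i}-1$. The two elementary identities I would use repeatedly are $J_{n_{i}\times n_{j}}\mathbf{1}_{n_{j}}=n_{j}\mathbf{1}_{n_{i}}$ and $J_{n_{i}\times n_{j}}x=0$ whenever $x\in W_{j}$.

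First I would isolate the eigenvalues coming from inside the blocks. Let $U\subseteq\mathbb{C}^{n}=\bigoplus_{i}\mathbb{C}^{n_{i}}$ be the subspace of all vectors whose $i$-th block lies in $W_{i}$, so $\dim U=\sum_{i}(n_{i}-1)$. If $v$ is supported on a single block $i$ with block-entry $x\in W_{i}$, then for $j\ne i$ the $j$-th block of $Mv$ is $s_{ji}J_{n_{j}\times n_{i}}x=0$, while its $i$-th block is $(s_{ii}J_{n_{i}\times n_{i}}+p_{i}I_{n_{i}})x=p_{i}x$; hence $Mv=p_{i}v$. Thus $U$ is $M$-invariant and $M|_{U}$ is diagonal in the obvious basis, namely $M|_{U}=\bigoplus_{i}p_{i}I_{n_{i}-1}$, contributing the factor $\prod_{i}(\lambda-p_{i})^{n_{i}-1}$ to the characteristic polynomial. (When $M$ is real symmetric, as in our applications, $U$ is precisely the orthogonal complement of the span of the block all-ones vectors, so the decomposition below is orthogonal; this is convenient but not needed for the argument.)

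Next I would treat the complementary $k$-dimensional part. Let $e_{i}\in\mathbb{C}^{n}$ be the vector whose $i$-th block equals $\mathbf{1}_{n_{i}}$ and whose other blocks vanish, and set $V=\mathrm{span}\{e_{1},\dots,e_{k}\}$; then $\dim V=k$, $U\cap V=\{0\}$, and $U\oplus V=\mathbb{C}^{n}$. Using $J_{n_{j}\times n_{i}}\mathbf{1}_{n_{i}}=n_{i}\mathbf{1}_{n_{j}}$, the $j$-th block of $Me_{i}$ is $s_{ji}n_{i}\mathbf{1}_{n_{j}}$ for $j\ne i$ and $(s_{ii}n_{i}+p_{i})\mathbf{1}_{n_{i}}$ for $j=i$; that is, $Me_{i}=\sum_{j}q_{ji}e_{j}$ with the $q_{ji}$ exactly the entries defined in the statement. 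Hence $V$ is $M$-invariant, and in the basis $(e_{1},\dots,e_{k})$ the operator $M|_{V}$ is represented by the quotient matrix $Q$ (or its transpose, depending on the row/column convention, which does not affect the spectrum); therefore $\phi(\lambda,M|_{V})=\phi(\lambda,Q)$. Since $U$ and $V$ are complementary $M$-invariant subspaces, $\phi(\lambda,M)=\phi(\lambda,Q)\prod_{i}(\lambda-p_{i})^{n_{i}-1}$, which is exactly the asserted identity $Spec(M)=Spec(Q)\cup\{p_{1}^{[n_{1}-1]},\dots,p_{k}^{[n_{k}-1]}\}$ read as an equality of multisets.

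There is no genuinely hard step here; the points requiring care are purely bookkeeping ones. One must state explicitly that the displayed union of spectra is a union of multisets, because the $p_{i}$ need not be distinct from one another or from eigenvalues of $Q$ — indeed the Perron-type eigenvalue of $Q$ typically lives in $V$ and is not among the $p_{i}$, yet it can numerically coincide with some $p_{i}$. One also has to pin down the index convention so that the reduced action on $V$ is governed by $Q$ (up to transpose) rather than by some permuted matrix; the choice $q_{ij}=s_{ij}n_{j}$ for $i\ne j$ in the statement is precisely what makes $Me_{i}=\sum_{j}q_{ji}e_{j}$ work out, so no relabelling is needed. Finally, one should note that symmetry of $M$ is nowhere used: the splitting $\mathbb{C}^{n}=U\oplus V$ into invariant subspaces, and hence the factorization of $\phi(\lambda,M)$, is valid for an arbitrary complex block matrix of the stated form.
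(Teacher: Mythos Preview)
Your argument is correct. Note, however, that the paper does not supply its own proof of this theorem: it is quoted verbatim as a known result from \cite{You} and used as a black box in Corollary~\ref{BKP1019} and Theorem~\ref{BKP1014}. So there is no in-paper proof to compare against.

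That said, what you have written is exactly the standard equitable-partition argument underlying the cited result: split $\mathbb{C}^{n}$ into the span $V$ of the block all-ones vectors and its complement $U=\bigoplus_{i}W_{i}$, verify $M$-invariance of both pieces, and read off $Q$ on $V$ and the scalars $p_{i}$ on $U$. Your computations $J_{n_{j}\times n_{i}}x=0$ for $x\in W_{i}$ and $J_{n_{j}\times n_{i}}\mathbf{1}_{n_{i}}=n_{i}\mathbf{1}_{n_{j}}$ are exactly what drives the proof in \cite{You}, and your remark that $Me_{i}=\sum_{j}q_{ji}e_{j}$ (so that the matrix of $M|_{V}$ in the basis $(e_{1},\dots,e_{k})$ is $Q$, not merely similar to $Q$) pins down the convention correctly. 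The caveats you flag --- that the spectral union must be read as multisets, and that symmetry of $M$ is irrelevant --- are appropriate and worth stating.
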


\noindent The paper is organized as follows. 

In Section 2, we determine the eccentricity matrix of $H$-join of graphs. As a result, we find the inertia of $H[G_1,G_2,\ldots,G_k]$ if $rad(H) \geq 3$.

In Section 3, we find the characteristic polynomial, spectrum, energy and inertia of the eccentricity matrix of the following graphs. \\
(i) Lexicographic product of $H$ and $G$ in terms of $H$ if $rad(H)\geq 3$; \\
(ii) $H$-join of $G_1,G_2,\ldots, G_k$  if $rad(H)\geq 2$ and $G_i$ is complete whenever $e_H(i)=2$. As a result of (ii), we deduce some of the results in \cite{Mahato1} and \cite{Wang1}.\\
(iii) $K_k[G_1,G_2,\ldots, G_k]$ if $\Delta(G_i)\leq |V(G_i)|-2$, which generalises a result in \cite{Mahato1}. As a result of (iii), we deduce some of the results in  \cite{Mahato1}. 

In the last section, 
we find the characteristic polynomial of $\varepsilon(K_{1,k}[G_0,G_1,\ldots, G_k])$ if $G_i$'s are regular. As a result, we deduce some of the results in \cite{Mahato1}, \cite{Patel} and \cite{Wang}. 


\section{Eccentricity matrix of $H$-join of graphs}
In this section, we begin with the following lemma. 
\begin{lemma}\label{BKP1004}
Let $H$ be a connected graph with $V(H)=\{1,2,\ldots,k\}$. Let $G_1,G_2,\ldots,G_k$ be a family of graphs with $|V(G_i)|=n_i$, for $i=1,2,\ldots,k$ and $G=H[G_1,G_2,\ldots,G_k]$. Then 
\begin{enumerate}[label = (\roman*)]
\item If $i,j \in V(H)$ with $i \neq j$ and $x,y \in V(G)$ with $x\neq y$, then \[d_G(x,y)= \begin{cases}
\text{$2$} &\quad\text{if $x,y \in V(G_i)\, \text{ with } xy \notin E(G_i)$}\\
\text{$d_H(i,j)$} &\quad\text{if $x \in V(G_i) \text{ and } y \in V(G_j)$.}  \end{cases}\]
\item If $i\in V(H)$, then $e_H(i) \leq e_G(x)$, for all $x \in V(G_i)$.

\item If $e_H(i) \geq 2$, then  
$e_H(i)=e_G(x)$, for all $x \in V(G_i)$.

\item  If $e_H(i) = 1$ and $x \in V(G_i)$, then   
\[e_G(x) = \begin{cases}
\text{$1$} &\quad\text{if $deg_{G_i}(x)= n_i -1,$} \\ \text{$2$} &\quad\text{otherwise.} \end{cases}\]
\end{enumerate}
\end{lemma}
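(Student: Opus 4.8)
The plan is to prove (i) first and then derive (ii)--(iv) from it by short case analyses over the location of the relevant vertex. Throughout we may assume $k \ge 2$: if $k=1$ then $H=K_1$, $G=G_1$, and every assertion is either vacuous or reduces to $e_H(1)=0\le e_G(x)$. With $k\ge 2$ and $H$ connected, each block $G_j$ is joined completely to every block $G_\ell$ with $j\ell\in E(H)$, which already forces $G$ to be connected and $e_H(i)\ge 1$ for all $i$.

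For (i), I would treat the two cases separately. If $x,y\in V(G_i)$ with $xy\notin E(G_i)$, then $x,y$ are non-adjacent in $G$ (edges inside a block are exactly those of $G_i$), so $d_G(x,y)\ge 2$; picking a neighbour $j$ of $i$ in $H$ and any $z\in V(G_j)$, the walk $x\,z\,y$ gives $d_G(x,y)\le 2$, hence equality. If $x\in V(G_i)$ and $y\in V(G_j)$ with $i\ne j$, the upper bound $d_G(x,y)\le d_H(i,j)$ comes from lifting a shortest $i$--$j$ path $i=i_0,i_1,\dots,i_m=j$ in $H$ to a walk $x=z_0,z_1,\dots,z_m=y$ in $G$ with $z_\ell\in V(G_{i_\ell})$, which is a walk since consecutive blocks are completely joined. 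For the lower bound, take a shortest $x$--$y$ path in $G$ and read off the block index of each of its vertices; every edge of this path either stays within a block or passes between two blocks whose indices are adjacent in $H$, so deleting consecutive repetitions from the index sequence yields an $i$--$j$ walk in $H$ of length at most $d_G(x,y)$, whence $d_H(i,j)\le d_G(x,y)$.

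Parts (ii)--(iv) are then bookkeeping. For (ii), choose $j$ with $d_H(i,j)=e_H(i)$, which we may take with $j\ne i$, and any $y\in V(G_j)$; then (i) gives $e_G(x)\ge d_G(x,y)=e_H(i)$. For (iii), add the reverse inequality: for $y\ne x$, either $y\in V(G_i)$, where $d_G(x,y)\le 2\le e_H(i)$ by (i) and the hypothesis, or $y\in V(G_j)$ with $j\ne i$, where $d_G(x,y)=d_H(i,j)\le e_H(i)$; so $e_G(x)\le e_H(i)$, giving equality. For (iv), $e_H(i)=1$ means $i$ is adjacent in $H$ to every other vertex; if $\deg_{G_i}(x)=n_i-1$ then $x$ is adjacent in $G$ to all of $V(G_i)\setminus\{x\}$ and, via the edges $ij\in E(H)$, to every vertex outside $G_i$, so $e_G(x)=1$; if $\deg_{G_i}(x)<n_i-1$, then $x$ has a non-neighbour in $G_i$, which sits at distance $2$ by (i), while every vertex outside $G_i$ is at distance $1$ and every vertex of $G_i$ at distance $\le 2$, so $e_G(x)=2$.

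The only mildly delicate step, and the one I would spell out carefully, is the lower bound in the second case of (i): one must verify that a path in $G$ can move from a vertex of $G_a$ to a vertex of $G_b$ with $a\ne b$ only when $ab\in E(H)$ (immediate from the definition of the $H$-join), and that the induced index sequence, after collapsing repetitions, is a bona fide $i$--$j$ walk in $H$ no longer than the original path. Everything else is routine, and a short remark that $G$ is connected and $e_H(i)\ge1$ when $k\ge2$ handles the edge cases.
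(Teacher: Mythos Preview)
Your proposal is correct and follows essentially the same line as the paper: part (i) is established directly from the definition of the $H$-join, and parts (ii)--(iv) are then read off from (i) by the obvious case analyses. The paper's proof is in fact much terser---it simply asserts that (i) follows from the definition and that (iii) and (iv) follow from (i) and (ii)---so your version supplies details (notably the lifting/projection argument for the lower bound $d_H(i,j)\le d_G(x,y)$ and the treatment of the $k=1$ edge case) that the paper leaves implicit.
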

\begin{proof}
\textbf{Proof of (i)} follows from the definition of $H$-join of graphs.

\noindent \textbf{Proof of (ii)}. 
For $i\in V(H)$, there exists $j\in V(H)\backslash \{i\}$ such that $e_H(i)=d_H(i,j)$. By (i),  $d_H(i,j) = d_G(x,y)$, for all $x \in V(G_i)$ and $y \in V(G_j)$. Hence $e_H(i) =d_H(i,j) = d_G(x,y)\leq e_G(x)$, for all $x \in V(G_i)$.\\
\noindent \textbf{Proofs of (iii) and (iv)} follow from (i) and (ii). 
\end{proof}
Using Lemma \ref{BKP1004}, we prove the following result. 

\begin{theorem}\label{BKP1001}
Let $G_1,G_2,\ldots,G_k$ be a family of graphs with $|V(G_i)| = n_i$ and $k\geq 2$, $U_{i_1}=\{x \in V(G_i) : deg_{G_i}(x) = n_i-1\}$, $U_{i_2}= V(G_i) \setminus U_{i_1}$ and $|U_{i_t}| = m_{i_t}$, for $t =1, 2$, where $i \in \{1,2,\ldots,k\}$. Let $H$ be a connected graph with $V(H)=\{1,2,\ldots,k\}$ and $G=H[G_1,G_2,\ldots,G_k]$. If  
$\varepsilon(H)= (a_{ij})$,
then
$\varepsilon(G) = (A_{ij})$
is the block matrix with the non-diagonal block matrix $A_{ij}$ ($i\neq j$) of $\varepsilon(G)$ is  
 \[A_{ij} = \begin{cases} \text{$a_{ij}J_{n_i\times n_j}$} &\quad\text{if $a_{ij} \geq 2$}\vspace{0.2 cm}\\ \text{$\begin{bmatrix}
J_{m_{i_1}\times m_{j_1}} & J_{m_{i_1}\times m_{j_2}}\\
J_{m_{i_2}\times m_{j_1}}& \textbf{0}_{m_{i_2}\times m_{j_2}}
\end{bmatrix}$}  &\quad\text{if $a_{ij}=1 \text{ and } e_H(i) =1 = e_H(j)$ }\vspace{0.2 cm}\\ 
\text{$\begin{bmatrix}
J_{m_{i_1}\times m_{j_1}} & J_{m_{i_1}\times m_{j_2}}\\
\textbf{0}_{m_{i_2}\times m_{j_1}}& \textbf{0}_{m_{i_2}\times m_{j_2}}
\end{bmatrix}$}  &\quad\text{if $a_{ij}=1, e_H(i) =1  \text{ and }  e_H(j) \neq 1$ }\vspace{0.2 cm}\\ 
\text{$\begin{bmatrix}
J_{m_{i_1}\times m_{j_1}} & \textbf{0}_{m_{i_1}\times m_{j_2}}\\
J_{m_{i_2}\times m_{j_1}}& \textbf{0}_{m_{i_2}\times m_{j_2}}
\end{bmatrix}$}  &\quad\text{if $a_{ij}=1,  e_H(i) \neq 1 \text{ and }  e_H(j) =1$}\vspace{0.2 cm}\\ 
\text{$\textbf{0}_{n_{i}\times n_{j}}$} &\quad\text{if $a_{ij}=0$}\\  \end{cases} \] and the diagonal block matrix $A_{ij}$ ($i= j$) of $\varepsilon(G)$ is  
\[A_{ii} =\begin{cases}
\text{$\begin{bmatrix}
{(J-I)}_{m_{i_1}\times m_{i_1}} & J_{m_{i_1}\times m_{i_2}}\\
J_{m_{i_2}\times m_{i_1}}& {2A(\overline{\langle U_{i_2}\rangle})}_{m_{i_2}\times m_{i_2}}
\end{bmatrix}$}  &\quad\text{if $e_H(i)=1$ }\vspace{0.2cm}\\ 
\text{$\begin{bmatrix}
\textbf{0}_{m_{i_1}\times m_{i_1}} & \textbf{0}_{m_{i_1}\times m_{i_2}}\\
\textbf{0}_{m_{i_2}\times m_{i_1}}& {2A(\overline{\langle U_{i_2}\rangle})}_{m_{i_2}\times m_{i_2}}
\end{bmatrix}$}  &\quad\text{if $e_H(i)=2$}\vspace{0.2cm}\\ 
\text{$\textbf{0}_{n_{i}\times n_{i}}$} &\quad\text{if $e_H(i) \geq 3$,}\\  \end{cases} \]
where $J_{n \times m}$ is the matrix whose entries are all $1$, $I_{n \times n}$ is an identity matrix and $A(\overline{\langle U_{i_2}\rangle})$ is the adjacency matrix of $\overline{\langle U_{i_2}\rangle}$.
\end{theorem}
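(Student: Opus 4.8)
The plan is to compute $\varepsilon(G)$ entry by entry, organizing the work according to which vertices the entries connect, and reading off the eccentricities from Lemma~\ref{BKP1004}. First I would fix, for each $i\in V(H)$, the partition $V(G_i)=U_{i_1}\cup U_{i_2}$ into full-degree and non-full-degree vertices, so that each block $A_{ij}$ is further partitioned into four sub-blocks according to whether the endpoints lie in the $U_{\cdot_1}$ or $U_{\cdot_2}$ parts; this is exactly the $2\times 2$ block structure appearing in the statement. The key observation is that by Lemma~\ref{BKP1004}(ii)--(iv), the eccentricity $e_G(x)$ of a vertex $x\in V(G_i)$ depends only on $i$ and on whether $x\in U_{i_1}$ or $x\in U_{i_2}$: if $e_H(i)\ge 2$ then $e_G(x)=e_H(i)$ for all $x\in V(G_i)$; if $e_H(i)=1$ then $e_G(x)=1$ for $x\in U_{i_1}$ and $e_G(x)=2$ for $x\in U_{i_2}$. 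Once the eccentricities are known, every entry of $\varepsilon(G)$ is determined by the rule $a_{xy}=d_G(x,y)$ if $d_G(x,y)=\min\{e_G(x),e_G(y)\}$ and $0$ otherwise, and the distances $d_G(x,y)$ are given explicitly by Lemma~\ref{BKP1004}(i).

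Next I would go through the cases. For the off-diagonal blocks $A_{ij}$ with $i\ne j$: here $d_G(x,y)=d_H(i,j)=a_{ij}$ when $\varepsilon(H)$ records $a_{ij}\neq 0$ (and I should note $a_{ij}=0$ does not immediately mean $d_H(i,j)$ is irrelevant — rather $a_{ij}=0$ means $d_H(i,j)\neq\min\{e_H(i),e_H(j)\}$). When $a_{ij}\ge 2$: then $d_H(i,j)=\min\{e_H(i),e_H(j)\}\ge 2$, so by Lemma~\ref{BKP1004}(iii) $\min\{e_G(x),e_G(y)\}=\min\{e_H(i),e_H(j)\}=d_H(i,j)=d_G(x,y)$ for every $x\in V(G_i),y\in V(G_j)$, giving $A_{ij}=a_{ij}J_{n_i\times n_j}$. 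When $a_{ij}=1$: then $d_H(i,j)=1$ and $\min\{e_H(i),e_H(j)\}=1$, so at least one of $e_H(i),e_H(j)$ equals $1$; here $d_G(x,y)=1$, and this equals $\min\{e_G(x),e_G(y)\}$ iff both $e_G(x)=1$ and $e_G(y)=1$, i.e. iff $x\in U_{i_1}$ (when $e_H(i)=1$) or automatically when $e_H(i)\ge 2$ is impossible — carefully, $e_G(x)=1$ forces $e_H(i)=1$ and $x\in U_{i_1}$. The three sub-cases ($e_H(i)=e_H(j)=1$; $e_H(i)=1\ne e_H(j)$; $e_H(i)\ne 1=e_H(j)$) then produce exactly the three displayed $2\times2$ block patterns, since the $(1,1)$ sub-block survives precisely when both endpoints have eccentricity $1$. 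The case $a_{ij}=0$ gives the zero block directly from the definition.

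For the diagonal blocks $A_{ii}$: by Lemma~\ref{BKP1004}(i), $d_G(x,y)=2$ for $x\ne y$ in $V(G_i)$ with $xy\notin E(G_i)$, and $d_G(x,y)=1$ if $xy\in E(G_i)$; also the diagonal entries are $0$. If $e_H(i)\ge 3$, then every $x\in V(G_i)$ has $e_G(x)=e_H(i)\ge 3>2\ge d_G(x,y)$, so no entry in $A_{ii}$ can be nonzero, giving the zero block. If $e_H(i)=2$, then $e_G(x)=2$ for all $x\in V(G_i)$, so an entry $a_{xy}$ is nonzero only when $d_G(x,y)=2$, i.e. when $x\ne y$ and $xy\notin E(G_i)$ — that is exactly $2A(\overline{\langle U_{i_2}\rangle})$ in the $U_{i_2}\times U_{i_2}$ sub-block, while any vertex in $U_{i_1}$ is adjacent to all others, so contributes nothing; hence the second displayed form. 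If $e_H(i)=1$, then $x\in U_{i_1}$ has $e_G(x)=1$ and $x\in U_{i_2}$ has $e_G(x)=2$: within $U_{i_1}$, all pairs are adjacent so $d_G=1=\min$, giving $(J-I)_{m_{i_1}}$; between $U_{i_1}$ and $U_{i_2}$, every such pair is adjacent (full-degree), so $d_G=1$ and $\min\{1,2\}=1$, giving $J$; within $U_{i_2}$, adjacent pairs give $d_G=1\ne 2=\min$ hence $0$, while non-adjacent pairs give $d_G=2=\min$ hence $2$, i.e. $2A(\overline{\langle U_{i_2}\rangle})$ — matching the first displayed form.

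The calculations themselves are routine; the main obstacle is bookkeeping — making sure that in each sub-block one correctly compares $d_G(x,y)$ with $\min\{e_G(x),e_G(y)\}$, especially the subtle point that an entry is nonzero iff the distance equals that minimum, so a pair with one full-degree endpoint (eccentricity $1$) and one non-full-degree endpoint (eccentricity $2$) still yields a nonzero entry $1$ because $1=\min\{1,2\}$ and they are adjacent. I would present the argument as a clean case split on the value of $e_H(i)$ (resp. $e_H(j)$) and of $a_{ij}$, invoking Lemma~\ref{BKP1004} at each step, and would double-check the boundary case where some $U_{i_2}$ or $U_{i_1}$ is empty (the corresponding sub-blocks simply vanish, consistent with the stated matrices).
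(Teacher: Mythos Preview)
Your approach is essentially identical to the paper's: both proceed by case analysis on $a_{ij}$ for the off-diagonal blocks and on $e_H(i)$ for the diagonal blocks, using Lemma~\ref{BKP1004} to read off the eccentricities in $G$ and then comparing $d_G(x,y)$ with $\min\{e_G(x),e_G(y)\}$ in each sub-block.

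There is, however, a slip in your treatment of the case $a_{ij}=1$. You write that $d_G(x,y)=1$ equals $\min\{e_G(x),e_G(y)\}$ ``iff both $e_G(x)=1$ and $e_G(y)=1$'', and later that ``the $(1,1)$ sub-block survives precisely when both endpoints have eccentricity $1$''. This is backwards: $\min\{e_G(x),e_G(y)\}=1$ holds iff \emph{at least one} of $e_G(x),e_G(y)$ equals $1$. With your ``both'' criterion, Subcase $e_H(i)=e_H(j)=1$ would give only a nonzero $(U_{i_1},U_{j_1})$ sub-block (not the full three-out-of-four pattern in the statement), and Subcase $e_H(i)=1\ne e_H(j)$ would give the zero block (since then $e_G(y)\ge 2$ for every $y$). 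You clearly know the correct criterion, since you invoke it verbatim in your diagonal-block discussion (``a pair with one full-degree endpoint \ldots\ still yields a nonzero entry $1$ because $1=\min\{1,2\}$''); just carry that same reasoning into the off-diagonal analysis. Also, your ``$a_{ij}=0$ gives the zero block directly from the definition'' is too quick: you need the observation (as in the paper) that $a_{ij}=0$ with $i\ne j$ forces $d_H(i,j)<\min\{e_H(i),e_H(j)\}$, hence both $e_H(i),e_H(j)\ge 2$, so that Lemma~\ref{BKP1004}(iii) applies and the eccentricities transfer.
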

\begin{proof}
Note that, if $U_{i_2}\neq \emptyset$, then $|U_{i_2}|\geq 2$.  For $i,j\in V(H)$. Let us first consider $i \neq j$.

\noindent{\textbf{Case 1.} $a_{ij} \geq 2$}.

Clearly, $d_H(i,j)= min\{e_H(i),e_H(j)\} \geq 2$. Then $e_H(i) \geq 2$ and $e_H(j) \geq 2$. By Lemma \ref{BKP1004}, $d_H(i,j) = d_G(x,y)$, $e_H(i) = e_G(x)$ and $e_H(j)=e_G(y)$, for all $x \in V(G_i)$ and $y \in V(G_j)$ and hence $d_H(i,j)=d_G(x,y)=min\{e_G(x),e_G(y)\}$. So, the block matrix $A_{ij}$ in $Ecc(G)$ is  $a_{ij}J_{n_i \times n_j}$.\\
\noindent{\textbf{ Case 2.}} $a_{ij}=1$.

Clearly, by the definition $a_{ij}$, $i$ is adjacent to $j$ and $e_H(i)=1$ or $e_H(j)=1$. 

\noindent{\textbf{Subcase 2.1.}} $e_H(i)=e_H(j) (=1)$.

For $x \in U_{i_2}$,  there exists $z\in V(G_i)$ such that $d_G(x,z)= 2$ and hence  $e_G(x)\geq 2$. Similarly, $e_G(y) \geq 2$, for $y\in U_{j_2}$. But $d_G(x,y)=d_G(i,j) = 1$. Hence $ A_{ij}(x,y) = 0$, for all $x \in  U_{i_2}$ and $y \in U_{j_2}$.

For $x \in U_{i_1}$ and $y \in V(G_j)$, $e_G(x)=1$ (by Lemma \ref{BKP1004}) 
and
 $d_G(x,y) = 1 = min\{e_G(x), e_G(y)\}$. Therefore $A_{ij}(x,y) = 1$, for all $x \in  U_{i_1}$ and $y \in V(G_j)$.
 
By a similar argument, we see that  $A_{ij}(x,y) = 1$, for all $x \in V(G_i)$ and $y \in U_{j_1}$.
Hence \[ A_{ij}(x,y) = \begin{cases}
\text{$0$} &\quad\text{if $x \in U_{i_2} \text{ and } y \in U_{j_2}$}\\
\text{$1$} &\quad\text{otherwise.}
\end{cases} \]
Thus \[A_{ij}=\text{$\begin{bmatrix}
J_{m_{i_1}\times m_{j_1}} & J_{m_{i_1}\times m_{j_2}}\\
J_{m_{i_2}\times m_{j_1}}& 0_{m_{i_2}\times m_{j_2}}
\end{bmatrix}$}_{n_i \times n_j} \]
\noindent{\textbf{Subcase 2.2.}} $e_H(i)=1$ and $e_H(j) \neq 1$.

If $x \in U_{i_1}$ and $y \in V(G_j)$, then by Lemma \ref{BKP1004}, $e_G(x)=e_H(i)=1$ and $d_G(x,y)=1$ and therefore $ A_{ij}(x,y) = 1=min\{e_G(x),e_G(y)\}$, for $x \in U_{i_1}$ and $ y \in V(G_j)$.

If $x \in U_{i_2}$, $y \in V(G_j)$, then $e_G(x) = 2 $ and $e_G(y) = e_H(j)$ (as $e_H(j) \geq 2$), but $d_G(x,y)=1$, we have $A_{ij}(x,y)=0$, for $x \in U_{i_2}$ and $y\in V(G_j)$. Therefore, 
\[ A_{ij}(x,y) = \begin{cases}
\text{$1$} &\quad\text{if $x \in U_{i_1}\text{ and } y\in V(G_j),$}\\
\text{$0$} &\quad\text{otherwise.}
\end{cases} \]
Thus \[A_{ij}=\text{$\begin{bmatrix}
J_{m_{i_1}\times m_{j_1}} & J_{m_{i_1}\times m_{j_2}}\\
\textbf{0}_{m_{i_2}\times m_{j_1}}& \textbf{0}_{m_{i_2}\times m_{j_2}}
\end{bmatrix}$}_{n_i \times n_j} \]
\noindent{\textbf{Subcase 2.3.}}  $e_H(i) \neq 1$ and $e_H(j) = 1$.

By the similar proof in subcase $2.2$, we see that
\[ A_{ij}(x,y) = \begin{cases}
\text{$1$} &\quad\text{if $ x\in V(G_i) \text{ and }y \in U_{j_1},$}\\
\text{$0$} &\quad\text{otherwise.}
\end{cases} \]
and hence \[A_{ij}=\text{$\begin{bmatrix}
J_{m_{i_1}\times m_{j_1}} & \textbf{0}_{m_{i_1}\times m_{j_2}}\\
J_{m_{i_2}\times m_{j_1}}& \textbf{0}_{m_{i_2}\times m_{j_2}}
\end{bmatrix}$}_{n_i \times n_j}. \]
\noindent{\textbf{Case 3.}} $a_{ij} = 0$.

By the definition of $a_{ij}$, $d_H(i,j) < \min\{e_H(i),e_H(j)\}$, which implies that $e_H(i) \geq 2$ and $e_H(j) \geq 2$. By Lemma \ref{BKP1004}, $e_H(i)=e_G(x)$, $e_H(j)=e_G(y)$ for all $x\in V(G_i)$, $ y \in V(G_j)$ and $d_H(i,j)=d_G(x,y)$. Therefore, $A_{ij}(x,y)=0$, for all $x\in V(G_i)$, $y \in V(G_j)$ and hence $A_{ij} = \textbf{0}_{n_i \times n_j}$.\\
Therefore, we have obtained the block matrices $A_{ij}$'s of $\varepsilon(G)$ when $i\neq j$. 

Next let us consider $i=j$.
Here, let us first assume that  
$e_H(i) = 1$.

When $x \in U_{i_1}$ and $y \in V(G_i)$ with $x \neq y$. Then $e_G(x)= e_H(i)= 1$ 
and $d_G(x,y)=1$. Thus, $A_{ii}(x,y)=1$, for all $x \in U_{i_1},\ y \in V(G_i)$ and $x \neq y$.

When $x,y \in U_{i_2}$. Then $e_G(x)=e_G(y) = 2$. If $xy \in E(G_i)$, then $A_{ii}(x,y)=0$. 
If $xy \not \in E(G_i)$, then $d_G(x,y)=2$ and hence $A_{ii}(x,y) = 2$. 
So, we have \[ A_{ii}(x,y) 
 = \begin{cases}
 \text{$1$} &\quad\text{if $x \in U_{i_1} \text{ and } y \in V(G_i)$}\\ \text{$0$} &\quad\text{if $x,y \in U_{i_2} \text{ and } xy \in E(G_i)$}\\ \text{$2$} &\quad\text{if $x,y \in U_{i_2} \text{ and } xy \not \in E(G_i)$}\end{cases} \]
 and
thus \[ A_{ii}=  \text{$\begin{bmatrix}
{(J-I)}_{m_{i_1}\times m_{i_1}} & J_{m_{i_1}\times m_{i_2}}\\
J_{m_{i_2}\times m_{i_1}}& {2A(\overline{\langle U_{i_2}\rangle})}_{m_{i_2}\times m_{i_2}}
\end{bmatrix}$}_{n_i \times n_i}. \] 

Next, let us assume that $e_H(i) \geq 3$. As $d_G(x,y)\leq 2$, for $x,y \in V(G_i)$, we have  $A_{ii}=\textbf{0}_{n_i \times n_i}.$

Finally, let us assume that $e_H(i) = 2$. Then $e_G(x) = 2$, for all $x \in V(G_i)$(by Lemma \ref{BKP1004}). For $x,y\in V(G_i)$,
\[d_G(x,y) = \begin{cases}
 \text{$2$} &\quad\text{if $x,y \in U_{i_2} \text{ and } xy \not \in E(G_i)$}\\
\text{$1$} &\quad\text{otherwise,} 
\end{cases} \]

we have  
 \[A_{ii}= \text{$\begin{bmatrix}
\textbf{0}_{m_{i_1}\times m_{i_1}} & \textbf{0}_{m_{i_1}\times m_{i_2}}\\
\textbf{0}_{m_{i_2}\times m_{i_1}}& {2A(\overline{\langle U_{i_2}\rangle})}_{m_{i_2}\times m_{i_2}}
\end{bmatrix}$}_{n_i \times n_i}. \]
Hence the proof.
\end{proof}
Using Theorem \ref{BKP1001}, we obtain the inertia of $H$-join of graphs as follows. 

\begin{corollary}\label{BKP1019}
Let $H$ be a connected graph with $V(H) = \{1,2,\ldots,k\}$, $\varepsilon(H)=(a_{ij})$ and $rad(H) \geq 3$. Let $G_1, G_2,\ldots, G_k$ be a family of graphs with $ |V(G_i)| = n_i$, $n = \sum_{i=1}^kn_i$ and $G=H[G_1, G_2,\ldots, G_k]$. Then $\phi(\lambda,\, \varepsilon(G)) = \lambda^{n-k}\, \det(\lambda I  - Q)$, where\\    
\[ Q = (q_{ij}) = \begin{cases}
\text{$0$} &\quad\text{if $i=j$}\\
\text{$a_{ij}n_j $} &\quad\text{otherwise},
\end{cases} \]
$ N_+(\varepsilon(G)) =  N_+(\varepsilon(H))$;
$ N_0(\varepsilon(G)) = n-k +  N_0(\varepsilon(H))$ and 
$ N_-(\varepsilon(G)) =   N_-(\varepsilon(H))$.
\end{corollary}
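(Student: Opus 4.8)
The plan is to apply Theorem \ref{BKP1001} with $rad(H)\ge 3$, which forces $e_H(i)\ge 3$ for every $i\in V(H)$. Under this hypothesis, Theorem \ref{BKP1001} tells us that every diagonal block $A_{ii}$ of $\varepsilon(G)$ is the zero block $\mathbf 0_{n_i\times n_i}$, and every off-diagonal block is $A_{ij}=a_{ij}J_{n_i\times n_j}$ (since whenever $a_{ij}=1$ we would need $e_H(i)=1$ or $e_H(j)=1$, which cannot happen here; so the only surviving cases are $a_{ij}\ge 2$ and $a_{ij}=0$, both of which give $a_{ij}J_{n_i\times n_j}$). Hence $\varepsilon(G)$ has exactly the block form covered by Theorem \ref{BKP1016}: with $s_{ij}=a_{ij}$ for $i\ne j$, $s_{ii}=0$ and $p_i=0$ for all $i$.

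First I would invoke Theorem \ref{BKP1016} directly. Its equitable quotient matrix is $Q=(q_{ij})$ with $q_{ij}=a_{ij}n_j$ for $i\ne j$ and $q_{ii}=0$, which is exactly the matrix $Q$ in the statement, and the theorem gives $Spec(\varepsilon(G))=Spec(Q)\cup\{0^{[n_1-1]},\dots,0^{[n_k-1]}\}$. Translating this into the characteristic polynomial yields $\phi(\lambda,\varepsilon(G))=\lambda^{\,n-k}\det(\lambda I-Q)$, since $\sum_{i=1}^k(n_i-1)=n-k$. That settles the first assertion.

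For the inertia, the key observation is that $Q$ is \emph{similar} to a symmetric matrix, so its eigenvalues are real and we can count signs. Let $D=\operatorname{diag}(\sqrt{n_1},\dots,\sqrt{n_k})$; then $D^{-1}QD$ has $(i,j)$-entry $a_{ij}\sqrt{n_in_j}$ for $i\ne j$ and $0$ on the diagonal, i.e. $D^{-1}QD = D'\,\varepsilon(H)\,D'$ with $D'=\operatorname{diag}(\sqrt{n_1},\dots,\sqrt{n_k})$ — a symmetric matrix congruent to $\varepsilon(H)$ via the nonsingular $D'$. By Lemma \ref{BKP1017}, $D'\varepsilon(H)D'$ has the same inertia as $\varepsilon(H)$; and since similarity preserves eigenvalues, $Q$ has the same numbers of positive, zero and negative eigenvalues as $\varepsilon(H)$. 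Therefore $N_+(Q)=N_+(\varepsilon(H))$, $N_-(Q)=N_-(\varepsilon(H))$, $N_0(Q)=N_0(\varepsilon(H))$. Combining with the $n-k$ extra zero eigenvalues contributed by Theorem \ref{BKP1016} gives $N_+(\varepsilon(G))=N_+(\varepsilon(H))$, $N_0(\varepsilon(G))=n-k+N_0(\varepsilon(H))$, $N_-(\varepsilon(G))=N_-(\varepsilon(H))$, as claimed.

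The only genuinely delicate point is the very first step: checking that $rad(H)\ge 3$ really does eliminate every branch of the case analysis in Theorem \ref{BKP1001} except $A_{ij}=a_{ij}J_{n_i\times n_j}$ and $A_{ii}=\mathbf 0$. This is immediate — $rad(H)\ge 3$ means $\min_i e_H(i)\ge 3$, so no vertex of $H$ has eccentricity $1$ or $2$ — but it is worth stating explicitly, since it is exactly what collapses the structure of $\varepsilon(G)$ into the clean form to which Theorems \ref{BKP1016} and \ref{BKP1017} apply. Everything after that is bookkeeping: recognizing the quotient matrix $Q$, reading off the multiplicities, and the diagonal-scaling congruence argument for the inertia.
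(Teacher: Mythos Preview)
Your proof is correct and follows essentially the same route as the paper: apply Theorem~\ref{BKP1001} under $rad(H)\ge 3$ to reduce $\varepsilon(G)$ to a block matrix with blocks $a_{ij}J_{n_i\times n_j}$, invoke Theorem~\ref{BKP1016} to obtain the quotient matrix $Q$ and the factor $\lambda^{n-k}$, and then use a diagonal congruence together with Lemma~\ref{BKP1017} for the inertia. One small slip: with $D=\operatorname{diag}(\sqrt{n_1},\dots,\sqrt{n_k})$ the symmetric matrix you want is $DQD^{-1}$, not $D^{-1}QD$, since $(DQD^{-1})_{ij}=\sqrt{n_i}\,a_{ij}n_j\,n_j^{-1/2}=a_{ij}\sqrt{n_in_j}$; this does not affect the argument.
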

\begin{proof}
As $e_H(i)\geq 3$ for $1\leq i\leq k$, we have  $a_{ij}=0$ or $a_{ij}\geq 3$, for $1 \leq i < j \leq k$. 
By Theorem \ref{BKP1001}, the block matrix $A_{ij}$ in $\varepsilon(G)$ is $a_{ij}J_{n_i \times n_j}$. Let $M=\varepsilon(G)=(A_{ij})$,  $s_{ij}=a_{ij} $ and $p_i = 0$, for $i,j \in \{1,2,\ldots,k\}$.
Then by Theorem \ref{BKP1016}, the equitable quotient matrix of $\varepsilon(G)$ is $Q = (q_{ij})$ and   
$Spec(\varepsilon(G)) = Spec(Q) \cup \begin{Bmatrix}
   0\\n-k 
\end{Bmatrix}$ and $\phi(\lambda,\, \varepsilon(G)) = \lambda^{n-k}\, \det(\lambda I  - Q)$.
Note that, $\varepsilon(H)$ and $Q$ are congruent (this is because $Q=D^{\frac{1}{2}}\varepsilon(H)D^{\frac{1}{2}}$, where $D$ is a diagonal matrix of order $k$ and whose diagonal elements are $n_1,n_2,\ldots,n_k$). Hence by Lemma \ref{BKP1017}, the result follows. 
\end{proof}

The following result is an immediate consequence of Corollary \ref{BKP1019}.

\begin{corollary}
Let $H$ be a connected graph with $k$ vertices and $rad(H) \geq 2$ and let $m$ be a positive integer. If $Spec(\varepsilon(H)) = \begin{Bmatrix}
\lambda_1 & \lambda_2 &\ldots &\lambda_s\\
r_1 & r_2 &\ldots & r_s 
\end{Bmatrix}$, then \\ $Spec(\varepsilon(H[K_m])) = \begin{Bmatrix}
0 & m\lambda_1 & \ldots & m\lambda_s\\
k(m-1) & r_1 & \ldots & r_s
\end{Bmatrix}.$     
\end{corollary}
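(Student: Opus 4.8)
The plan is to specialize Theorem~\ref{BKP1001} to the case $G_1=G_2=\cdots=G_k=K_m$ and then read off the spectrum with Theorem~\ref{BKP1016}. First I would record the relevant data. Since $K_m$ is complete, every vertex of $G_i$ has degree $m-1=n_i-1$, so in the notation of Theorem~\ref{BKP1001} we have $U_{i_1}=V(G_i)$, $U_{i_2}=\emptyset$, $m_{i_1}=m$ and $m_{i_2}=0$ for each $i$; in particular $n_i=m$ and $n=\sum_{i=1}^k n_i=km$. Moreover, $rad(H)\ge 2$ forces $e_H(i)\ge 2$ for every $i\in V(H)$, and hence the case $a_{ij}=1$ — which by the definition of $\varepsilon(H)$ requires $e_H(i)=1$ or $e_H(j)=1$ — never occurs.

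Next I would extract the block form of $\varepsilon(H[K_m])$ from Theorem~\ref{BKP1001}. For $i\ne j$ only the subcases $a_{ij}\ge 2$ and $a_{ij}=0$ survive, and in both the off-diagonal block is $A_{ij}=a_{ij}J_{m\times m}$ (with $0\cdot J=\mathbf{0}$). For a diagonal block, since $U_{i_2}=\emptyset$ the matrix $2A(\overline{\langle U_{i_2}\rangle})$ is the empty matrix, so whether $e_H(i)=2$ or $e_H(i)\ge 3$ we get $A_{ii}=\mathbf{0}_{m\times m}=0\cdot J_{m\times m}+0\cdot I_m$. Thus $\varepsilon(H[K_m])=(A_{ij})$ with $A_{ij}=s_{ij}J_{m\times m}$ for $i\ne j$ and $A_{ii}=s_{ii}J_{m\times m}+p_iI_m$, where $s_{ij}=a_{ij}$ and $s_{ii}=p_i=0$ — precisely the hypothesis of Theorem~\ref{BKP1016}. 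This step is the only place the argument differs from (and is slightly more delicate than) Corollary~\ref{BKP1019}, which assumes $rad(H)\ge 3$: here it is the completeness of each $G_i$ (hence the vanishing of the ``$U_{i_2}$'' blocks) that lets us get away with $rad(H)\ge 2$.

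Now I would invoke Theorem~\ref{BKP1016}. Its equitable quotient matrix is $Q=(q_{ij})$ with $q_{ij}=s_{ij}n_j=a_{ij}m$ for $i\ne j$ and $q_{ii}=s_{ii}n_i+p_i=0$; since the eccentricity matrix $\varepsilon(H)=(a_{ij})$ has zero diagonal, this says exactly $Q=m\,\varepsilon(H)$. Consequently $Spec(Q)$ consists of the eigenvalues $m\lambda_t$ with multiplicities $r_t$ for $1\le t\le s$ (for instance by Lemma~\ref{BKP1002} applied to the $1\times 1$ matrix $(m)$ and $\varepsilon(H)$, noting $(m)\otimes\varepsilon(H)=m\,\varepsilon(H)$). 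Theorem~\ref{BKP1016} then gives $Spec(\varepsilon(H[K_m]))=Spec(Q)\cup\{p_1^{[n_1-1]},\ldots,p_k^{[n_k-1]}\}=Spec(Q)\cup\{0^{[k(m-1)]}\}$, which is exactly the asserted expression for $Spec(\varepsilon(H[K_m]))$. There is essentially no genuine obstacle here; the only point needing care is the vanishing of the diagonal blocks $A_{ii}$, and if one is content with the weaker hypothesis $rad(H)\ge 3$ one may instead quote Corollary~\ref{BKP1019} verbatim with $n_i=m$.
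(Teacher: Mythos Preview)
Your proof is correct and follows essentially the same route as the paper: both identify the equitable quotient matrix of $\varepsilon(H[K_m])$ as $Q=m\,\varepsilon(H)$ and then read off the spectrum via Theorem~\ref{BKP1016}. The paper's proof simply cites Corollary~\ref{BKP1019} and observes $Q=m\,\varepsilon(H)$, whereas you go back to Theorem~\ref{BKP1001} directly. In fact your argument is slightly more careful: Corollary~\ref{BKP1019} is stated only under the hypothesis $rad(H)\ge 3$, while the present corollary assumes merely $rad(H)\ge 2$; you correctly isolate the reason the weaker hypothesis suffices (the completeness of each $G_i=K_m$ kills the $U_{i_2}$ blocks, so the diagonal blocks $A_{ii}$ vanish even when $e_H(i)=2$), a point the paper's one-line proof leaves implicit.
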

\begin{proof}
By Corollary \ref{BKP1019}, $Spec(\varepsilon(H[K_m])) = Spec(Q) \cup \begin{Bmatrix}
   0\\k(m-1) 
\end{Bmatrix},$  where $Q = m\varepsilon(H)$. Hence the result follows.   
\end{proof}

\section{Spectrum of $H$-join of some graphs}

In this section, we find the characteristic polynomial, spectrum, spectral radius and inertia of $H$-join of some graphs. 

First, let us find the spectrum of $\varepsilon(H[G])$ in terms of the spectrum of $\varepsilon(H)$ alone, but not depending on the spectrums of $\varepsilon(G_1), \varepsilon(G_2),\ldots, \varepsilon(G_k)$.

\begin{theorem}\label{BKP1003}
Let $H$ be a connected graph with vertex set $V(H)= \{1,2,\ldots,k\},$  $rad(H)\geq 3$ and $Spec(\varepsilon(H)) = \begin{Bmatrix}
\lambda_1 & \lambda_2 &\ldots &\lambda_s\\
r_1 & r_2 &\ldots & r_s 
\end{Bmatrix}$. Let $G_1,G_2,\ldots,G_k$ be a family of graph with $|V(G_i)|=m$, for $i = 1,2,\ldots,k$. Then \\
$Spec(\varepsilon(H[G_1,G_2,\ldots,G_k])) = \begin{Bmatrix}
0 & m\lambda_1 & \ldots & m\lambda_s\\
k(m-1) & r_1 & \ldots & r_s
\end{Bmatrix}$. In particular, if $G$ is a any graph with $|V(G)|=m$, then 
$Spec(\varepsilon(H[G])) = \begin{Bmatrix}
0 & m\lambda_1 & \ldots & m\lambda_s\\
k(m-1) & r_1 & \ldots & r_s
\end{Bmatrix}$.
\end{theorem}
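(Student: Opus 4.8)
The plan is to read off this statement as a direct specialization of Corollary \ref{BKP1019}, so almost no new work is required. Since $rad(H)\geq 3$, the hypotheses of Corollary \ref{BKP1019} are met, and it yields $\phi(\lambda,\varepsilon(G))=\lambda^{n-k}\det(\lambda I-Q)$ together with $Spec(\varepsilon(G))=Spec(Q)\cup\begin{Bmatrix}0\\ n-k\end{Bmatrix}$, where $G=H[G_1,\ldots,G_k]$, $n=\sum_{i=1}^k n_i$, and $Q=(q_{ij})$ with $q_{ii}=0$ and $q_{ij}=a_{ij}n_j$ for $i\neq j$.

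Next I would substitute $n_i=m$ for every $i$ and trace through what Corollary \ref{BKP1019} says in this case. First, $n=\sum_{i=1}^k n_i=km$, hence $n-k=k(m-1)$, which already produces the claimed multiplicity of the eigenvalue $0$. Second, because the eccentricity matrix of a connected graph on at least two vertices has zero diagonal (here $rad(H)\geq 3$ forces $|V(H)|\geq 4$), we have $a_{ii}=0$, so $q_{ij}=m\,a_{ij}$ for all $i,j$; that is, $Q=m\,\varepsilon(H)$ exactly. Third, scaling a matrix by $m$ scales each eigenvalue by $m$ (equivalently, apply Lemma \ref{BKP1002} with the $1\times1$ matrix $A=(m)$ and $B=\varepsilon(H)$), so $Spec(Q)=\begin{Bmatrix}m\lambda_1 & m\lambda_2 & \cdots & m\lambda_s\\ r_1 & r_2 & \cdots & r_s\end{Bmatrix}$. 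Combining this with the $0$-block of multiplicity $n-k=k(m-1)$ from Corollary \ref{BKP1019} gives precisely the asserted spectrum of $\varepsilon(H[G_1,\ldots,G_k])$. The ``in particular'' assertion is just the case $G_1\cong G_2\cong\cdots\cong G_k\cong G$, for which $H[G_1,\ldots,G_k]\cong H[G]$.

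There is essentially no hard step here: the real content is already packed into Theorem \ref{BKP1001} and Corollary \ref{BKP1019}, and the point of this theorem is to observe that equal vertex counts make the spectrum depend on $H$ alone. The only things needing a word of care are the arithmetic identity $n-k=k(m-1)$ and the observation that the common value $n_i=m$ collapses the congruence $Q=D^{1/2}\varepsilon(H)D^{1/2}$ of Corollary \ref{BKP1019} into the genuine scalar multiple $Q=m\,\varepsilon(H)$, which is what allows the passage from $\varepsilon(H)$ to $\varepsilon(H[G_1,\ldots,G_k])$ purely in terms of $Spec(\varepsilon(H))$. As in Corollary \ref{BKP1019}, one should also note that if $0\in\{\lambda_1,\ldots,\lambda_s\}$ then the two listed occurrences of the eigenvalue $0$ in the displayed spectrum are to be merged.
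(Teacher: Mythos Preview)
Your proof is correct, but the paper takes a slightly different route. Rather than invoking Corollary~\ref{BKP1019} and then observing that the equal block sizes collapse $Q$ to $m\,\varepsilon(H)$, the paper goes back to Theorem~\ref{BKP1001} directly: with all $n_i=m$ and $rad(H)\geq 3$, every block $A_{ij}$ of $\varepsilon(H[G_1,\ldots,G_k])$ is $a_{ij}J_{m\times m}$, so the whole matrix factors as the Kronecker product $\varepsilon(H)\otimes J_{m\times m}$; the spectrum then follows at once from Lemma~\ref{BKP1002} and $Spec(J_{m\times m})=\begin{Bmatrix}0 & m\\ m-1 & 1\end{Bmatrix}$. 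Your argument has the advantage of being a pure specialization of an already-proved corollary, so nothing new is needed; the paper's argument makes the tensor structure explicit, which is a bit more transparent structurally and bypasses the equitable-quotient machinery. Both arrive at the result with comparable effort.
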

\begin{proof}
Let $\varepsilon(H)=(a_{ij})$. As $e_H(i)\geq 3$ for $1\leq i\leq k$, we have $a_{ij}=0$ or $a_{ij}\geq 3$, for $1 \leq i < j \leq k$.
By Theorem \ref{BKP1001}, the block matrix $A_{ij}$ in $\varepsilon(H[G_1,G_2,\ldots,G_k])$ is $a_{ij}J_{m \times m}$. Hence
$\varepsilon(H[G_1,G_2,\ldots,G_k])= \varepsilon(H) \otimes J_{m\times m}$. 
As $Spec(J_{m \times m}) =\begin{Bmatrix}
0 & m\\
(m-1) & 1 
\end{Bmatrix}$, by the Lemma \ref{BKP1002},  $Spec(\varepsilon(H[G_1,G_2,\ldots,G_k])) = 
\begin{Bmatrix}
0 & m\lambda_1 & \ldots & m\lambda_s\\
k(m-1) & r_1 & \ldots & r_s
\end{Bmatrix} $.   
\end{proof}
The following corollary is an immediate consequence of Theorem \ref{BKP1003}. 

\begin{corollary}
Let $C_n$ be a cycle on $n \geq 6$ vertices and $G_1, G_2,\ldots, G_k$ be a family of graphs on $m$ vertices.
\begin{enumerate}
    \item If $n=2t$, then $Spec(\varepsilon(C_n[G_1,G_2,\ldots,G_k]))= \begin{Bmatrix}
0 & mt & -mt\\
n(m-1)& t & t
\end{Bmatrix}.$  

\item If $n=2t+1$, then 
$Spec(\varepsilon(C_n[G_1,G_2,\ldots,G_k])) \\ = 
\begin{Bmatrix}
  0 & m(2t \cos\frac{2\pi }{2t+1}) & m(2t \cos\frac{4\pi }{2t+1})& \ldots & m(2t \cos 2\pi)\\   
  n(m-1) & 1 & 1 & \ldots & 1
\end{Bmatrix}.$
\end{enumerate}
\end{corollary}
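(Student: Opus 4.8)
The plan is to specialize Theorem \ref{BKP1003} to the case $H = C_n$. For the $C_n$-join to be defined we need $k = n$ graphs, and for $n \geq 6$ the radius satisfies $rad(C_n) = \lfloor n/2 \rfloor \geq 3$, so the hypothesis $rad(H) \geq 3$ of Theorem \ref{BKP1003} holds with $H = C_n$, each $G_i$ having $m$ vertices. Writing $Spec(\varepsilon(C_n)) = \begin{Bmatrix} \lambda_1 & \ldots & \lambda_s \\ r_1 & \ldots & r_s \end{Bmatrix}$, Theorem \ref{BKP1003} gives
\[
Spec(\varepsilon(C_n[G_1, G_2, \ldots, G_k])) = \begin{Bmatrix} 0 & m\lambda_1 & \ldots & m\lambda_s \\ n(m-1) & r_1 & \ldots & r_s \end{Bmatrix},
\]
so the only remaining task is to insert the eccentricity spectrum of $C_n$ recorded in Lemma \ref{BKP1012}.

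If $n = 2t$ (hence $t \geq 3$), Lemma \ref{BKP1012}(1) gives $\varepsilon(C_{2t})$ the eigenvalue $t$ with multiplicity $t$ and $-t$ with multiplicity $t$; scaling each eigenvalue by $m$ and appending $0$ with multiplicity $n(m-1)$ produces the asserted spectrum, and as a sanity check the multiplicities total $n(m-1) + t + t = 2tm = nm$, the order of $C_n[G_1, \ldots, G_k]$. If $n = 2t+1$ (hence $t \geq 3$), Lemma \ref{BKP1012}(2) gives the simple eigenvalues $2t\cos\!\big(\tfrac{2\pi \ell}{2t+1}\big)$, $\ell = 1, \dots, 2t+1$, of $\varepsilon(C_{2t+1})$; scaling each by $m$ and appending $0$ with multiplicity $n(m-1)$ yields the stated spectrum, the multiplicities again totalling $n(m-1) + (2t+1) = nm$.

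I do not expect any genuine obstacle here: the corollary is an immediate consequence of Theorem \ref{BKP1003}. The only points that warrant a moment's care are the elementary observation that $rad(C_n) \geq 3$ precisely when $n \geq 6$ (for $n = 4, 5$ one has $rad(C_n) = 2$, which is why the hypothesis $n \geq 6$ appears and why this method does not cover those cases), and the faithful transcription of the two cases of Lemma \ref{BKP1012}.
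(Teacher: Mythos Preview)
Your proposal is correct and follows exactly the paper's own argument: the paper's proof reads in its entirety ``By Lemma \ref{BKP1012} and Theorem \ref{BKP1003}, the result follows.'' Your version simply spells out the verification that $rad(C_n)\geq 3$ for $n\geq 6$ and the bookkeeping of multiplicities, which is perfectly fine.
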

\begin{proof}
 By Lemma \ref{BKP1012} and Theorem \ref{BKP1003}, the result follows. 
\end{proof}

In Theorem \ref{BKP1003}, we have assumed that $rad(H)\geq 3$. 
When we relax this condition, it very difficult to find the characteristic polynomial of $\varepsilon(H[G_1,G_2,\ldots,G_k])$ and its spectrum. 
So, we consider a graph $H$ with the eccentricity of some vertices of $H$ is at most 2 (that is, $rad(H)\leq 2$) and some special graphs $G_1,G_2,\ldots,G_k$ in the following results. 

\begin{theorem}\label{BKP1005}
Let $G_1,G_2,\ldots, G_k$ be a family of graphs with $|V(G_i)|=n_i$ and $\Delta(G_i) \leq n_i-2$, for $1\leq i \leq k$ and let $G= K_k[G_1,G_2,\ldots,G_k]$.  Then $\phi(\lambda, \varepsilon(G))=\prod_{i=1}^k\phi(\lambda, 2A(\overline{G_i}))$ and $\displaystyle Spec(\varepsilon(G))=\cup_{j=1}^k Spec({2A(\overline{G_j}}))$, where $A(\overline{G_j})$ is the adjacency matrix of $\overline{G_j}$. 
\end{theorem}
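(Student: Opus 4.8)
The plan is to read $\varepsilon(G)$ off directly from Theorem \ref{BKP1001} and observe that it is block diagonal, after which the two assertions are immediate. First I would record the two structural facts forced by the hypotheses. Since $H=K_k$ with $k\geq 2$, every vertex of $H$ satisfies $e_H(i)=1$, and for $i\neq j$ we have $d_H(i,j)=1=\min\{e_H(i),e_H(j)\}$, so $a_{ij}=1$ in $\varepsilon(H)$. Next, the hypothesis $\Delta(G_i)\leq n_i-2$ says that no vertex of $G_i$ has degree $n_i-1$, i.e. $U_{i_1}=\emptyset$ and $U_{i_2}=V(G_i)$; hence $m_{i_1}=0$, $m_{i_2}=n_i$ and $\langle U_{i_2}\rangle=G_i$.

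With these two facts I would substitute into the formulas of Theorem \ref{BKP1001}. For $i\neq j$, since $a_{ij}=1$ and $e_H(i)=1=e_H(j)$, the block $A_{ij}$ is the first of the ``$a_{ij}=1$'' cases; but each $J$-block appearing there has one dimension equal to $m_{i_1}=0$ or $m_{j_1}=0$, so every nonzero-looking block in fact has no rows or no columns and $A_{ij}=\mathbf{0}_{n_i\times n_j}$. For $i=j$, since $e_H(i)=1$, the diagonal block is $\begin{bmatrix} (J-I)_{m_{i_1}\times m_{i_1}} & J_{m_{i_1}\times m_{i_2}}\\ J_{m_{i_2}\times m_{i_1}} & 2A(\overline{\langle U_{i_2}\rangle})_{m_{i_2}\times m_{i_2}}\end{bmatrix}$, which collapses (again because $m_{i_1}=0$) to $A_{ii}=2A(\overline{G_i})$. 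Hence $\varepsilon(G)$ is the block-diagonal matrix with diagonal blocks $2A(\overline{G_1}),\ldots,2A(\overline{G_k})$.

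Finally, the characteristic polynomial of a block-diagonal matrix is the product of the characteristic polynomials of its diagonal blocks, so $\phi(\lambda,\varepsilon(G))=\prod_{i=1}^{k}\phi(\lambda,2A(\overline{G_i}))$, and correspondingly $Spec(\varepsilon(G))=\bigcup_{j=1}^{k}Spec(2A(\overline{G_j}))$, counted with multiplicity. I do not expect a genuine obstacle: the only point deserving a word of care is checking that all the $J$-blocks in the off-diagonal case really disappear (they have a zero dimension), so that no cross terms between distinct $G_i$'s survive — this is precisely where $\Delta(G_i)\leq n_i-2$ is used, and it is also implicit that $k\geq 2$ so that Theorem \ref{BKP1001} applies and $e_H(i)=1$ rather than $e_H(i)=0$.
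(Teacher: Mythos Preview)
Your proof is correct and follows essentially the same approach as the paper: both apply Theorem \ref{BKP1001} under the observation that $e_H(i)=1$ and $U_{i_1}=\emptyset$ (equivalently, $e_G(x)=2$ for all $x$), so that the off-diagonal blocks vanish and the diagonal blocks reduce to $2A(\overline{G_i})$. Your write-up is simply more explicit about how the $J$-blocks collapse when $m_{i_1}=0$, but the argument is the same.
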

\begin{proof} As $\Delta(G_i) \leq n_i -2$, for $1 \leq i \leq k$, by Lemma \ref{BKP1004} we have $e_G(x) =2,$ for all $x \in V(G)$. By Theorem \ref{BKP1001},  the non-diagonal block matrix $A_{ij}$ of $\varepsilon(G)$ is zero and the diagonal block matrix $A_{ii}=2A(\overline{G_i})$. 
Hence the result follows. 
\end{proof}

Using Theorem \ref{BKP1005}, we deduce the following results in \cite{Mahato1}.
\begin{corollary} [Theorem 4.6, \cite{Mahato1}]
Let $G = K_{n_1,n_2,\ldots,n_k}$ be the complete $k$-partite graph such that $\sum_{i=1}^kn_i=n;\ n_i \geq 2$ and $2 \leq k \leq n-1$. Then \\$Spec(\varepsilon(G))=\begin{Bmatrix}
-2 & 2(n_1-1) & 2(n_2 -1) & \ldots & 2(n_k-1)\\
n-k & 1 & 1 & \ldots & 1
\end{Bmatrix}.$  
\end{corollary}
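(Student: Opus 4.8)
The plan is to recognize the complete $k$-partite graph $K_{n_1,n_2,\ldots,n_k}$ as an $H$-join and apply Theorem~\ref{BKP1005} directly. Observe that $K_{n_1,n_2,\ldots,n_k} \cong K_k[\overline{K_{n_1}}, \overline{K_{n_2}}, \ldots, \overline{K_{n_k}}]$, since replacing each vertex $i$ of $K_k$ by the empty graph $\overline{K_{n_i}}$ and joining all vertices across distinct parts is precisely the definition of the complete multipartite graph. So I would set $G_i = \overline{K_{n_i}}$ for $1 \le i \le k$.

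Next I would verify the hypothesis of Theorem~\ref{BKP1005}. Since $G_i = \overline{K_{n_i}}$ is the edgeless graph on $n_i \ge 2$ vertices, $\Delta(G_i) = 0 \le n_i - 2$, so the condition $\Delta(G_i) \le n_i - 2$ holds for every $i$ (this is where $n_i \ge 2$ is used); the condition $2 \le k \le n-1$ guarantees $k \ge 2$ as required by the theorem and also that the construction is non-degenerate. Theorem~\ref{BKP1005} then gives $\phi(\lambda, \varepsilon(G)) = \prod_{i=1}^k \phi(\lambda, 2A(\overline{G_i}))$ and $Spec(\varepsilon(G)) = \bigcup_{j=1}^k Spec(2A(\overline{G_j}))$.

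It remains to compute $Spec(2A(\overline{G_j}))$. Here $\overline{G_j} = \overline{\overline{K_{n_j}}} = K_{n_j}$, so $A(\overline{G_j}) = A(K_{n_j}) = J_{n_j \times n_j} - I_{n_j}$, whose spectrum is $\begin{Bmatrix} n_j - 1 & -1 \\ 1 & n_j - 1 \end{Bmatrix}$. Multiplying by $2$ scales the eigenvalues, giving $Spec(2A(\overline{G_j})) = \begin{Bmatrix} 2(n_j-1) & -2 \\ 1 & n_j - 1 \end{Bmatrix}$. Taking the union over $j = 1, \ldots, k$ and collecting all copies of the eigenvalue $-2$ (it appears with total multiplicity $\sum_{j=1}^k (n_j - 1) = n - k$) yields exactly
\[
Spec(\varepsilon(G)) = \begin{Bmatrix}
-2 & 2(n_1-1) & 2(n_2-1) & \ldots & 2(n_k-1)\\
n-k & 1 & 1 & \ldots & 1
\end{Bmatrix},
\]
as claimed. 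There is no real obstacle here: the whole corollary is a direct specialization of Theorem~\ref{BKP1005}, and the only minor point to be careful about is the bookkeeping of the multiplicity of $-2$ when forming the union, together with noting that the stated multiplicities $1$ for the eigenvalues $2(n_i-1)$ tacitly assume these values are pairwise distinct and distinct from $-2$ (if some $n_i$ coincide, the corresponding multiplicities would add, but the spectrum as a multiset is still correct).
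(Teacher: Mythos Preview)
Your proposal is correct and follows essentially the same approach as the paper: recognize $K_{n_1,\ldots,n_k}=K_k[\overline{K_{n_1}},\ldots,\overline{K_{n_k}}]$, apply Theorem~\ref{BKP1005}, and use the known spectrum of $A(K_{n_j})$. Your write-up is simply more explicit than the paper's about verifying the hypothesis $\Delta(G_i)\le n_i-2$ and about the multiplicity bookkeeping for $-2$.
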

\begin{proof}
We know that $K_{n_1,n_2,\ldots,n_k}=K_k\big[\, \overline{K_{n_1}},\overline{K_{n_2}},\ldots, \overline{K_{n_k}}\, \big]$. By Theorem $\ref{BKP1005}$, the result follows, because $Spec(A(K_{n_i}))=\begin{Bmatrix}
-1 & (n_i-1) \\
n_i-1 & 1 
\end{Bmatrix}$, for $1\leq i\leq k$. 
\end{proof}

\begin{corollary}[Theorem 4.7, \cite{Mahato1}]
 Let $G_1$ and $G_2$ be two graphs with $n_1$ and $n_2$ vertices, respectively. If $G_1$ and $G_2$ are non-complete graphs (that is, $\Delta(G_i)\leq n_i-2$, for $i=1,2$), then $\varepsilon(G_1 \vee G_2)= \begin{bmatrix}
  2A(\overline{G_1})& 0\\
  0 & 2A(\overline{G_2})
 \end{bmatrix}$ and $\phi\big(\lambda, \varepsilon(G_1\vee G_2)\big)=\phi(\lambda, 2A(\overline{G_1}))\phi(\lambda, 2A(\overline{G_2}))$ and $\displaystyle Spec(\varepsilon(G_1\vee G_2))= Spec({2A(\overline{G_1}}))\cup Spec({2A(\overline{G_2}}))$. 
 \end{corollary}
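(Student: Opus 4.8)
The plan is to recognize that Corollary (Theorem 4.7 of \cite{Mahato1}) is simply the special case $k=2$ of Theorem \ref{BKP1005}, since $G_1 \vee G_2 = K_2[G_1,G_2]$ by the identification of the join with the $K_2$-join noted in the introduction. So the proof is a one-line deduction, but let me spell out the steps I would carry out to make the argument self-contained.

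First I would invoke Theorem \ref{BKP1005} with $k=2$ and $H=K_2$. The hypothesis $\Delta(G_i)\le n_i-2$ for $i=1,2$ is exactly the hypothesis of that theorem (with $k=2$), and it is also exactly the assertion that $G_1$ and $G_2$ are non-complete. Theorem \ref{BKP1005} then immediately gives $\phi(\lambda,\varepsilon(G_1\vee G_2))=\phi(\lambda,2A(\overline{G_1}))\,\phi(\lambda,2A(\overline{G_2}))$ and $Spec(\varepsilon(G_1\vee G_2))=Spec(2A(\overline{G_1}))\cup Spec(2A(\overline{G_2}))$. Second, for the explicit block form of the matrix $\varepsilon(G_1\vee G_2)$ I would either quote the proof of Theorem \ref{BKP1005} (which shows the off-diagonal blocks vanish and the diagonal blocks are $2A(\overline{G_i})$), or re-derive it directly from Theorem \ref{BKP1001}: since $rad(K_2)=1$ but $e_{K_2}(i)=1$ for both vertices, and $\Delta(G_i)\le n_i-2$ forces $U_{i_1}=\emptyset$ (every vertex of $G_i$ has degree at most $n_i-2<n_i-1$), each diagonal block $A_{ii}$ collapses to $2A(\overline{\langle U_{i_2}\rangle})=2A(\overline{G_i})$, and each off-diagonal block (the $a_{ij}=1$, $e_H(i)=1=e_H(j)$ case) collapses to the all-zero matrix because the only surviving sub-block $J_{m_{i_1}\times m_{j_1}}$ has no rows or columns. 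That yields the displayed $2\times 2$ block matrix.

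There is essentially no obstacle here: the result is a direct corollary, and the only thing to be careful about is the bookkeeping that $\Delta(G_i)\le n_i-2 \iff U_{i_1}=\emptyset \iff G_i$ non-complete, which makes the general block description of Theorem \ref{BKP1001} degenerate into the clean form claimed. I would also remark, as the authors do for the previous corollary, that this recovers \cite[Theorem 4.7]{Mahato1}.

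Concretely, the write-up would read:

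\begin{proof}
Since $G_1\vee G_2=K_2[G_1,G_2]$ and $\Delta(G_i)\le n_i-2$ for $i=1,2$, Theorem \ref{BKP1005} applies with $k=2$ and gives $\phi\big(\lambda,\varepsilon(G_1\vee G_2)\big)=\phi(\lambda,2A(\overline{G_1}))\,\phi(\lambda,2A(\overline{G_2}))$ and $Spec(\varepsilon(G_1\vee G_2))=Spec(2A(\overline{G_1}))\cup Spec(2A(\overline{G_2}))$. Moreover, $\Delta(G_i)\le n_i-2$ means $U_{i_1}=\emptyset$, so by Theorem \ref{BKP1001} each diagonal block of $\varepsilon(G_1\vee G_2)$ is $2A(\overline{\langle U_{i_2}\rangle})=2A(\overline{G_i})$ and each off-diagonal block is $\mathbf{0}$, which is the asserted block form.
\end{proof}
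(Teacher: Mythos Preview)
Your proposal is correct and matches the paper's approach: the paper states this corollary immediately after Theorem~\ref{BKP1005} without a separate proof, treating it as the $k=2$ instance (via $G_1\vee G_2=K_2[G_1,G_2]$), and your write-up does exactly that while also spelling out from Theorem~\ref{BKP1001} why $U_{i_1}=\emptyset$ forces the off-diagonal blocks to vanish and the diagonal blocks to equal $2A(\overline{G_i})$.
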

 
We now recall the following result in \cite{Mahato1}.

\begin{theorem}[Theorem 4.1, \cite{Mahato1}]\label{BKP1009} Let $K_k$ be the complete graph on $k$ vertices, and let $G$ be any connected graph on $n$ vertices. Then\\
$Spec(\varepsilon(K_n \circ G)) =\begin{Bmatrix}
 0 &  \frac{-3n\pm \sqrt{9n^2+16n}}{2} & (k-1)\frac{-3n\pm \sqrt{9n^2+16n}}{2} \\
 k(n-1) & k-1 & 1
\end{Bmatrix}.$    
\end{theorem}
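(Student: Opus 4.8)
The plan is to realize $K_k \circ G$ as an $H$-join so that Theorem~\ref{BKP1001} applies directly. Observe that the corona $K_k \circ G$ is obtained by attaching, to each vertex $i$ of $K_k$, a pendant copy of $G$ via the generalized corona construction; equivalently, $K_k \circ G \cong H[G_0, G_1, \ldots]$ for a suitable host graph $H$. Concretely, I would take $H = K_k \circ K_1$ (the graph on $2k$ vertices consisting of $K_k$ with one pendant vertex at each of its $k$ vertices), with vertex $i$ of the $K_k$-part replaced by $K_1$ and the pendant vertex $i'$ replaced by the copy $G$. One checks that $\mathrm{rad}(H) \geq 2$: the $k$ ``core'' vertices have eccentricity $2$ while each pendant vertex has eccentricity $3$ (when $k \geq 2$). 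Hence the host $H$ has exactly the structure handled by Theorem~\ref{BKP1001}, with the $G_i$ for core vertices being $K_1$ (so $m_{i_1} = 1$, $m_{i_2} = 0$) and the $G_i$ for pendant vertices being $G$.

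Next I would assemble $\varepsilon(K_k \circ G)$ using Theorem~\ref{BKP1001} and Lemma~\ref{BKP1004}: in $K_k \circ G$ two vertices in different pendant copies are at distance $3$, a pendant-copy vertex and a non-adjacent core vertex are at distance $3$, adjacent core--core vertices at distance $2$, a core vertex and its own pendant copy at distance $1$, and within a pendant copy the distances are $1$ or $2$. Since every vertex of $G$ has eccentricity $3$ and every core vertex has eccentricity $2$, the eccentricity matrix entries simplify dramatically: the within-$G$ blocks vanish (because $e = 3 \geq 3$ forces the diagonal block to be $\mathbf{0}$ by the last case of $A_{ii}$), the core--core blocks vanish, each core--own-pendant block is the all-ones $1 \times n$ vector scaled by $1$, and each cross-pendant block (distance $3$) is $3 J_{n \times n}$, while each pendant--non-adjacent-core block is $3 J_{n \times 1}$. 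So $\varepsilon(K_k \circ G)$ is a block matrix whose blocks are all scalar multiples of all-ones matrices (and zero blocks), which puts us in the setting of Theorem~\ref{BKP1016}: the whole matrix has the form $(s_{ij} J_{n_i \times n_j} + p_i I_{n_i})$ with all $p_i = 0$. Therefore $Spec(\varepsilon(K_k \circ G)) = Spec(Q) \cup \{0^{[N - (2k)]}\}$ where $N = k(n+1)$ is the order and $Q$ is the $2k \times 2k$ equitable quotient matrix; this accounts for the eigenvalue $0$ with multiplicity $k(n+1) - 2k = k(n-1)$.

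It then remains to compute $Spec(Q)$. The quotient matrix $Q$ has a block structure reflecting the symmetry among the $k$ branches: a $k \times k$ core block and a $k \times k$ ``pendant-weight'' block, with $q$-entries $0$ on the core diagonal, $0$ between distinct cores (distance-$2$ edge gives eccentricity-matrix entry $0$? no — wait: adjacent cores have $d = 2 = \min(e,e')$? one endpoint is a core with $e = 2$, so the entry is $2$, hence $q_{ij} = 2$ between distinct cores), $1 \cdot n$ linking a core to its own pendant copy, $3n$ linking a core to a foreign pendant copy, and $3$ or $0$ among the pendant rows depending on the core adjacency — I would write $Q$ explicitly and exploit that it is invariant under the $S_k$ action permuting branches, so its spectrum splits into a $2$-dimensional ``symmetric'' part (spanned by the all-ones patterns) and a $(k-1)$-fold repeated ``difference'' part. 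Diagonalizing the $2 \times 2$ symmetric reduction gives the two eigenvalues $\frac{-3n \pm \sqrt{9n^2 + 16n}}{2}$ each once, and the $(k-1)$-dimensional orthogonal complement yields $(k-1)$ copies of the same pair; alternatively one can apply Theorem~\ref{mainthm} (the Saravanan $H$-join determinant formula) directly to $B(\mathbf{M}, \mathbf{u}, \sigma)$ with $M_i$ the zero blocks and $u_i = v_i$ all-ones vectors. Matching these against the claimed spectrum finishes the proof.

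\medskip

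\noindent\textbf{Main obstacle.} The delicate point is bookkeeping the eccentricities and distances correctly in $K_k \circ G$ — in particular getting the core--core entry ($2$, not $0$) and the pendant--foreign-core entry ($3$) right — and then reducing the resulting $2k \times 2k$ quotient matrix by symmetry without sign errors so that the surd $\sqrt{9n^2 + 16n}$ emerges cleanly; the rest is the routine machinery of Theorems~\ref{BKP1001} and \ref{BKP1016}.
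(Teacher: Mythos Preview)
The paper does not prove this statement; it is quoted from \cite{Mahato1}, and the generalization that follows it is likewise referred back to \cite{Mahato1} for its proof. So there is no in-paper argument to compare your attempt against, and I can only comment on the correctness of your outline.

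Your overall plan---realize $K_k\circ G$ as an $H$-join over $H=K_k\circ K_1$, observe that every block of $\varepsilon$ is a scalar multiple of an all-ones matrix, and invoke Theorem~\ref{BKP1016}---is the right one. The gap is in the distance and entry bookkeeping. In $K_k\circ G$ two distinct core vertices are at distance $1$ (they lie in $K_k$), a core vertex and a vertex in a foreign pendant copy are at distance $2$, and a core vertex and a vertex in its own pendant copy are at distance $1$. With core eccentricity $2$ and pendant eccentricity $3$, the eccentricity-matrix blocks are therefore: core--core $=0$, core--own-pendant $=0$, core--foreign-pendant $=2J$, within-pendant $=0$, cross-pendant $=3J$. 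Your proposal instead has core--own-pendant $=J$ and pendant--foreign-core $=3J$, and vacillates on the core--core entry; these are genuine errors that would produce the wrong quotient matrix. With the correct blocks the $2k\times 2k$ quotient is
\[
Q=\begin{bmatrix}0&2n(J-I)\\2(J-I)&3n(J-I)\end{bmatrix},
\]
and the $S_k$ symmetry splits it: the all-ones sector gives eigenvalues $(k-1)\dfrac{3n\pm\sqrt{9n^2+16n}}{2}$ (note the $+3n$), while each of the $k-1$ sum-zero sectors gives $\dfrac{-3n\pm\sqrt{9n^2+16n}}{2}$. These two pairs are \emph{not} the same up to the factor $k-1$, so your assertion that the orthogonal complement yields ``$(k-1)$ copies of the same pair'' is also wrong. (Sanity check: $k=2$, $n=1$ gives $K_2\circ K_1=P_4$, whose eccentricity eigenvalues are $\pm4,\pm1$; this matches the corrected formulas and shows that the displayed statement in the paper carries a sign typo in the multiplicity-one eigenvalues.)
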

We now generalize the Theorem \ref{BKP1009} as follows. 
\begin{theorem}
Let $H = K_k \tilde{\circ} \wedge_{i=1}^{k} G_{i}$, where $G_i$ is any graph with $|G_i| =n$, for  $i=1,2,\ldots,k$. Then $Spec(\varepsilon(H)) = \begin{Bmatrix}
 0 &  \frac{-3n\pm \sqrt{9n^2+16n}}{2} & (k-1)\frac{-3n\pm \sqrt{9n^2+16n}}{2} \\
 k(n-1) & k-1 & 1
\end{Bmatrix}.$    
\end{theorem}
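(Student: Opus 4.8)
The plan is to realize $H = K_k \tilde{\circ} \wedge_{i=1}^k G_i$ as an $H'$-join of graphs for a suitable small graph $H'$, so that Theorem~\ref{BKP1001} computes $\varepsilon(H)$ and Theorem~\ref{mainthm} computes its characteristic polynomial. Observe that $H$ is obtained by taking $K_k$, blowing up each vertex $i$ into a single vertex $w_i$, and attaching a pendant copy of $G_i$ to $w_i$. Concretely, $H \cong \widetilde{H}[\,K_1, G_1, K_1, G_2, \ldots, K_1, G_k\,]$ where $\widetilde{H}$ is the graph on $2k$ vertices in which the $k$ ``hub'' vertices form a clique $K_k$, and the $j$-th hub is joined only to the $j$-th ``leaf'' vertex; equivalently $\widetilde{H}$ has radius $2$ and each leaf vertex has eccentricity $3$ (since two leaves in different branches are at distance $3$). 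First I would pin down the eccentricities: every hub vertex has eccentricity $2$, and every vertex of each $G_i$-block has eccentricity $e_{\widetilde H}(\text{leaf}_i) = 3 \geq 3$, so by Lemma~\ref{BKP1004}(ii)--(iii) the $G_i$-blocks contribute a zero diagonal block to $\varepsilon(H)$, and by Theorem~\ref{BKP1001} the off-diagonal block between two $G_i$-blocks (distance $3$ in $\widetilde H$) is $3J_{n\times n}$, the block between a $G_i$-block and its own hub (distance $1$, but the hub has eccentricity $2 \neq 1$ while the leaf side has eccentricity $3$) is governed by the $a_{ij}=1$, $e_H(i)\neq 1$, $e_H(j)\neq 1$ situation — so it is the zero block — and the block between a $G_i$-block and a \emph{foreign} hub (distance $2$) is the zero block as well since the hub has eccentricity $2$ and $d=2$ means $a=2\geq 2$, giving $2J$. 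Wait — I must be careful: a hub $w_i$ has eccentricity $2$, a foreign $G_j$-vertex has eccentricity $3$, and $d_H(w_i, G_j\text{-vertex}) = 2 = \min\{2,3\}$, so that block is $2J_{1\times n}$, nonzero.

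With these blocks identified, $\varepsilon(H)$ has a clean structure: it is a $2k \times 2k$ block matrix built from the $k$ hub vertices $w_1,\dots,w_k$ (each a $1\times 1$ block) and the $k$ blocks $G_1,\dots,G_k$ (each $n \times n$ and contributing $\mathbf 0$ on the diagonal). Among hubs: $\varepsilon(H)$ restricted to the hubs equals $\varepsilon(K_k) = \mathbf 0$ if $rad(K_k)=1$... actually every hub has eccentricity $2$ in $\widetilde H$, and $d_H(w_i,w_j)=1$, so since $\min\{e,e\}=2\neq 1$ the hub–hub blocks vanish: the hub–hub part is $\mathbf 0_{k\times k}$. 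So the only nonzero entries of $\varepsilon(H)$ are: the hub$_i$–$G_j$ blocks for $i\neq j$, each equal to $2J$, and the $G_i$–$G_j$ blocks for $i\neq j$, each equal to $3J$. This is exactly a matrix of the form $B(\mathbf M, \mathbf u, \sigma)$ in Theorem~\ref{mainthm}, with all $M_i = \mathbf 0$ (of sizes $1$ or $n$), all vectors $u_i = v_i = \mathbf 1$ (the all-ones vector of the appropriate length), and coupling constants $\sigma$ equal to $2$ or $3$ according to the pattern above. Then $\phi(\lambda,\mathbf 0_{n_i}) = \lambda^{n_i}$, and $\Gamma_i(\lambda) = \mathbf 1^t(\lambda I)^{-1}\mathbf 1 = n_i/\lambda$ by Lemma~\ref{evmain} (with $\mu=0$, $\|u\|^2=n_i$). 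Plugging into \eqref{maineqn} gives $\phi(\lambda,\varepsilon(H)) = \lambda^{N - 2k}\cdot(\text{product of } n_i)\cdot \det(\widetilde B(\lambda))$ up to the bookkeeping of how many $\lambda$'s come out, where $N = k + kn$ is the total order; the surviving factor $\det(\widetilde B)$ is a $2k\times 2k$ determinant whose structure is highly symmetric under permuting the $k$ branches.

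The final step is to evaluate that $2k \times 2k$ determinant. Because the whole configuration is invariant under the symmetric group $S_k$ permuting the branches, $\widetilde B(\lambda)$ is an equitable-type matrix and its determinant factors through a $2\times 2$ ``symmetric'' part and a $(k-1)$-fold repeated ``asymmetric'' part — this is exactly the mechanism that produces the multiplicities $k-1$ versus $1$ in the claimed spectrum. I would diagonalize by passing to the $S_k$-isotypic decomposition: on the trivial isotypic component one gets a $2\times 2$ matrix (coupling the ``sum of hub variables'' with the ``sum of $G$-block variables'') whose determinant vanishing yields a quadratic in $\lambda$ with roots $\tfrac{-3n \pm \sqrt{9n^2+16n}}{2}$ — note $(k-1)$ times that quadratic is what multiplies out — while on each of the $k-1$ copies of the standard representation one gets another $2\times 2$ matrix giving the quadratic with roots $\tfrac{-3n\pm\sqrt{9n^2+16n}}{2}$ scaled, i.e.\ roots $(k-1)\cdot\tfrac{-3n\pm\sqrt{9n^2+16n}}{2}$. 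Combined with the $\lambda^{k(n-1)}$ factor (the kernel coming from the repeated all-ones structure inside each $n$-dimensional block, via Theorem~\ref{BKP1016}-type reasoning), this reproduces $Spec(\varepsilon(H))$ exactly. The main obstacle I anticipate is purely computational bookkeeping: correctly tracking which blocks of $\varepsilon(H)$ are $2J$ versus $3J$ versus $\mathbf 0$ (the hub–hub and hub–own-branch blocks are the subtle ones, because eccentricity $2$ of the hubs makes several distance-$1$ and distance-$2$ entries vanish), and then cleanly extracting the two quadratic factors from the $2k\times 2k$ determinant without sign or multiplicity errors; verifying that the constant $0$ eigenvalue has multiplicity exactly $k(n-1)$ and not more requires checking that neither quadratic has $0$ as a root, which holds since $9n^2+16n \neq 9n^2$.
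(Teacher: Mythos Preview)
Your approach via realizing $H$ as the $\widetilde H$-join $\widetilde H[K_1, G_1, \ldots, K_1, G_k]$ and invoking Theorem~\ref{BKP1001} is sound and produces the correct block structure of $\varepsilon(H)$: hub--hub and hub--own-leaf blocks vanish, hub--foreign-leaf blocks are $2J$, leaf--leaf blocks are $3J$, and the diagonal $G_i$-blocks are all zero because $e_{\widetilde H}(\ell_i)=3\geq 3$. One justification slips: there is no case ``$a_{ij}=1$, $e_H(i)\neq 1$, $e_H(j)\neq 1$'' in Theorem~\ref{BKP1001}, since $a_{ij}=1$ forces some eccentricity to equal~$1$; for hub $h_i$ and its own leaf $\ell_i$ one actually has $d_{\widetilde H}(h_i,\ell_i)=1<2=\min\{e_{\widetilde H}(h_i),e_{\widetilde H}(\ell_i)\}$, so $a_{ij}=0$ and the zero block comes from the last case of the theorem. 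Your conclusion is right; only the labelling of the case is off.

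The paper's own proof is a single line: it observes that, because $\varepsilon(H)$ depends only on the sizes $|G_i|=n$ and not on the internal edges of the $G_i$ (exactly what your block analysis establishes), the argument for $K_k\circ G$ in~\cite{Mahato1} applies verbatim. So your route is more elaborate than needed---you could simply cite Theorem~\ref{BKP1009} once the block structure is known---but the $S_k$-isotypic decomposition is a legitimate self-contained alternative that makes the multiplicities transparent.

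There is, however, a concrete bookkeeping error in your final paragraph: the two isotypic pieces are swapped. On the \emph{trivial} $S_k$-isotype, $J_k-I_k$ acts by $k-1$, and the reduced $2\times 2$ block $\left(\begin{smallmatrix}0&2n(k-1)\\2(k-1)&3n(k-1)\end{smallmatrix}\right)$ has eigenvalues $(k-1)\cdot\tfrac{3n\pm\sqrt{9n^2+16n}}{2}$, each of multiplicity~$1$. On each of the $k-1$ copies of the \emph{standard} isotype, $J_k-I_k$ acts by $-1$, giving $\left(\begin{smallmatrix}0&-2n\\-2&-3n\end{smallmatrix}\right)$ with eigenvalues $\tfrac{-3n\pm\sqrt{9n^2+16n}}{2}$, each of multiplicity $k-1$. (This also reveals a sign typo in the displayed statement: the multiplicity-$1$ eigenvalues should carry $+3n$, not $-3n$; one checks this already for $k=2$, $n=1$, where $H\cong P_4$ has $\varepsilon$-spectrum $\{4,1,-1,-4\}$ by Theorem~\ref{BKP1015}.)
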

\begin{proof} 
The proof is same as in Theorem \ref{BKP1009} (See \cite{Mahato1}).
\end{proof}

We next prove that the following result is much more general than the results given $\cite{Mahato1}$ and $\cite{Wang1}$.

\begin{theorem}\label{BKP1014}
Let $H$ be a graph with $V(H) = \{1,2,\ldots,k\}$, $\varepsilon(H)=(a_{ij})$ and $rad(H)\geq 2$. Let $G_1,G_2,\ldots,G_k$ be a family of graphs with $|V(G_i)|=n_i$, $G=H[G_1,G_2,\ldots,G_k]$ and $n = \sum_{i=1}^k n_i$.  Whenever $e_H(i)=2$, $G_i= K_{n_i}$. Then 
\begin{enumerate}[label = (\roman*)]
\item $Spec(\varepsilon(G)) = Spec(Q) \cup \begin{Bmatrix}
   0\\n-k 
\end{Bmatrix}$ and $\phi(\lambda,\, \varepsilon(G)) = \lambda^{n-k}\, \det(\lambda I  - Q)$, where  \[ Q = (q_{ij}) = \begin{cases}
\text{$0$} &\quad\text{if $i=j$}\\
\text{$a_{ij}n_j $} &\quad\text{otherwise}.
\end{cases} \]
\item $N_+(\varepsilon(G)) = N_+(\varepsilon(H))$, $N_0(\varepsilon(G)) = n-k + N_0(\varepsilon(H))$ and $N_-(\varepsilon(G)) = N_-(\varepsilon(H))$.
\end{enumerate}
\end{theorem}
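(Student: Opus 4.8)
The plan is to reduce $\varepsilon(G)$ to a form covered by Theorem~\ref{BKP1016} and then invoke the congruence argument already used in Corollary~\ref{BKP1019}. First I would apply Theorem~\ref{BKP1001} to describe the blocks of $\varepsilon(G)$. Since $rad(H)\geq 2$, every vertex of $H$ has $e_H(i)\geq 2$, so the case $e_H(i)=1$ never occurs; hence each off-diagonal block $A_{ij}$ is either $a_{ij}J_{n_i\times n_j}$ (when $a_{ij}\geq 2$) or $\mathbf{0}_{n_i\times n_j}$ (when $a_{ij}=0$); the $a_{ij}=1$ cases are vacuous. For the diagonal blocks, when $e_H(i)\geq 3$ we get $A_{ii}=\mathbf{0}$, and when $e_H(i)=2$ the hypothesis $G_i=K_{n_i}$ forces $U_{i_2}=\emptyset$ (every vertex of $K_{n_i}$ has degree $n_i-1$), so again $A_{ii}=\mathbf{0}$. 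Therefore $\varepsilon(G)=(A_{ij})$ with $A_{ij}=a_{ij}J_{n_i\times n_j}$ for $i\neq j$ and $A_{ii}=\mathbf{0}_{n_i\times n_i}$.

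Next I would match this with the hypothesis of Theorem~\ref{BKP1016}: take $s_{ij}=a_{ij}$ for $i\neq j$, and $s_{ii}=0$, $p_i=0$ for the diagonal blocks, so that $A_{ii}=s_{ii}J_{n_i\times n_i}+p_iI_{n_i}=\mathbf{0}$. Theorem~\ref{BKP1016} then gives the equitable quotient matrix $Q=(q_{ij})$ with $q_{ij}=a_{ij}n_j$ for $i\neq j$ and $q_{ii}=0$, together with $Spec(\varepsilon(G))=Spec(Q)\cup\{0^{[n_1-1]},\ldots,0^{[n_k-1]}\}=Spec(Q)\cup\begin{Bmatrix}0\\ n-k\end{Bmatrix}$, whence $\phi(\lambda,\varepsilon(G))=\lambda^{n-k}\det(\lambda I-Q)$. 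This proves part~(i).

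For part~(ii), I would observe that $Q=D^{1/2}\,\varepsilon(H)\,D^{1/2}$ where $D=\mathrm{diag}(n_1,\ldots,n_k)$, exactly as in Corollary~\ref{BKP1019}: indeed $(D^{1/2}\varepsilon(H)D^{1/2})_{ij}=\sqrt{n_i}\,a_{ij}\sqrt{n_j}$, and since $a_{ii}=0$ and we only need the positive/negative/zero counts, which are unchanged by conjugation by the diagonal scaling $D^{-1/2}QD^{-1/2}$ having entries $a_{ij}\sqrt{n_j/n_i}$ — more cleanly, $Q$ and $\varepsilon(H)$ are congruent via the nonsingular matrix $D^{1/2}$. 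By Lemma~\ref{BKP1017}, $Q$ and $\varepsilon(H)$ have the same inertia: $N_\pm(Q)=N_\pm(\varepsilon(H))$ and $N_0(Q)=N_0(\varepsilon(H))$. Combining with the $n-k$ extra zero eigenvalues from part~(i) yields $N_+(\varepsilon(G))=N_+(\varepsilon(H))$, $N_-(\varepsilon(G))=N_-(\varepsilon(H))$, and $N_0(\varepsilon(G))=n-k+N_0(\varepsilon(H))$.

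The only genuine subtlety — and the one step I would be careful about — is the diagonal-block analysis when $e_H(i)=2$: one must use the hypothesis $G_i=K_{n_i}$ precisely to kill the $2A(\overline{\langle U_{i_2}\rangle})$ term, since for a general $G_i$ this block is nonzero and the resulting matrix would not have the clean equitable-partition structure required by Theorem~\ref{BKP1016}. Everything else is a direct bookkeeping application of results already established in the paper.
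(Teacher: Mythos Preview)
Your proposal is correct and follows essentially the same approach as the paper's proof: reduce the blocks of $\varepsilon(G)$ via Theorem~\ref{BKP1001} to the form $A_{ij}=a_{ij}J_{n_i\times n_j}$ with zero diagonal blocks, then invoke Theorem~\ref{BKP1016} for part~(i) and the congruence argument of Corollary~\ref{BKP1019} together with Lemma~\ref{BKP1017} for part~(ii). Your write-up is in fact more carefully argued than the paper's terse version, including the explicit identification of why the hypothesis $G_i=K_{n_i}$ is needed precisely when $e_H(i)=2$.
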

\begin{proof}
As $e_H(i) \geq 2$, for $1 \leq i \leq k$, we have $a_{ij} = 0$ or $a_{ij} \geq 2$. If $e_{H}(i) = 2$, then $G_i=K_{n_i}$ and hence $A_{ii}= \textbf{0}_{n_i \times n_i}$ and $A_{ij}=a_{ij}J_{n_i \times n_j}$ ($i \neq j)$. Similarly, we note that if $e_{H}(i) \geq 3$, then $A_{ij}=a_{ij}J_{n_i \times n_j}$ for all $i,j \in \{1,2,\ldots,k\}$.
Hence $\varepsilon(G) = a_{i,j}J_{n_i \times n_j}$. Let $M=\varepsilon(G)$,  $s_{ij}=a_{ij} $ and $p_i = 0$, for $i,j \in \{1,2,\ldots,k\}$.
Then by Theorem \ref{BKP1016} and Lemma \ref{BKP1017}, we get the required result.
\end{proof}

Using Theorem \ref{BKP1014}, we prove the following result. 
\begin{theorem}\label{BKP1015}
Let $P_4$ be a path on $4$ vertices  and $G=P_4[G_1,G_2,G_3,G_4]$ with $|G_i|=n_i$, where $G_2\cong G_3\cong K_1$. Then     
$$Spec(\varepsilon(G)) = \begin{Bmatrix}
\sqrt{\frac{\alpha + \sqrt{\beta}}{2}} & \sqrt{\frac{\alpha - \sqrt{\beta}}{2}} & 0 & - \sqrt{\frac{\alpha - \sqrt{\beta}}{2}} & -\sqrt{\frac{\alpha + \sqrt{\beta}}{2}}\\
1 & 1 & n-4 & 1 & 1
\end{Bmatrix},$$
where $\alpha = 4n_1 + 4n_4 + 9n_1n_4$ and $\beta = (4n_1 + 4n_4 + 9n_1n_4)^2 -64n_1n_4$.
Hence 
\begin{align*}
E(\varepsilon(G)) &= \sqrt{2\big[4n_1+4n_2+9n_1n_4+ \sqrt{(4n_1 + 4n_4 + 9n_1n_4)^2 -64n_1n_4\big]}}\\  & \hspace{1cm}+ \sqrt{2\big[4n_1+4n_2+9n_1n_4 - \sqrt{(4n_1 + 4n_4 + 9n_1n_4)^2 -64n_1n_4\big]}}.  \end{align*}
\end{theorem}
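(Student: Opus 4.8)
The plan is to specialize Theorem~\ref{BKP1014} to the graph $H=P_4$ with $V(P_4)=\{1,2,3,4\}$ and edges $12,23,34$. First I would compute the eccentricities in $P_4$: $e_{P_4}(1)=e_{P_4}(4)=3$ and $e_{P_4}(2)=e_{P_4}(3)=2$, so $rad(P_4)=2$. Since $G_2\cong G_3\cong K_1$ are (trivially) complete, the hypothesis ``$G_i=K_{n_i}$ whenever $e_H(i)=2$'' of Theorem~\ref{BKP1014} is satisfied (here $n_2=n_3=1$), with $G_1,G_4$ arbitrary. Next I would write down $\varepsilon(P_4)=(a_{ij})$: the only vertex pairs with $d=\min\{e,e\}$ are those realizing eccentricities, giving $a_{13}=a_{24}=0$ (distance $2<3$ but also $2=\min$? — careful here: $d_{P_4}(2,4)=2=\min\{e(2),e(4)\}=\min\{2,3\}=2$, so actually $a_{24}=2$; similarly $a_{13}=2$), $a_{12}=a_{34}=0$ (distance $1<2$), $a_{14}=3=\min\{3,3\}$, and $a_{23}=0$ (distance $1<2$). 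So $\varepsilon(P_4)$ has nonzero entries $a_{14}=a_{41}=3$, $a_{13}=a_{31}=2$, $a_{24}=a_{42}=2$, $a_{23}=a_{32}=0$; I should double-check this against Lemma~\ref{BKP1004} and the definition before proceeding.

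Then, by Theorem~\ref{BKP1014}(i), $Spec(\varepsilon(G)) = Spec(Q)\cup\{0^{[n-4]}\}$ where $Q=(q_{ij})$ with $q_{ii}=0$ and $q_{ij}=a_{ij}n_j$ for $i\ne j$. With $n_2=n_3=1$ this is the $4\times 4$ matrix
\[
Q=\begin{bmatrix}
0 & 0 & 2 & 3n_4\\
0 & 0 & 0 & 2n_4\\
2n_1 & 0 & 0 & 0\\
3n_1 & 2 & 0 & 0
\end{bmatrix}.
\]
The core computation is then $\det(\lambda I - Q)$. I would expand this $4\times 4$ determinant directly (or use Lemma~\ref{BKP1008} on a $2\times 2$ block partition) to get a polynomial in $\lambda$; by the bipartite-like structure of $Q$ (the underlying digraph pairs $\{1,2\}$ against $\{3,4\}$), only even powers of $\lambda$ survive, so $\phi(\lambda,Q)=\lambda^4 - \alpha\lambda^2 + \gamma$ for suitable $\alpha,\gamma$ expressed in $n_1,n_4$. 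Matching the claimed eigenvalues $\pm\sqrt{(\alpha\pm\sqrt\beta)/2}$ forces $\alpha = 4n_1+4n_4+9n_1n_4$ (the sum of the two values $\lambda^2$, i.e. the coefficient from the $2\times 2$ principal minors) and $\gamma = 4n_1n_4$ (the product), whence $\beta = \alpha^2 - 4\gamma = (4n_1+4n_4+9n_1n_4)^2 - 64n_1n_4$; I would verify these by the explicit determinant expansion. Then solving the biquadratic gives the four nonzero eigenvalues, and appending $0$ with multiplicity $n-4$ from Theorem~\ref{BKP1014}(i) yields the spectrum.

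Finally, the energy formula is just $E(\varepsilon(G))=\sum|\lambda_i|$: since the nonzero eigenvalues come in $\pm$ pairs, $E(\varepsilon(G)) = 2\sqrt{(\alpha+\sqrt\beta)/2} + 2\sqrt{(\alpha-\sqrt\beta)/2}$, which simplifies to $\sqrt{2(\alpha+\sqrt\beta)}+\sqrt{2(\alpha-\sqrt\beta)}$, matching the stated expression (modulo the evident typo $n_2$ for $n_4$ inside the radicals). The main obstacle — really the only nonroutine part — is getting the entries of $\varepsilon(P_4)$ exactly right, since the Ecc-matrix depends delicately on comparing $d_{P_4}(i,j)$ with $\min\{e_{P_4}(i),e_{P_4}(j)\}$; once $\varepsilon(P_4)$ and hence $Q$ are pinned down, everything else is an eigenvalue computation for a small structured matrix, and the hypothesis-checking for Theorem~\ref{BKP1014} is immediate because $K_1$ is complete.
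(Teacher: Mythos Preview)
Your proposal is correct and follows essentially the same route as the paper: apply Theorem~\ref{BKP1014} with $H=P_4$, $G_2=G_3=K_1$ to obtain the very same quotient matrix $Q$, then reduce $\det(\lambda I-Q)$ to a biquadratic via the $2\times 2$ block structure (the paper uses Lemma~\ref{BKP1010} with $N_1=N_4=\lambda I_2$ rather than Lemma~\ref{BKP1008}, but this is a cosmetic difference). One small slip to fix when you carry out the computation: the constant term is $\gamma=\det(Q)=16n_1n_4$, not $4n_1n_4$; this is what gives $\beta=\alpha^2-4\gamma=\alpha^2-64n_1n_4$, so your final $\beta$ is right but your stated $\gamma$ is off by a factor of $4$.
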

\begin{proof}
Replace $H=P_4$ and $G_2 = G_3 = K_1$ in Theorem $\ref{BKP1014}$, we have    
$\phi(\lambda, \varepsilon(G)) = \lambda^{n-4} \det(\lambda I - Q)$, where $n = 2+n_1 + n_4$ and $ 
Q = \begin{bmatrix} 
0 & 0 & 2 & 3n_4\\
0 & 0 & 0 &  2n_4\\
2n_1 & 0 & 0 &   0   \\
3n_1 & 2 & 0 &  0
\end{bmatrix}.$ 
Now, 
\begin{align*}
 det(\lambda I - Q)& = \begin{vmatrix}
\lambda & 0 & 2 & 3n_4\\
0 &\lambda & 0 &  2n_4\\
2n_1 & 0 & \lambda &   0   \\
3n_1 & 2 & 0 &  \lambda     
 \end{vmatrix} \\
 & = \begin{vmatrix}
   \lambda I_{2 \times 2}& B\\
   C & \lambda I_{2 \times 2}
 \end{vmatrix}, 
\end{align*}
where $B= \begin{bmatrix}
 2 & 3n_4\\
 0 & 2n_4
\end{bmatrix}$ and $C = \begin{bmatrix}
 2n_1 & 0\\
 3n_1 & 2
\end{bmatrix}.$
Let us take $N_1= \lambda I_{2 \times 2}  = N_4$,\, $N_2= B$ and $N_3 = C$. 
Applying Lemma \ref{BKP1010}, we have
$$ det(\lambda I - Q) = (\lambda^4 - (4n_1 + 4n_4 + 9n_1n_4)\lambda^2 + 16n_1n_4).$$ 
Thus 
\begin{equation}\label{BKP1eq2.13}
\phi(\lambda, \varepsilon(G)) = \lambda^{n-4}\big( \lambda^4 - (4n_1 + 4n_4 + 9n_1n_4)\lambda^2 + 16n_1n_4\big).    
\end{equation}
After solving the characteristic polynomial, we get
$$Spec(\varepsilon(G)) = \begin{Bmatrix}
\sqrt{\frac{\alpha + \sqrt{\beta}}{2}} & \sqrt{\frac{\alpha - \sqrt{\beta}}{2}} & 0 & - \sqrt{\frac{\alpha - \sqrt{\beta}}{2}} & -\sqrt{\frac{\alpha + \sqrt{\beta}}{2}}\\
1 & 1 & n-4 & 1 & 1
\end{Bmatrix},$$
where $\alpha = 4n_1 + 4n_4 + 9n_1n_4$ and $\beta = (4n_1 + 4n_4 + 9n_1n_4)^2 -64n_1n_4$.
\end{proof}
Using Theorem \ref{BKP1015}, we deduce the following results in 
\cite{Wang1} and \cite{Mahato1}. 

\begin{corollary}[Lemma 2.7, \cite{Wang1}]
For $a,b \geq 1$, the double star $S_{a,b}$  is the graph consisting of the union of two stars
$K_{1,a}$ and $K_{1,b}$ together with an edge joining their centers, 
$$Spec(\varepsilon(S_{a,b})) = \begin{Bmatrix}
\sqrt{\frac{\alpha + \sqrt{\beta}}{2}} & \sqrt{\frac{\alpha - \sqrt{\beta}}{2}} & 0 & - \sqrt{\frac{\alpha - \sqrt{\beta}}{2}} & -\sqrt{\frac{\alpha + \sqrt{\beta}}{2}}\\
1 & 1 &a+b-2 & 1 & 1
\end{Bmatrix},$$
where $\alpha = 4a + 4b + 9ab$ and $\beta = (4a + 4b + 9ab)^2 -64ab$. 
\end{corollary}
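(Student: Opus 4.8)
The plan is to realize the double star $S_{a,b}$ as a specific $H$-join and then apply Theorem \ref{BKP1015} verbatim. First I would observe that $S_{a,b}$ is obtained from the path $P_4$ on vertices $1,2,3,4$ by substituting $G_1 = \overline{K_a}$ (the $a$ leaves attached to one center), $G_2 = G_3 = K_1$ (the two centers), and $G_4 = \overline{K_b}$ (the $b$ leaves attached to the other center). Indeed, in $P_4[\overline{K_a}, K_1, K_1, \overline{K_b}]$ vertex $2$ is joined to all of $G_1$ and to vertex $3$, vertex $3$ is joined to vertex $2$ and to all of $G_4$, and there are no edges inside $G_1$ or $G_4$; this is exactly the incidence structure of the double star with centers corresponding to the copies of $K_1$ at positions $2$ and $3$. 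So $S_{a,b} \cong P_4[\overline{K_a}, K_1, K_1, \overline{K_b}]$ with $n_1 = a$, $n_2 = n_3 = 1$, $n_4 = b$, and total order $n = a + b + 2$.

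Next I would check that the hypotheses of Theorem \ref{BKP1015} are met: we need $G_2 \cong G_3 \cong K_1$, which holds by construction, and (implicitly, via the route through Theorem \ref{BKP1014}) we need $rad(P_4) \geq 2$ and $G_i$ complete whenever $e_{P_4}(i) = 2$. In $P_4$ the two internal vertices $2,3$ have eccentricity $2$ and the end vertices $1,4$ have eccentricity $3$, so $rad(P_4) = 2$; the vertices of eccentricity $2$ are exactly $2$ and $3$, and there $G_2 = G_3 = K_1$ is complete, so the condition is satisfied. (The end vertices have eccentricity $3 \geq 3$, so no completeness restriction applies to $G_1$ or $G_4$, consistent with taking them to be the edgeless graphs $\overline{K_a}$, $\overline{K_b}$.) Therefore Theorem \ref{BKP1015} applies directly.

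Finally I would simply substitute $n_1 = a$ and $n_4 = b$ (and $n - 4 = a + b - 2$) into the spectrum formula of Theorem \ref{BKP1015}. This turns $\alpha = 4n_1 + 4n_4 + 9n_1 n_4$ into $\alpha = 4a + 4b + 9ab$ and $\beta = (4n_1 + 4n_4 + 9n_1 n_4)^2 - 64 n_1 n_4$ into $\beta = (4a + 4b + 9ab)^2 - 64ab$, and the multiplicity $n - 4$ of the zero eigenvalue becomes $a + b - 2$, yielding exactly the claimed $Spec(\varepsilon(S_{a,b}))$. There is essentially no obstacle here: the only thing that needs care is verifying the isomorphism $S_{a,b} \cong P_4[\overline{K_a}, K_1, K_1, \overline{K_b}]$ and confirming the eccentricity bookkeeping in $P_4$, after which the corollary is an immediate specialization of Theorem \ref{BKP1015}.
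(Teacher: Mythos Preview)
Your proposal is correct and follows exactly the paper's approach: the paper's proof is simply the one-line observation that $S_{a,b} = P_4[\overline{K_a},K_1,K_1,\overline{K_b}]$ followed by an appeal to Theorem \ref{BKP1015}. Your additional verification of the eccentricity hypotheses in $P_4$ (needed for Theorem \ref{BKP1014} underlying Theorem \ref{BKP1015}) is in fact more careful than what the paper writes out.
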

\begin{proof}
Clearly, we see that, $S_{a,b} = P_4[\overline{K_a},K_1,K_1,\overline{K_b}]$ and by Theorem \ref{BKP1015}, the result follows.    
\end{proof}

\begin{corollary}[Theorem 2.3, \cite{Wang1}] For $a,b \geq 1$,
\begin{align*}
E(\varepsilon(S_{a,b})) &= \sqrt{2\big(4a+4b+9ab+ \sqrt{(4a + 4b + 9ab)^2 -64ab\big)}}\\  & \hspace{1cm}+ \sqrt{2\big(4a+4b+9ab - \sqrt{(4a + 4b + 9ab)^2 -64ab\big)}}.  \end{align*}    
\end{corollary}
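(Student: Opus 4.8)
The plan is to extract the eccentricity spectrum of $S_{a,b}$ from the preceding corollary and then simply sum the absolute values of its eigenvalues. First I would record the identification $S_{a,b}\cong P_4[\overline{K_a},K_1,K_1,\overline{K_b}]$ already used above, so that Theorem~\ref{BKP1015} applies with $n_1=a$ and $n_4=b$ (hence $n=a+b+2$). It gives
\[
Spec(\varepsilon(S_{a,b})) = \begin{Bmatrix}
\sqrt{\frac{\alpha + \sqrt{\beta}}{2}} & \sqrt{\frac{\alpha - \sqrt{\beta}}{2}} & 0 & -\sqrt{\frac{\alpha - \sqrt{\beta}}{2}} & -\sqrt{\frac{\alpha + \sqrt{\beta}}{2}}\\
1 & 1 & a+b-2 & 1 & 1
\end{Bmatrix},
\]
where $\alpha = 4a+4b+9ab$ and $\beta = (4a+4b+9ab)^2-64ab$.

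Next I would compute $E(\varepsilon(S_{a,b}))=\sum_i|\lambda_i|$ directly from this list. The eigenvalue $0$, of multiplicity $a+b-2$, contributes nothing, while the remaining four eigenvalues occur in two $\pm$ pairs, so
\[
E(\varepsilon(S_{a,b})) = 2\sqrt{\frac{\alpha+\sqrt{\beta}}{2}} + 2\sqrt{\frac{\alpha-\sqrt{\beta}}{2}} = \sqrt{2(\alpha+\sqrt{\beta})} + \sqrt{2(\alpha-\sqrt{\beta})}.
\]
Substituting the values of $\alpha$ and $\beta$ then yields precisely the asserted expression.

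There is essentially no obstacle here: the argument is pure bookkeeping on top of Theorem~\ref{BKP1015}, and the identity $2\sqrt{(\alpha\pm\sqrt\beta)/2}=\sqrt{2(\alpha\pm\sqrt\beta)}$ is the only algebraic step. The one point that might warrant a remark is that the four nonzero entries of the spectrum are genuine real numbers, i.e.\ that $\beta\ge 0$ and $\alpha-\sqrt{\beta}\ge 0$; but this is automatic since $\varepsilon(S_{a,b})$ is real symmetric and hence has real spectrum (alternatively, $\beta=\alpha^2-64ab\le \alpha^2$ gives $\alpha-\sqrt\beta\ge 0$, and a short estimate using $a,b\ge 1$ gives $\beta\ge 0$). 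Hence the energy formula follows immediately, recovering Theorem~2.3 of \cite{Wang1}.
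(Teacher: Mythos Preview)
Your proposal is correct and matches the paper's approach: the corollary is stated without proof in the paper because it follows immediately from Theorem~\ref{BKP1015} (or equivalently from the preceding spectrum corollary) by summing absolute values, exactly as you do. The only content is the identification $S_{a,b}=P_4[\overline{K_a},K_1,K_1,\overline{K_b}]$ and the substitution $n_1=a$, $n_4=b$, which you have.
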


\begin{corollary}[Theorem 4.4, \cite{Mahato1}]
The $n$-barbell graph $B_{n,n}$ is
the graph obtained by connecting two copies of complete graph $K_n$ by a bridge (cut edge), $$Spec(\varepsilon(B_{n,n})) = \begin{Bmatrix}
 0 & \frac{3(n-1) \pm \sqrt{9n^2-2n-7}}{2} & \frac{-3(n-1) \pm \sqrt{9n^2-2n-7}}{2}\\ 
  2(n-2) & 1 & 1
\end{Bmatrix}$$     
\end{corollary}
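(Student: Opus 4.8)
The plan is to exhibit $B_{n,n}$ as a $P_4$-join and then invoke Theorem \ref{BKP1015} directly. First I would check the identification $B_{n,n}\cong P_4[K_{n-1},K_1,K_1,K_{n-1}]$. Writing $V(P_4)=\{1,2,3,4\}$ with edges $12,23,34$, and replacing vertex $1$ by $K_{n-1}$ and vertex $2$ by $K_1$, the edge $12$ of $P_4$ makes the single vertex at position $2$ adjacent to all of $K_{n-1}$, so $\{G_1\text{'s vertices}\}\cup\{G_2\text{'s vertex}\}$ induces a clique $K_n$; symmetrically positions $3,4$ induce a second $K_n$, and the edge $23$ contributes exactly the bridge joining them. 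Since here $G_2\cong G_3\cong K_1$, the hypotheses of Theorem \ref{BKP1015} hold with $n_1=n_4=n-1$, and the total order of $G=B_{n,n}$ is $(n-1)+1+1+(n-1)=2n$; thus in the notation of that theorem the symbol ``$n$'' equals $2n$, so the zero eigenvalue there has multiplicity $2n-4=2(n-2)$.

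Next I would substitute $n_1=n_4=n-1$ into the conclusion of Theorem \ref{BKP1015}. This gives $\alpha=4(n-1)+4(n-1)+9(n-1)^2=(n-1)(9n-1)$ and $\beta=\alpha^2-64(n-1)^2=(n-1)^2\big[(9n-1)^2-64\big]=9(n-1)^2(9n^2-2n-7)$, where $9n^2-2n-7=(n-1)(9n+7)\ge 0$ for $n\ge 2$, so that $\sqrt{\beta}=3(n-1)\sqrt{9n^2-2n-7}$. It then remains to denest the radicals $\sqrt{(\alpha\pm\sqrt\beta)/2}$ appearing in the spectrum. I would verify by squaring that $\sqrt{(\alpha+\sqrt\beta)/2}=\tfrac12\big(3(n-1)+\sqrt{9n^2-2n-7}\big)$ and $\sqrt{(\alpha-\sqrt\beta)/2}=\tfrac12\big(\sqrt{9n^2-2n-7}-3(n-1)\big)$; the two facts that make this work are the identity $9(n-1)^2+(9n^2-2n-7)=18n^2-20n+2=2\alpha$ and the inequality $\sqrt{9n^2-2n-7}\ge 3(n-1)$ (equivalently $(9n^2-2n-7)-9(n-1)^2=16(n-1)\ge 0$), the latter guaranteeing that the extracted square roots are taken with the correct (nonnegative) sign.

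Plugging these simplifications back in, the four nonzero eigenvalues $\pm\sqrt{(\alpha\pm\sqrt\beta)/2}$ collapse to $\dfrac{3(n-1)\pm\sqrt{9n^2-2n-7}}{2}$ and $\dfrac{-3(n-1)\pm\sqrt{9n^2-2n-7}}{2}$, each of multiplicity $1$, and together with $0$ of multiplicity $2(n-2)$ this is exactly the asserted $Spec(\varepsilon(B_{n,n}))$. I do not expect any real obstacle here: the only care needed is bookkeeping — keeping the clique size $n$ distinct from the order ``$n=2n$'' used in Theorem \ref{BKP1015}, and tracking signs in the denesting so that, for instance, $-\sqrt{(\alpha-\sqrt\beta)/2}$ is correctly identified with $\dfrac{3(n-1)-\sqrt{9n^2-2n-7}}{2}$ rather than its negative.
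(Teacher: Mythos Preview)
Your proposal is correct and follows essentially the same route as the paper: both recognize $B_{n,n}\cong P_4[K_{n-1},K_1,K_1,K_{n-1}]$ and apply Theorem~\ref{BKP1015}. The only cosmetic difference is that the paper plugs $n_1=n_4=n-1$ into the characteristic polynomial (Equation~\ref{BKP1eq2.13}) and recognizes $\lambda^2$ as the perfect square $\big(\tfrac{3(n-1)\pm\sqrt{9n^2-2n-7}}{2}\big)^2$, whereas you start from the radical form of the spectrum and denest $\sqrt{(\alpha\pm\sqrt\beta)/2}$; the two computations are algebraically equivalent.
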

\begin{proof}
One can see that, $B_{n,n} = P_4[K_{n-1},K_1,K_1,K_{n-1}]$. By Equation \ref{BKP1eq2.13}, we have $$\phi(\lambda, \varepsilon(B_{n,n}))= \lambda^{2n-4}\big( \lambda^4 - (8(n-1) + 9(n-1)^2)\lambda^2 + 16(n-1)^2\big)$$
After solving the characteristic polynomial of $B_{n,n}$, we get $\lambda^{2n-4} = 0$ or
\begin{align*}
\lambda^{2} & = \frac{8(n-1) + 9(n-1)^2 \pm \sqrt{(8(n-1) + 9(n-1)^2)^2- 64(n-1)^2}}{2}\\
& = \frac{(9n^2-10n+1) \pm 3(n-1)\sqrt{9n^2-2n-7}}{2}\\
& = \Bigg(\frac{3(n-1) \pm \sqrt{9n^2 -2n -7}}{2} \Bigg)^2
\end{align*}
Hence the result follows.
\end{proof}

\section{Spectrum of $K_{1,m}$-join of regular graphs} In this section, we first recall the following well-known result in \cite{Bapat}.

\begin{theorem}[Lemma 9.3, \label{D-J+I}\cite{Bapat}]
Let $A$ be an $n\times n$ matrix and $J$ be the $n\times n$ all one matrix. Then det$(A+J)=$det $A$+cof $A$.
\end{theorem}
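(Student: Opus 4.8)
The plan is to prove the identity $\det(A+J) = \det A + \operatorname{cof} A$, where $\operatorname{cof} A$ denotes the sum of all cofactors of $A$ (equivalently, the sum of all entries of the adjugate $\operatorname{adj}(A)$). The most transparent route is multilinearity of the determinant in the rows (or columns). Write $J = \mathbf{1}\mathbf{1}^t$ where $\mathbf{1}$ is the all-ones column vector, so that every row of $J$ equals $\mathbf{1}^t$. Denote the rows of $A$ by $a_1^t, a_2^t, \ldots, a_n^t$. Then the $i$-th row of $A+J$ is $a_i^t + \mathbf{1}^t$.

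First I would expand $\det(A+J)$ by multilinearity, splitting each row $a_i^t + \mathbf{1}^t$ into $a_i^t$ and $\mathbf{1}^t$. This yields a sum of $2^n$ determinants, each indexed by a subset $S \subseteq \{1,\ldots,n\}$ of rows that are replaced by $\mathbf{1}^t$. The key observation is that any such determinant with $|S| \geq 2$ vanishes, because it has two equal rows (both equal to $\mathbf{1}^t$). Hence only two types of terms survive: the term $S = \emptyset$, which is exactly $\det A$; and the $n$ terms with $|S| = 1$, say $S = \{i\}$, each of which is the determinant of the matrix obtained from $A$ by replacing its $i$-th row with $\mathbf{1}^t$.

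Next I would evaluate each surviving single-row term. Expanding the determinant of the matrix with $i$-th row replaced by $\mathbf{1}^t$ along that $i$-th row gives $\sum_{j=1}^n 1 \cdot C_{ij}(A) = \sum_{j=1}^n C_{ij}(A)$, where $C_{ij}(A)$ is the $(i,j)$-cofactor of $A$ (note the cofactors are those of $A$ itself, since deleting row $i$ and column $j$ from the modified matrix is the same as deleting them from $A$). Summing over $i$ gives $\sum_{i=1}^n \sum_{j=1}^n C_{ij}(A) = \operatorname{cof} A$. Combining, $\det(A+J) = \det A + \operatorname{cof} A$, as claimed.

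I do not anticipate a genuine obstacle here; the only point requiring minor care is bookkeeping in the multilinear expansion — making sure the $2^n$-term expansion is correctly organized by the subset $S$ of "replaced" rows and that the vanishing of all $|S|\ge 2$ terms is justified by the repeated-row argument. One could alternatively give a proof via the matrix determinant lemma applied to the rank-one update $A + \mathbf{1}\mathbf{1}^t$, writing $\det(A+\mathbf{1}\mathbf{1}^t) = \det A + \mathbf{1}^t \operatorname{adj}(A)\,\mathbf{1}$ and noting $\mathbf{1}^t \operatorname{adj}(A)\,\mathbf{1}$ is precisely the sum of all entries of $\operatorname{adj}(A)$, i.e. $\operatorname{cof} A$; but this implicitly needs the rank-one update formula, so the direct multilinearity argument is cleaner and fully self-contained.
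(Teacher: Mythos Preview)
Your proof is correct. Note, however, that the paper does not actually supply a proof of this statement: it merely \emph{recalls} Lemma~9.3 from Bapat's book as a known tool, so there is no in-paper argument to compare against. The multilinearity argument you give is the standard one (and is essentially what appears in Bapat), so there is no discrepancy to discuss.
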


Next, let us prove the following result using Theorems \ref{mainthm}, \ref{BKP1001} and \ref{D-J+I} and
Lemma \ref{evmain}. 

\begin{theorem}\label{BKP1006}
Let $K_{1,m}$ be a star graph with $V(K_{1,m}) = \{v_0,v_1,\ldots,v_m\}$, and $deg_{K_{1,m}}(v_0)= m$. 
Let $\overline{G_i}$ be a $k_i$-regular graph with $n_i$ vertices and $A(\overline{G_i})$ be its adjacency matrix, for $0 \leq i \leq m$ and let $H = K_{1,m}[G_0,G_1,\ldots,G_m]$.
\begin{enumerate}[label = (\roman*)]
        \item If $G_0$ is complete, then $$\phi(\lambda, \varepsilon(H)) = \lambda^{n_0 -1} \Bigg(\displaystyle \prod_{i=1}^{m} \frac{\phi(\lambda, 2A(\overline{G_i}))}{\lambda-2k_i} \Bigg)  \prod_{i=1}^m\big(\lambda - 2( k_i -n_i)\big) \Big[\lambda - (2\lambda + n_0) \sum_{i=1}^m \frac{n_i}{\lambda - 2 (k_i - n_i)} \Big]. $$
\item If $G_0$ is not complete, then $$\phi(\lambda, \varepsilon(H)) = \Bigg(\frac{\displaystyle \prod_{i=0}^{m} \phi(\lambda, 2A(\overline{G_i}))}{\displaystyle \prod_{i=1}^{m}\lambda-2k_i} \Bigg)  \prod_{i=1}^m\big(\lambda - 2( k_i -n_i)\big) \Big[1- 2\sum_{i=1}^m \frac{n_i}{\lambda - 2 (k_i - n_i)} \Big]. $$
\end{enumerate}
\end{theorem}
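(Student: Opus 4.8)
The plan is to apply Theorem \ref{BKP1001} to compute $\varepsilon(H)$ explicitly as a block matrix, then feed this into Theorem \ref{mainthm} (with the center vertex $v_0$ playing the role of the "hub" block). First I would note that in $K_{1,m}$ the center $v_0$ has eccentricity $1$ and each leaf $v_i$ ($1\le i\le m$) has eccentricity $2$; hence $\varepsilon(K_{1,m})=(a_{ij})$ has $a_{0i}=1$ for all $i$ (since $d=1=\min\{e(v_0),e(v_i)\}$) and $a_{ij}=2$ for $1\le i<j\le m$ (since $d(v_i,v_j)=2=e(v_i)=e(v_j)$). Applying Theorem \ref{BKP1001}: for the leaf blocks $G_i$, $G_j$ ($i,j\ge 1$, $i\ne j$) we are in the case $a_{ij}=2$, so $A_{ij}=2J_{n_i\times n_j}$; the diagonal block $A_{ii}=2A(\overline{G_i})$ since $e_H(i)=2$; and the block between $G_0$ and $G_i$ depends on whether $G_0$ is complete. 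If $G_0=K_{n_0}$ then $U_{0_2}=\emptyset$, so the $(0,i)$ block is $J_{n_0\times n_i}$ and $A_{00}=\mathbf{0}$ (here $e_H(0)=1$, $U_{0_1}=V(G_0)$). If $G_0$ is not complete, $e_H(0)=1$ but $e_H(i)\ne 1$, so we are in Subcase 2.2 and the $(0,i)$ block has the form $\left[\begin{smallmatrix}J&J\\ \mathbf 0&\mathbf 0\end{smallmatrix}\right]$; also $A_{00}=\left[\begin{smallmatrix}(J-I)&J\\ J&2A(\overline{\langle U_{0_2}\rangle})\end{smallmatrix}\right]$.

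**Next I would set up the characteristic polynomial.** The key structural observation is that the "leaf–leaf" interaction blocks all have the rank-one form $2\,\mathbf 1_{n_i}\mathbf 1_{n_j}^t$, and the leaf diagonal blocks are $M_i=2A(\overline{G_i})$. This is precisely the hypothesis of Theorem \ref{mainthm}: take $M_i=2A(\overline{G_i})$, $u_i=v_i=\mathbf 1_{n_i}$, and $\sigma_{ij}=2$ for leaf pairs. The center block is handled separately. For case (i) ($G_0$ complete), $A_{00}=\mathbf 0$ and the center–leaf blocks are $J_{n_0\times n_i}=\mathbf 1_{n_0}\mathbf 1_{n_i}^t$; I would include the center as an extra block $M_0=\mathbf 0_{n_0}$ with vectors $\mathbf 1_{n_0}$ and $\sigma_{0i}=\sigma_{i0}=1$. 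Then Theorem \ref{mainthm} gives
\[
\phi(\lambda,\varepsilon(H))=\Big(\prod_{i=0}^{m}\phi(\lambda,M_i)\,\Gamma_i(\lambda)\Big)\det\widetilde B,
\]
where by Lemma \ref{evmain} applied to the all-ones eigenvector: $\overline{G_i}$ being $k_i$-regular, $\mathbf 1_{n_i}$ is an eigenvector of $A(\overline{G_i})$ with eigenvalue $k_i$, hence of $M_i=2A(\overline{G_i})$ with eigenvalue $2k_i$, so $\Gamma_i(\lambda)=n_i/(\lambda-2k_i)$; for the center block $M_0=\mathbf 0$, the eigenvalue is $0$, so $\Gamma_0(\lambda)=n_0/\lambda$. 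This explains the factor $\prod_{i=1}^m \phi(\lambda,2A(\overline{G_i}))/(\lambda-2k_i)$ together with the $\lambda^{n_0-1}$ (from $\phi(\lambda,M_0)\Gamma_0(\lambda)=\lambda^{n_0}\cdot n_0/\lambda=n_0\lambda^{n_0-1}$, absorbing constants).

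**The remaining work is evaluating $\det\widetilde B$.** Here $\widetilde B$ is the $(m+1)\times(m+1)$ matrix with diagonal entries $1/\Gamma_i(\lambda)$ and off-diagonal entries $-\sigma_{ij}$: for case (i), the $(0,0)$ entry is $\lambda/n_0$, the $(i,i)$ entry ($i\ge1$) is $(\lambda-2k_i)/n_i$, the center–leaf entries are $-1$, and the leaf–leaf entries are $-2$. I would compute this determinant by first writing the leaf–leaf $-2$ block as $-2\mathbf 1\mathbf 1^t + \text{diag}$, i.e. peel off a rank-one matrix, then use Theorem \ref{D-J+I} or the matrix determinant lemma / the standard "arrowhead + rank-one" expansion. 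Concretely, after clearing the leaf diagonal one gets terms $\sum_i n_i/(\lambda-2(k_i-n_i))$ — note $2(k_i-n_i)$ arises because the leaf diagonal contribution $(\lambda-2k_i)/n_i$ gets shifted by the $-2$ (i.e. $-2n_i$ after scaling) from the rank-one leaf–leaf part; this is where the shift from $2k_i$ to $2(k_i-n_i)$ in the statement comes from, and also why the factor $\prod_{i=1}^m(\lambda-2(k_i-n_i))$ appears. The bracketed factor $\lambda-(2\lambda+n_0)\sum n_i/(\lambda-2(k_i-n_i))$ comes from the final Schur-complement step eliminating the center row/column against the modified leaf block. **The main obstacle** will be carrying out this determinant reduction cleanly — keeping track of the three superimposed structures (arrowhead from the center, rank-one from the leaf–leaf $J$'s, diagonal from the $M_i$'s) and correctly absorbing the scalar normalizations $n_i$ so that the shift $2k_i \mapsto 2(k_i-n_i)$ and the coefficient $2\lambda+n_0$ emerge exactly as stated. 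Case (ii) ($G_0$ not complete) is entirely analogous: now $M_0=2A(\overline{G_0})$ contributes $\phi(\lambda,2A(\overline{G_0}))$ honestly (no $\lambda^{n_0-1}$ collapse), the center–leaf coupling constant is $1$ rather than being tied to a trivial block, and one must be slightly careful that the vectors $u_0,v_0$ for the center block are $\mathbf 1_{n_0}$ (the all-ones pattern is still what couples to the leaves, even though $A_{00}$ has the $(J-I,J;J,2A(\overline{\langle U_{0_2}\rangle}))$ form) — I would verify that $\mathbf 1_{n_0}$ is still an eigenvector of the relevant reduced block, or alternatively treat $G_0$ via Theorem \ref{BKP1014}-style reasoning since its diagonal block structure there matches. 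Running through the same determinant computation yields the claimed formula with the leading $1-2\sum n_i/(\lambda-2(k_i-n_i))$ in place of the center-adjusted bracket.
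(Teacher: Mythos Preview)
Your plan for Case~(i) is essentially the paper's: identify $\varepsilon(H)$ via Theorem~\ref{BKP1001}, apply Theorem~\ref{mainthm} with $M_0=\mathbf{0}_{n_0}$, $M_i=2A(\overline{G_i})$ for $i\ge1$, $u_i=v_i=\mathbf{1}_{n_i}$, $\sigma_{0i}=1$, $\sigma_{ij}=2$ for $i,j\ge1$, compute $\Gamma_i$ via Lemma~\ref{evmain}, and then evaluate the $(m+1)\times(m+1)$ determinant $\det\widetilde{B}$ by a Schur complement on the center row/column (Lemma~\ref{BKP1008}) followed by Theorem~\ref{D-J+I} on the resulting diagonal-plus-rank-one leaf block. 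That is exactly what the paper does.

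For Case~(ii) there is a genuine gap. You say the center--leaf coupling constant is $1$ and that $A_{00}$ carries the full $\left[\begin{smallmatrix}(J-I)&J\\ J&2A(\overline{\langle U_{0_2}\rangle})\end{smallmatrix}\right]$ structure. But the hypothesis is that $\overline{G_0}$ is $k_0$-regular, so $G_0$ itself is $(n_0-1-k_0)$-regular; once $G_0$ is not complete we have $k_0\ge1$, hence \emph{every} vertex of $G_0$ has degree $<n_0-1$. Thus $U_{0_1}=\emptyset$ and $U_{0_2}=V(G_0)$. Consequently $A_{00}$ collapses to $2A(\overline{G_0})$ (the $(J-I)$ and $J$ sub-blocks are empty), and the $(0,i)$ block $\left[\begin{smallmatrix}J&J\\ \mathbf 0&\mathbf 0\end{smallmatrix}\right]$ collapses to $\mathbf{0}_{n_0\times n_i}$ because its top rows are vacuous. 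The correct input to Theorem~\ref{mainthm} is therefore $\sigma_{0i}=0$, not $1$: the center block decouples completely from the leaves. This is precisely why the bracket in part~(ii) reads $1-2\sum_{i=1}^m n_i/(\lambda-2(k_i-n_i))$ with no $n_0$ and no extra $\lambda$ in front, and why $\phi(\lambda,2A(\overline{G_0}))$ sits outside as an untouched factor. With your choice $\sigma_{0i}=1$ the determinant of $\widetilde{B}$ would acquire center--leaf cross terms and produce a bracket depending on $n_0$, contradicting the stated formula. Once you make this correction, $\det\widetilde{B}$ factors as $\frac{\lambda-2k_0}{n_0}$ times the $m\times m$ leaf determinant, and only the single Theorem~\ref{D-J+I} step (no preliminary Schur complement) is needed.
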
  
\begin{proof}
For $1 \leq i < j\leq m$,   $e_{K_{1,m}}(v_i)= d_{K_{1,m}}(v_i, v_j)= e_{K_{1,m}}(v_j)=2$, we have $a_{ij} =2$. Also, $e_{K_{1,m}}(v_0)= 1= d_{K_{1,m}}(v_0,v_i)$, for $i\in \{1,2,\ldots, m\}$. The eccentricity matrix of $K_{1,m}$ is indexed by the vertices of $v_0,v_1,\ldots,v_m$ and therefore 
\begin{equation} \label{BKP1eq2.4}
 \varepsilon(K_{1,m}) = (a_{ij}) = \begin{cases}
\text{$0$} &\quad\text{if $i=j$}\\
\text{$1$} &\quad\text{if $i \neq j$ \text{ and } $i= 1$ \text{ or } $j = 1$}\\
\text{$2$} 
&\quad\text{otherwise},
\end{cases} 
\end{equation}
where $i,j \in \{0,1,2\ldots,m\}$. For $0 \leq i \leq m$,  $u_i=J_{n_i\times1}$  is an eigenvector corresponding to the eigenvalue $k_i$ of     $A(\overline{G_i})$  (as 
$\overline{G_i}$ is $k_i$-regular).
For $0 \leq i \leq m$, define $v_i= u_i$, $M_i = 2A(\overline{G_i})$ and  
\begin{equation}\label{BKP1eq2.1}
 \phi(\lambda, M_i) = \det(\lambda I - M_i). 
\end{equation}
So, by Lemma \ref{evmain},
\begin{equation}\label{BKP1eq2.2}
\Gamma_i(\lambda) = \frac{{\lVert u_i \rVert}^2}{\lambda -2k_i}= \frac{n_i}{\lambda -2k_i}.   
\end{equation}
(as, $u_i$ is an eigenvector of $A(\overline{G_i})$ corresponding to the eigenvalue $k_i$).\\
Let $M = (2A(\overline{G_0}),2A(\overline{G_1 }),\ldots,2A(\overline{G_m}))$.

The eccentricity matrix of $H$ is indexed by the vertices of $G_0, G_1, \ldots, G_m$, respectively. \\
\noindent\textbf{Case 1.} $G_0$ is  complete, that is, $k_0=0$.\\
As $\overline{G_i}$ is regular, we have $U_{i_2}= \emptyset$ or $U_{i_2}= V(G_i)$. More precisely, if $\overline{G_i}$ is complete, then $U_{i_2}= \emptyset$ and if $\overline{G_i}$ is not complete, then $U_{i_2}= V(G_i)$. Note that \[ e_H(x) =  \begin{cases}
  \text{$1$} &\quad\text{if $x \in V(G_0)$}\\
  \text{2}  &\quad\text{if $x \in V(H)\setminus V(G_0)$}.
\end{cases} \]   So, by Equation  \ref{BKP1eq2.4} and Theorem \ref{BKP1001},
\begin{align*}
\varepsilon(H)= \begin{bmatrix}
\textbf{0}_{n_0\times n_0} & J_{n_0\times n_1} & J_{n_0\times n_2}& \ldots & J_{n_0\times n_m}\\
J_{n_1\times n_0}& 2A(\overline{G_1}) & 2 J_{n_1\times n_2}& \ldots & 2J_{n_1\times n_m}\\
J_{n_2\times n_0}& 2J_{n_2\times n_1}& 2A(\overline{G_2}) & \ldots & 2J_{n_2\times n_m}\\
\vdots & \vdots & \vdots & \ddots & \vdots\\
J_{n_m\times n_0}& 2J_{n_m\times n_1}& 2J_{n_m\times n_2} & \ldots & 2A(\overline{G_m})
\end{bmatrix}.    
\end{align*}
For $i \neq j$, choose,\[\sigma_{ij} = \begin{cases}
 \text{$1$} &\quad\text{if $i=1$ \text{ or } $j=1$ }\\
 \text{$2$} &\quad\text{otherwise.}
\end{cases}\]  
One can check that, $B(\bold M,\bold u,\sigma)=\varepsilon(H)$.\\
By Theorem \ref{mainthm}, Equations \ref{BKP1eq2.1} and 
 \ref{BKP1eq2.2} and as $G_0$ is complete, we have $\phi(\lambda, \varepsilon(H))=$
\begin{equation}\label{BKP1eq2.6}
 \bigg(\prod_{i=0}^{m} \phi\big(\lambda, 2A(\overline{G_i})\big)\Gamma_i(\lambda)\bigg)\det( \tilde{\varepsilon}(H))   = n_0\lambda^{n_0 -1} \bigg(\displaystyle \prod_{i=1}^{m} n_i\, \frac{\phi(\lambda, 2A(\overline{G_i}))}{\lambda-2k_i} \bigg)\,  det(\tilde{\varepsilon}(H)),
\end{equation}   
where
\begin{equation}\label{BKP1eq2.5}
 \hspace{-2.2cm}\tilde{\varepsilon}(H) = 
\begin{bmatrix}
\frac{\lambda}{n_0}& -1 & -1 & \ldots &-1\\
-1 & \frac{\lambda-2k_1}{n_1} & -2 &  \ldots & -2 \\
-1 & -2 & \frac{\lambda-2k_2}{n_2} &   \ldots & -2& \\
\vdots & \vdots & \vdots & \ddots & \vdots\\
-1 & -2 & -2 &  \ldots &  \frac{\lambda-2k_
m}{n_m}
\end{bmatrix}, (\text{as }k_0=0)   \end{equation}

Next, we have to find $\det(\tilde{\varepsilon}(H))$. Let $r_i = \frac{\lambda-2k_
i}{n_i}, \text{ for } 0\leq i\leq m$. Then
\begin{equation*}
\tilde{\varepsilon}(H) = \begin{bmatrix}
r_0& -1 & -1 & \ldots & -1\\
-1 & r_1 & -2 &  \ldots & -2 \\
-1 & -2 & r_2 &   \ldots & -2 & \\
\vdots & \vdots & \vdots & \ddots & \vdots\\
-1 & -2 & -2 &  \ldots &  r_m
\end{bmatrix} 
\end{equation*}
Now, let us consider the following sub matrices of $\tilde{\varepsilon}(H)$.\\
Let $A=r_0I_{1\times 1}$, $B= -J_{1\times m}= 
\begin{bmatrix}  -1 & -1 & \ldots &-1 \end{bmatrix}_{1 \times m}$, 
$C=B^t$ and 
\begin{equation*}
 D = 
\begin{bmatrix}
 r_1 & -2 &  \ldots & -2 \\
 -2 & r_2 &   \ldots & -2 \\
  \vdots & \vdots & \ddots & \vdots\\
 -2 & -2 &  \ldots &  r_m
\end{bmatrix}.
\end{equation*}
Applying Lemma \ref{BKP1008}, we have 
\begin{align*} 
\det(\tilde{\varepsilon}(H))=det(A)det(D-CA^{-1}B)= r_0 \begin{vmatrix}
    r_1 - \frac{1}{r_0} &  -(2 + \frac{1}{r_0}) & \ldots & -(2 + \frac{1}{r_0}) \vspace{0.2cm}\\
    -(2 + \frac{1}{r_0}) & r_2 - \frac{1}{r_0} & \ldots &-(2 + \frac{1}{r_0})\\
    \vdots & \vdots & \ddots &\vdots\\
    -(2 + \frac{1}{r_0}) & -(2 + \frac{1}{r_0}) & \ldots & r_m - \frac{1}{r_0}
\end{vmatrix} 
\end{align*}
Let $b=2+\frac{1}{r_0}$ and $M =(a_{ij})$ with \[ a_{ij} = \begin{cases}
\text{$r_i-\frac{1}{r_0}$} &\quad\text{if $i=j$ \text{ and } $1 \leq i \leq m$}\\
\text{$0$} &\quad\text{if } i\neq j.
\end{cases} \]
Then \begin{align*}  det(\tilde{\varepsilon}(H))& =r_0\ det[M-b(J-I)] \\ & =r_0 \big( (-b)^m \det[A + J] \big), \text{ where } A= \Big[\Big(\frac{-1}{b}\Big) M - I\Big]. 
\end{align*}
By Theorem \ref{D-J+I}, 
\begin{align*}
det(\tilde{\varepsilon}(H)) & = r_0 \Big( (-b)^m \Big[\Big(\frac{-1}{b}\Big)^{m} \prod_{i=1}^m(a_{ii}+b) + \Big(\frac{-1}{b}\Big)^{m-1} \sum_{i=1}^m \frac{\prod_{j=1}^m(a_{jj}+b)}{a_{ii}+b}\Big]\Big)\\
& = r_0 \prod_{i=1}^m(a_{ii}+b) \Big[1 - b \sum_{i=1}^m \frac{1}{a_{ii}+b} \Big]\\
& = r_0 \prod_{i=1}^m(r_i + 2) \Big[1 - (2 + \frac{1}{r_0}) \sum_{i=1}^m \frac{1}{r_i + 2} \Big].
\end{align*}
Therefore, $det(\tilde{\varepsilon}(H)) =\displaystyle \prod_{i=0}^{m} \frac{1}{n_i}\prod_{i=1}^m\big(\lambda - 2( k_i -n_i)\big) \Big[\lambda - (2\lambda + n_0) \sum_{i=1}^m \frac{n_i}{\lambda - 2 (k_i - n_i)} \Big]. $\\
Hence by Equation \ref{BKP1eq2.6}, the result follows.
 \\
\noindent\textbf{Case 2.} $G_0$ is $k_0$-regular and not complete.\\
Note that $e_H(x) = 2$, for all $x \in V(H)$.
Since $G_i$'s are regular, by Equation \ref{BKP1eq2.4} and Theorem \ref{BKP1001}, we have
\begin{align*}
\varepsilon(H)= \begin{bmatrix}
2A(\overline{G_0}) & \textbf{0}_{n_0\times n_1} & \textbf{0}_{n_0\times n_2}& \ldots & \textbf{0}_{n_0\times n_m}\\
\textbf{0}_{n_1\times n_0}& 2A(\overline{G_1}) & 2 J_{n_1\times n_2}& \ldots & 2J_{n_1\times n_m}\\
\textbf{0}_{n_2\times n_0}& 2J_{n_2\times n_1}& 2A(\overline{G_2}) & \ldots & 2J_{n_2\times n_m}\\
\vdots & \vdots & \vdots & \ddots & \vdots\\
\textbf{0}_{n_m\times n_0}& 2J_{n_m\times n_1}& 2J_{n_m\times n_2} & \ldots & 2A(\overline{G_m})
\end{bmatrix}  
\end{align*}
Choose,\[\sigma_{ij} = \begin{cases}
 \text{$0$} &\quad\text{if $i=1$ \text{ or } $j=1$}\\
 \text{$2$} &\quad\text{otherwise.}
\end{cases}\] 
One can check that, $B(M,u,\sigma)=\varepsilon(H)$.\\
By Theorem \ref{mainthm}, Equation \ref{BKP1eq2.1} and 
 \ref{BKP1eq2.2}, we get 
\begin{equation*}
\hspace{-4.4cm}\phi(\lambda, \varepsilon(H))  = \bigg(\prod_{i=0}^{m} \phi\big(\lambda, 2A(\overline{G_i})\big)\Gamma_i(\lambda)\bigg) \det(  \tilde{\varepsilon}(H))     
\end{equation*}
\begin{equation}\label{BKP1eq2.8}
\hspace{1.9cm}=\displaystyle \prod_{i=0}^{m} n_i \frac{\phi(\lambda, 2A(\overline{G_i}))}{(\lambda-2k_i)}\, \det(\tilde{\varepsilon}(H)) \,(\text{as $G_0$ is not complete}). 
\end{equation}
where 
\begin{equation*}
\tilde{\varepsilon}(H) = \begin{bmatrix}
\frac{\lambda-2k_
0}{n_0}& 0 & 0 & \ldots &0\\
0 & \frac{\lambda-2k_
1}{n_1} & -2 &  \ldots & -2 \\
0 & -2 &\frac{\lambda-2k_
2}{n_2}&   \ldots & -2 & \\
\vdots & \vdots & \vdots & \ddots & \vdots\\
0 & -2 & -2 &  \ldots &  \frac{\lambda-2k_
m}{n_m}
\end{bmatrix} .
\end{equation*}
For $0\leq i\leq m$, let $r_i=\frac{\lambda-2k_
i}{n_i}$. Then

\begin{equation}\label{BKP1eq2.7}
\tilde{\varepsilon}(H) = \begin{bmatrix}
r_0& 0 & 0 & \ldots &0\\
0 & r_1 & -2 &  \ldots & -2 \\
0 & -2 & r_2 &   \ldots & -2 & \\
\vdots & \vdots & \vdots & \ddots & \vdots\\
0 & -2 & -2 &  \ldots &  r_m
\end{bmatrix} 
\end{equation}
Therefore,   
\begin{align*}
 det(\tilde{\varepsilon}(H))& =r_0\ det[M-2(J-I)] \\ & =r_0 \big( (-2)^m \det[A + J] \big), 
\end{align*}
 where $ A= (\frac{-1}{2}M - I)$ and 
$M=(a_{ij})$ with \[ a_{ij} = \begin{cases}
\text{$r_i$} &\quad\text{if $i=j$ \text{ and } $1 \leq i \leq m$}\\
\text{$0$} &\quad\text{if } i\neq j.
\end{cases} \]
By Theorem \ref{D-J+I}, \begin{align*}
det(\tilde{\varepsilon}(H)) & = r_0 \Big( (-2)^m \Big[\Big(\frac{-1}{2}\Big)^{m} \prod_{i=1}^m(a_{ii}+2) + \Big(\frac{-1}{2}\Big)^{m-1} \sum_{i=1}^m \frac{\prod_{j=1}^m(a_{jj}+2)}{a_{ii}+2}\Big]\Big)\\
& = r_0 \prod_{i=1}^m(a_{ii}+2) \Big[1 - 2 \sum_{i=1}^m \frac{1}{a_{ii}+2} \Big]\\
& = r_0 \prod_{i=1}^m(r_i + 2) \Big[1 - 2  \sum_{i=1}^m \frac{1}{r_i + 2} \Big].
\end{align*}
Therefore, $det(\tilde{\varepsilon}(H)) = \displaystyle\frac{\lambda -2k_0}{n_0}\prod_{i=1}^m\Big(\frac{\lambda - 2( k_i -n_i)}{n_i}\Big) \Big[1- 2\sum_{i=1}^m \frac{n_i}{\lambda - 2 (k_i - n_i)} \Big]$. \\
Hence by Equation \ref{BKP1eq2.8}, the result follows. 
\end{proof}

Using Theorem \ref{BKP1006}, we deduce the following results in 
\cite{Mahato1, Patel} and \cite{Wang}.

\begin{corollary}[Theorem 3.2, \cite{Patel}]
If $a,b \geq 3$, then the eccentricity spectrum of $K_a 
 \ast K_b$ is given by \\ $Spec(\varepsilon(K_a 
 \ast K_b))=
 \begin{Bmatrix}
2\sqrt{R}\ \cos(\frac{\theta}{3}) & 0 &  2\sqrt{R}\ \cos(\frac{\theta+ 4\pi}{3}) & 2\sqrt{R}\ \cos(\frac{\theta+ 2\pi}{3})\\
1 & a+b-4 & 1 & 1
 \end{Bmatrix}$ and  $E(\varepsilon(K_a 
 \ast K_b))= 4\sqrt{R}\ \cos(\frac{\theta}{3})=2 \rho(\varepsilon(K_a 
 \ast K_b)),$ where $R=\frac{1}{3}(4ab -3(a+b) +2)$ and $\theta =\cos^{-1}\big(\frac{2(a-1)(b-1)}{\sqrt{R^3}}\big)$. \end{corollary}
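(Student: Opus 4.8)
The plan is to identify $K_a \ast K_b$ as an instance of the $K_{1,m}$-join covered by Theorem \ref{BKP1006} and then specialise. Concretely, the coalescence $K_a \ast K_b$ glues two complete graphs at one vertex, so it can be written as $K_{1,2}[G_0,G_1,G_2]$ with $G_0 = K_1$ (the shared vertex), $G_1 = K_{a-1}$, and $G_2 = K_{b-1}$; indeed $K_{1,2}$ is just the path $P_3$, its central vertex has eccentricity $1$, and joining the central vertex to all of $K_{a-1}$ and all of $K_{b-1}$ while keeping $G_1,G_2$ non-adjacent reproduces exactly two complete graphs sharing one vertex. Thus $m=2$, $n_0=1$, $n_1=a-1$, $n_2=b-1$, and since each $G_i$ is complete, each $\overline{G_i}$ is edgeless, hence $k_i = 0$ for $i=0,1,2$ and $A(\overline{G_i})$ is the zero matrix.

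Next I would apply part (i) of Theorem \ref{BKP1006} (since $G_0=K_1$ is complete). With $k_i=0$ we have $\phi(\lambda, 2A(\overline{G_i})) = \lambda^{n_i}$, so $\phi(\lambda,2A(\overline{G_i}))/(\lambda - 2k_i) = \lambda^{n_i - 1}$, and $\lambda - 2(k_i - n_i) = \lambda + 2n_i$. Substituting into the formula of Theorem \ref{BKP1006}(i) gives
\begin{align*}
\phi(\lambda, \varepsilon(K_a\ast K_b)) &= \lambda^{n_0-1}\,\lambda^{n_1-1}\lambda^{n_2-1}(\lambda+2n_1)(\lambda+2n_2)\Big[\lambda - (2\lambda + 1)\Big(\tfrac{n_1}{\lambda+2n_1} + \tfrac{n_2}{\lambda+2n_2}\Big)\Big]\\
&= \lambda^{\,a+b-4}\Big[\lambda(\lambda+2n_1)(\lambda+2n_2) - (2\lambda+1)\big(n_1(\lambda+2n_2) + n_2(\lambda+2n_1)\big)\Big],
\end{align*}
with $n_1 = a-1$, $n_2 = b-1$. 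Expanding the bracketed cubic in $\lambda$ and collecting terms (this is the only genuinely computational step) yields a depressed-type cubic $\lambda^3 + p\lambda + q$ after checking that the $\lambda^2$ coefficient vanishes; one then reads off that the nonzero eigenvalues are the three roots of this cubic and the remaining $a+b-4$ eigenvalues are $0$, matching the stated multiplicities $1,a+b-4,1,1$.

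The final step is to solve the cubic by the trigonometric (Viète) method: writing the roots as $2\sqrt{R}\cos\theta_j$ for the appropriate shifted angles forces $R = -p/3$ and $\cos(3\theta) = -q/(2\sqrt{R^3})$, and a direct check shows $R = \tfrac13(4ab - 3(a+b) + 2)$ and the phase is $\theta = \cos^{-1}\!\big(\tfrac{2(a-1)(b-1)}{\sqrt{R^3}}\big)$ (using $n_1 n_2 = (a-1)(b-1)$); positivity of the discriminant for $a,b\ge 3$ guarantees three real roots, of which the one with the largest cosine, $2\sqrt{R}\cos(\theta/3)$, is the spectral radius, and since the eigenvalues of the cubic sum to zero the energy is $E(\varepsilon(K_a\ast K_b)) = 4\sqrt{R}\cos(\theta/3) = 2\rho(\varepsilon(K_a\ast K_b))$. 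The main obstacle is purely bookkeeping: verifying that the cubic is indeed depressed (no $\lambda^2$ term) and that the algebraic identities for $R$ and $\theta$ come out exactly as claimed; there is no conceptual difficulty once the identification $K_a\ast K_b = K_{1,2}[K_1, K_{a-1}, K_{b-1}]$ is in place.
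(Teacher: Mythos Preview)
Your proposal is correct and follows essentially the same route as the paper: both identify $K_a\ast K_b = K_{1,2}[K_1,K_{a-1},K_{b-1}]$, apply Theorem~\ref{BKP1006}(i) with $n_0=1$, $n_1=a-1$, $n_2=b-1$, $k_i=0$, obtain the depressed cubic $\lambda^3 - (4ab-3(a+b)+2)\lambda - 4(a-1)(b-1)$, and finish via the trigonometric solution. Your write-up in fact spells out more of the algebra (the vanishing of the $\lambda^2$ term, the identification of $R$ and $\theta$, and the trace-zero argument for the energy) than the paper, which compresses all of this into ``after computing \ldots\ we get the required results.''
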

\begin{proof}
Clearly, $K_a 
 \ast K_b= K_{1,2}[K_1,K_{a-1},K_{b-1}]$. Then by Theorem $\ref{BKP1006}(i)$, 
$r_0 = \lambda$, $r_1 = \frac{\lambda}{a-1}$, $r_2 = \frac{\lambda}{b-1}$ and hence 
\begin{equation}\label{BKP1eq2.9}
\phi(\lambda, \varepsilon((K_a 
 \ast K_b))= \lambda^{a+b-4}\big(\lambda^3 - (4ab -3(a+b)+2)\lambda -4(a-1)(b-1)\big).  
\end{equation}
After computing the Equation \ref{BKP1eq2.9}, we get the required results. 
\end{proof}
\begin{corollary}[Lemma 3.6, \cite{Wang}]\label{BKP1018}
Let $G$ be a $r$-regular graph with  $n$ vertices and diameter two. If $r, \lambda_2, \ldots, \lambda_n$ are the eigenvalues of the adjacency matrix $A(G)$ of $G$, then
$$Spec(K_1 \vee G) = \begin{Bmatrix}
(n-r-1)\pm \sqrt{(n-r-1)^2+n} & -2(\lambda_2+1)& \ldots & -2(\lambda_n + 1)\\
1 & 1 & \ldots & 1
\end{Bmatrix}.$$
\end{corollary}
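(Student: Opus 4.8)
The plan is to recognize $K_1 \vee G$ as a $K_{1,m}$-join and apply Theorem~\ref{BKP1006}(i). Indeed $K_1 \vee G = K_{1,1}[K_1,G]$, the $K_{1,m}$-join with $m=1$, $G_0 = K_1$ and $G_1 = G$. Since $G$ is $r$-regular on $n$ vertices of diameter two, it is not complete, so $r \le n-2$, the complement $\overline{G}$ is $(n-1-r)$-regular, and every vertex of $G$ has a non-neighbour in $G$; consequently, in $K_1\vee G$ the apex has eccentricity $1$ and every vertex of $G$ has eccentricity $2$. This is exactly the configuration of Case~1 in the proof of Theorem~\ref{BKP1006}, so that argument goes through verbatim with $m=1$ (equivalently, Theorem~\ref{BKP1001} gives directly that $\varepsilon(K_1\vee G)$ is the block matrix with diagonal blocks $\mathbf{0}_{1\times 1}$ and $2A(\overline{G})$ and off-diagonal block $J_{1\times n}$). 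In the notation of Theorem~\ref{BKP1006} this corresponds to $n_0 = 1$, $k_0 = 0$, $n_1 = n$ and $k_1 = n-1-r$.

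Substituting these values into part~(i): the factor $\lambda^{n_0-1}$ equals $1$, each product over $i$ has a single term, and $\lambda - 2(k_1-n_1) = \lambda + 2r + 2$. Multiplying out the last two factors,
\begin{equation*}
(\lambda+2r+2)\Big[\lambda - \tfrac{(2\lambda+1)n}{\lambda+2r+2}\Big] = \lambda(\lambda+2r+2) - n(2\lambda+1) = \lambda^2 - 2(n-r-1)\lambda - n ,
\end{equation*}
so that
\begin{equation*}
\phi\big(\lambda, \varepsilon(K_1\vee G)\big) = \frac{\phi\big(\lambda, 2A(\overline{G})\big)}{\lambda - 2(n-r-1)}\,\Big(\lambda^2 - 2(n-r-1)\lambda - n\Big).
\end{equation*}

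Next I would apply Lemma~\ref{BKP1013}: from the eigenvalues $r, \lambda_2,\ldots,\lambda_n$ of $A(G)$ one obtains that $A(\overline{G})$ has eigenvalues $n-r-1, -(\lambda_2+1),\ldots,-(\lambda_n+1)$, hence
\begin{equation*}
\phi\big(\lambda, 2A(\overline{G})\big) = \big(\lambda - 2(n-r-1)\big)\prod_{i=2}^n\big(\lambda + 2(\lambda_i+1)\big).
\end{equation*}
The factor $\lambda - 2(n-r-1)$ cancels the denominator above — and it is not a root of $\lambda^2 - 2(n-r-1)\lambda - n$, since that would force $-n = 0$ — leaving
\begin{equation*}
\phi\big(\lambda, \varepsilon(K_1\vee G)\big) = \Big(\lambda^2 - 2(n-r-1)\lambda - n\Big)\prod_{i=2}^n\big(\lambda + 2(\lambda_i+1)\big).
\end{equation*}
Reading off the roots yields $(n-r-1)\pm\sqrt{(n-r-1)^2+n}$ from the quadratic together with $-2(\lambda_i+1)$ for $2\le i\le n$, which is the asserted spectrum; the total count $2+(n-1) = n+1 = |V(K_1\vee G)|$ confirms that nothing is lost. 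The only step requiring a little care is checking that Theorem~\ref{BKP1006} remains valid at the boundary value $m=1$, where $K_{1,1}=K_2$ has radius one and the vertex $v_1$ of $K_{1,m}$ has eccentricity one rather than two; once the block form of $\varepsilon(K_1\vee G)$ above is verified, everything that follows is routine algebra.
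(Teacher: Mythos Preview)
Your proof is correct and follows exactly the paper's approach: recognize $K_1\vee G$ as $K_{1,1}[K_1,G]$, apply Theorem~\ref{BKP1006}(i) with $n_0=1$, $n_1=n$, $k_0=0$, $k_1=n-r-1$ to obtain $\phi(\lambda,\varepsilon(K_1\vee G)) = \dfrac{\phi(\lambda,2A(\overline{G}))}{\lambda-2(n-r-1)}\big(\lambda^2-2(n-r-1)\lambda-n\big)$, and then invoke Lemma~\ref{BKP1013}. Your additional care in verifying the block form of $\varepsilon(K_1\vee G)$ directly for the borderline case $m=1$ (where $K_{1,1}=K_2$ has radius one) is a worthwhile sanity check that the paper omits.
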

\begin{proof}
Clearly, $K_1 \vee G = K_{1,1}[K_1, G]$. Then by Theorem \ref{BKP1006}(i), $n_0 = 1$, $n_1 = n$, $k_0 =0$ and $k_1= (n-r-1)$. Hence
$$\phi(\lambda, \varepsilon(K_1 \vee G)) = \frac{\phi(\lambda, 2A(\overline{G_1}))}{\lambda - 2(n-r-1)}(\lambda^2 - 2(n-r-1)\lambda -n).$$
The result follows from Lemma \ref{BKP1013}.
\end{proof}

\begin{remark}
In Corollary \ref{BKP1018}, the diameter $2$ is not mandatory, but the diameter at least $2$ is mandatory.   
\end{remark}

\begin{corollary}[Theorem 4.3, \cite{Mahato1}]
Let $W_{n+1}$ be the wheel graph on $n+1$ vertices, with $n \geq 4$. Then  
$$Spec(W_{n+1}) = \begin{Bmatrix}
(n-3)\pm \sqrt{(n-3)^2+n} & -2(\lambda_2+1)& \ldots & -2(\lambda_n + 1)\\
1 & 1 & \ldots & 1
\end{Bmatrix},$$ where $\lambda_2, \lambda_3, \ldots, \lambda_n$ are the eigen value of $A(G)$ other than $2$.
\end{corollary}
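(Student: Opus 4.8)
The plan is to realise the wheel as a join of $K_1$ with a cycle and then quote Corollary~\ref{BKP1018}. First I would note that $W_{n+1}\cong K_1\vee C_n$: the wheel on $n+1$ vertices is exactly the cycle $C_n$ together with one extra vertex made adjacent to all of it, i.e. $W_{n+1}=K_{1,1}[K_1,C_n]$. The cycle $C_n$ is a $2$-regular graph on $n$ vertices whose adjacency eigenvalues are $\lambda_j=2\cos\!\big(\tfrac{2\pi(j-1)}{n}\big)$ for $1\le j\le n$; in particular $\lambda_1=2$ is the Perron eigenvalue and $\lambda_2,\ldots,\lambda_n$ are the remaining ones, which matches the notation in the statement.

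Next I would check that the hypotheses of Corollary~\ref{BKP1018} are met with $G=C_n$ and $r=2$. Regularity is clear; for the "diameter" condition, note that $C_n$ has diameter $2$ when $n=4$ and diameter exceeding $2$ when $n\ge5$, so by the Remark following Corollary~\ref{BKP1018} (where diameter at least $2$, rather than exactly $2$, is what is really needed) the corollary applies for all $n\ge4$. Plugging $r=2$ into Corollary~\ref{BKP1018} gives
\[
Spec(W_{n+1})=Spec(K_1\vee C_n)=\begin{Bmatrix}
(n-2-1)\pm\sqrt{(n-2-1)^2+n} & -2(\lambda_2+1) & \ldots & -2(\lambda_n+1)\\
1 & 1 & \ldots & 1
\end{Bmatrix},
\]
and since $n-2-1=n-3$ this is precisely the claimed spectrum. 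Here the two eigenvalues $(n-3)\pm\sqrt{(n-3)^2+n}$ each have multiplicity $1$ and the $n-1$ values $-2(\lambda_j+1)$, $j=2,\ldots,n$, exhaust the remaining eigenvalues, giving $1+1+(n-1)=n+1=|V(W_{n+1})|$ eigenvalues in all, so the multiplicities are consistent.

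There is essentially no obstacle beyond verifying that $W_{n+1}=K_1\vee C_n$ and that the diameter bound forces $n\ge4$ (indeed $W_4=K_4$ has diameter $1$, so Corollary~\ref{BKP1018} genuinely fails there, explaining the hypothesis $n\ge4$). As an alternative route that avoids the Remark, one can apply Theorem~\ref{BKP1006}(i) directly to $K_{1,1}[K_1,C_n]$ with $m=1$, $n_0=1$, $n_1=n$, $k_0=0$ and $k_1=n-3$ (the degree of the $(n-3)$-regular graph $\overline{C_n}$), obtaining
\[
\phi(\lambda,\varepsilon(W_{n+1}))=\frac{\phi(\lambda,2A(\overline{C_n}))}{\lambda-2(n-3)}\big(\lambda^2-2(n-3)\lambda-n\big),
\]
and then using Lemma~\ref{BKP1013} to translate $Spec(A(\overline{C_n}))$ back into $Spec(A(C_n))=\{2,\lambda_2,\ldots,\lambda_n\}$, which yields the same conclusion.
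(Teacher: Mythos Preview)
Your proposal is correct and essentially matches the paper's approach: the paper identifies $W_{n+1}=K_{1,1}[K_1,C_n]$, applies Theorem~\ref{BKP1006}(i) with $n_0=1$, $n_1=n$, $k_0=0$, $k_1=n-3$ to obtain the characteristic polynomial, and then invokes Lemma~\ref{BKP1013}---which is exactly your alternative route. Your primary route through Corollary~\ref{BKP1018} (plus the Remark) is just one step removed, since that corollary is itself proved from Theorem~\ref{BKP1006}(i), so the two arguments are essentially identical.
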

\begin{proof}
Clearly, $W_{n+1} = K_{1,1}[K_1, C_n]$. Then by Theorem \ref{BKP1006}(i), $n_0 = 1$, $n_1 = n$, $k_0 =0$ and $k_1= (n-3)$. Hence,
$$\phi(\lambda, \varepsilon(W_{n+1})) = \frac{\phi(\lambda, 2A(\overline{G_1}))}{\lambda - 2(n-3)}(\lambda^2 - 2(n-3)\lambda -n).$$
By Lemma \ref{BKP1013}, the result follows.
\end{proof}

\begin{corollary}[Lemma 3.1, \cite{Li}]
For $n \geq 5$, denote $S_{n,3}$ graph with vertex set $V(S_{n,3})=\{v_1,v_2,\ldots,v_n\}$ and the edge set $E(S_{n,3}) = \{v_1v_2, v_2v_3, v_1v_3\} \cup \{v_3v_i : 4 \leq i \leq n\}$, 
$Spec(\varepsilon(S_{n,3})) =\begin{Bmatrix}
-2 & 0 & t_1 & t_2 & t_3 \\
n - 4&  1& 1 & 1 &1
\end{Bmatrix}$
where $t_1 > t_2 > t_3$ are the roots of the equation $\lambda^3+(8-2n)\lambda^2+(25-9n)\lambda+(8-4n) = 0$. Further, $-4 < t_3 = \xi(S_{n,3}) < -3$ \end{corollary}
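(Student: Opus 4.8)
The plan is to exhibit $S_{n,3}$ as a $K_{1,2}$-join and then extract its eccentricity spectrum from Theorem \ref{BKP1006}(i). \textbf{Step 1.} I would first note that in $S_{n,3}$ the vertex $v_3$ is adjacent to every other vertex, the pair $v_1,v_2$ is adjacent (so $v_1,v_2,v_3$ form a triangle), and $v_4,\dots,v_n$ form an independent set, each joined only to $v_3$. Placing $v_3$ at the centre, this says $S_{n,3}\cong K_{1,2}[K_1,K_2,\overline{K_{n-3}}]$, i.e.\ $G_0=K_1$, $G_1=K_2$, $G_2=\overline{K_{n-3}}$. Since the central graph $G_0=K_1$ is complete, part (i) of Theorem \ref{BKP1006} is the applicable case.

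\textbf{Step 2.} Next I would collect the parameters needed in that formula: $n_0=1$, $k_0=0$; $\overline{G_1}=\overline{K_2}$ is $0$-regular, so $n_1=2$, $k_1=0$ and $\phi(\lambda,2A(\overline{G_1}))=\lambda^{2}$; $\overline{G_2}=K_{n-3}$ is $(n-4)$-regular, so $n_2=n-3$, $k_2=n-4$ and $\phi(\lambda,2A(\overline{G_2}))=(\lambda-2(n-4))(\lambda+2)^{n-4}$. Substituting these: $\lambda^{n_0-1}=1$; the product $\prod_{i=1}^{2}\phi(\lambda,2A(\overline{G_i}))/(\lambda-2k_i)$ collapses to $\lambda(\lambda+2)^{n-4}$; $\prod_{i=1}^{2}(\lambda-2(k_i-n_i))=(\lambda+4)(\lambda+2)$; and the bracketed factor is $\lambda-(2\lambda+1)\big(\tfrac{2}{\lambda+4}+\tfrac{n-3}{\lambda+2}\big)$. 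Multiplying these together, the spurious denominators $\lambda+4$ and $\lambda+2$ cancel, and I expect to arrive at
$$\phi(\lambda,\varepsilon(S_{n,3}))=\lambda(\lambda+2)^{n-4}\big(\lambda^{3}+(8-2n)\lambda^{2}+(25-9n)\lambda+(8-4n)\big),$$
the cubic coming from expanding $\lambda(\lambda+2)(\lambda+4)-(2\lambda+1)\big[2(\lambda+2)+(n-3)(\lambda+4)\big]$.

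\textbf{Step 3.} Writing $f$ for the cubic factor, for $n\ge5$ one checks $f(0)=8-4n\ne0$ and $f(-2)=6(n-3)\ne0$, so $0$ and $-2$ are eigenvalues of $\varepsilon(S_{n,3})$ of multiplicities exactly $1$ and $n-4$ respectively, while the remaining three eigenvalues are the roots $t_1>t_2>t_3$ of $f$. To see these are real and distinct and to pin down $t_3$, I would evaluate $f(-4)=-28<0$, $f(-3)=5n-22>0$, $f(0)=8-4n<0$, together with $f(\lambda)\to+\infty$ as $\lambda\to+\infty$; the intermediate value theorem then gives exactly one root in each of $(-4,-3)$, $(-3,0)$, $(0,\infty)$, so $t_3\in(-4,-3)$. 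Since every remaining eigenvalue is $\ge-2>-3>t_3$, this yields $\xi(S_{n,3})=t_3$ with $-4<t_3<-3$, and the multiplicity count $1+(n-4)+3=n$ confirms $Spec(\varepsilon(S_{n,3}))$ is exactly as stated.

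\textbf{The hard part} is the algebraic simplification in Step 2: tracking the cancellations of the numerators $\phi(\lambda,2A(\overline{G_i}))$ against the denominators $\lambda-2k_i$, and of the denominators $\lambda+4,\lambda+2$ in the bracket against $\prod(\lambda-2(k_i-n_i))$, and then expanding carefully enough to recognise the target cubic. The remaining steps — recognising the $H$-join decomposition and localising $t_3$ by the intermediate value theorem — are routine.
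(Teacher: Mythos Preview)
Your proof is correct and follows essentially the same approach as the paper: both recognise $S_{n,3}\cong K_{1,2}[K_1,K_2,\overline{K_{n-3}}]$ and plug the parameters $n_0=1$, $n_1=2$, $n_2=n-3$, $k_0=k_1=0$, $k_2=n-4$ into Theorem~\ref{BKP1006}(i) to obtain the characteristic polynomial. Your write-up is in fact more complete than the paper's, which simply asserts ``after solving the characteristic polynomial, we get the required results''; you supply the sign checks $f(-4)=-28$, $f(-3)=5n-22$, $f(0)=8-4n$ that actually justify the claim $-4<t_3<-3$ and the distinctness of the $t_i$. (Incidentally, the paper's displayed factor $(\lambda-2)^{n-4}$ is a typo for $(\lambda+2)^{n-4}$, which you have correctly.)
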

\begin{proof}
One can see that, $S_{n,3} = K_{1,2}[K_1,K_2,\overline{K_{n-3}}]$. Then by Theorem \ref{BKP1006}(i) $n_0 = 1$, $n_1 = 2$, $n_2 = n-3$, $k_0 = k_1 =0$ and $k_2= (n-4)$. Hence,$$\phi(\lambda, \varepsilon(S_{n,3})) = (\lambda -2)^{n-4} (\lambda^4 + (8 -2n)\lambda^3 + (25-9n)\lambda^2 + (8 -4n)\lambda).$$   
After solving the characteristic polynomial, we get the required results.
\end{proof}

Also using Theorem \ref{BKP1006}, we prove the following interesting result which is a much more general result than the result given in (Theorem 3.3, \cite{Patel}). 

\begin{theorem}\label{BKP1007}
Let $H = K_{1,m}[G_0,G_1,\ldots,G_m]$, where $\overline{G_0}$ is a $r$-regular graph with $\ell$ vertices and $\overline{G_i}$ is a $k$-regular graph with $n$ vertices for $1 \leq i \leq m$. 
Then 

\begin{enumerate}[label = (\roman*)]
\item If $G_0$  is  complete, then $\phi(\lambda I, \varepsilon(H)) = \\  \bigg(\displaystyle\prod_{i=1}^m  \frac{\phi\big(\lambda, 2A(\overline{G_i})\big)}{(\lambda-2k)} \bigg) \lambda^{\ell-1}
\big[(\lambda -2(k-n))^{m-1}(\lambda^2 -2(k+n(m-1))\lambda -\ell mn)\big]$,

\begin{table}[h!]
\renewcommand*{\arraystretch}{1.5}
    \centering
 \begin{tabular}[c]{|c |c| c| }
 \hline
   & $G_1,\ldots,G_m$ are complete & $G_1,\ldots,G_m$ are not  complete \\
\hline
$N_+(\varepsilon(H))$ & 1 & $\sum_{i=1}^m N_+\big(\varepsilon(A(\overline{G_i})) + 1-m$\\
\hline
$N_0(\varepsilon(H))$ & $m(n-1)+ \ell -1$ & $\sum_{i=1}^m N_0\big(\varepsilon(A(\overline{G_i}))\big) +\ell - 1$\\
\hline
$N_-(\varepsilon(H))$ & $m$ & $\sum_{i=1}^m N_-\big(\varepsilon(A(\overline{G_i}))\big)+m$\\
\hline
$\rho(H)$ &$n(m-1)+\sqrt{(n(m-1))^2+\ell mn}$ &  $k+n(m-1)+\sqrt{(k+n(m-1))^2+\ell mn}$\\
\hline
$E(\varepsilon(H))$ & $2\rho(H)$ & $\sum_{i=1}^m E\big(\varepsilon(2A(\overline{G_i}))\big)+2\rho(H) -4km$\\
\hline
\end{tabular}
\end{table}
\item If $G_0$ is not complete but $k_0$-regular, then $\phi(\lambda I, \varepsilon(H)) = \\ \displaystyle  \frac{\prod_{i=0}^m\phi(\lambda, 2A(\overline{G_i}))}{(\lambda-2k)^m} \  \{\lambda - 2(k+n(m-1))\}\{\lambda -2(k-n)\}^{m-1}$,  
\begin{table}[h!]
 \renewcommand*{\arraystretch}{1.5}
    \centering
 \begin{tabular}[c]{|c |c| c| }
 \hline
   & $G_1,\ldots,G_m$ are complete & $G_1,\ldots,G_m$ are not  complete \\
  
\hline
$N_+(\varepsilon(H))$ & $N_+\big(\varepsilon(A(\overline{G_0}))\big)+ 1$ & $\sum_{i=0}^m N_+\big(\varepsilon(A(\overline{G_i}))\big) + 1 -m $\\
\hline
$N_0(\varepsilon(H))$ & $N_0\big(\varepsilon(A(\overline{G_0}))\big) + m(n-1)$ & $\sum_{i=0}^m N_0\big(\varepsilon(A(\overline{G_i}))\big)$\\
\hline
 $N_-(\varepsilon(H))$ & $N_-\big(\varepsilon(A(\overline{G_0}))\big) + m-1$ & $\sum_{i=0}^m N_-\big(\varepsilon(A(\overline{G_i}))\big)+m-1$\\
\hline
$\rho(H)$ & $2n(m-1)$ & $2(k+n(m-1))$ \\
\hline
$E(\varepsilon(H))$& $E\big({\varepsilon}(2A(\overline{G_0}))\big)  + 2\rho(H) $ & 
$\sum_{i=0}^m
E\big({\varepsilon}(2A(\overline{G_i}))\big)+ 2\rho(H) - 4km$\\
\hline
\end{tabular}
\end{table}
\end{enumerate}
\end{theorem}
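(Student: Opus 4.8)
The statement is the special case of Theorem \ref{BKP1006} in which all the ``leaf'' graphs share the same parameters, so the plan is to substitute $n_i=n$, $k_i=k$ for $1\le i\le m$ and $n_0=\ell$, $k_0=r$ into the two formulas of Theorem \ref{BKP1006} and simplify. In case (i), $G_0$ complete forces $k_0=0$; the product $\prod_{i=1}^m(\lambda-2(k_i-n_i))$ collapses to $(\lambda-2(k-n))^m$, the sum $\sum_{i=1}^m\frac{n_i}{\lambda-2(k_i-n_i)}$ to $\frac{mn}{\lambda-2(k-n)}$, and the bracket in Theorem \ref{BKP1006}(i) becomes $\frac{1}{\lambda-2(k-n)}\bigl(\lambda(\lambda-2(k-n))-(2\lambda+\ell)mn\bigr)=\frac{1}{\lambda-2(k-n)}\bigl(\lambda^2-2(k+n(m-1))\lambda-\ell mn\bigr)$; multiplying back by $(\lambda-2(k-n))^m$, by $\lambda^{\ell-1}$, and by $\prod_{i=1}^m\frac{\phi(\lambda,2A(\overline{G_i}))}{\lambda-2k}$ gives the displayed characteristic polynomial. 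Case (ii) is the same substitution into Theorem \ref{BKP1006}(ii), where $1-2\sum_{i=1}^m\frac{n_i}{\lambda-2(k_i-n_i)}$ simplifies to $\frac{\lambda-2(k+n(m-1))}{\lambda-2(k-n)}$, producing the factor $\{\lambda-2(k+n(m-1))\}\{\lambda-2(k-n)\}^{m-1}$.

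\textbf{Spectrum and inertia.} From the factored characteristic polynomials I would read off the eigenvalues piece by piece: $0$ with multiplicity $\ell-1$ in case (i) (absorbed into $\phi(\lambda,2A(\overline{G_0}))$ in case (ii)); $2(k-n)$ with multiplicity $m-1$; for each $i$ the full spectrum of $2A(\overline{G_i})$ with exactly one copy of the Perron value $2k$ removed by the $(\lambda-2k)$ in the denominator (when $G_i=K_{n_i}$ this block is the zero matrix, contributing $\lambda^{n-1}$); and the two roots of $\lambda^2-2(k+n(m-1))\lambda-\ell mn$ in case (i), or the single root $2(k+n(m-1))$ in case (ii). For the inertia one uses that $\overline{G_i}$ $k$-regular on $n$ vertices forces $k\le n-1$, hence $2(k-n)<0$; that the quadratic has product of roots $-\ell mn<0$, hence exactly one positive and one negative root; and that the deleted value $2k$ is positive precisely when the $G_i$ are non-complete and $0$ when they are complete. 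Tallying the positive, zero and negative contributions (and checking the totals sum to $\ell+mn=|V(H)|$) yields every entry of both tables, the two columns corresponding to $k=0$ versus $k\ge1$.

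\textbf{Spectral radius and energy.} The dominant eigenvalue is the positive root $\lambda_{+}=(k+n(m-1))+\sqrt{(k+n(m-1))^2+\ell mn}$ of the quadratic in case (i), and $2(k+n(m-1))$ in case (ii). To see it is the one of largest modulus, compare it with the negative root of the quadratic (smaller in modulus since the sum of roots $2(k+n(m-1))$ is positive), with $|2(k-n)|=2(n-k)$, and with $\rho(2A(\overline{G_i}))=2k$ and $\rho(2A(\overline{G_0}))=2r$; here $k+n(m-1)\ge n>k$ together with $\sqrt{(k+n(m-1))^2+\ell mn}\ge k+n(m-1)$ handles the first comparisons. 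The energy then follows from $\operatorname{tr}(\varepsilon(H))=0$, which forces the sum of the positive eigenvalues to equal the sum of the moduli of the negative ones, each equal to $E(\varepsilon(H))/2$: when the $G_i$ are complete the only positive eigenvalue is $\rho(H)$, so $E(\varepsilon(H))=2\rho(H)$; when the $G_i$ are non-complete the positive eigenvalues are $\rho(H)$ together with the positive eigenvalues of the $2A(\overline{G_i})$ minus the $m$ deleted copies of $2k$, and the resulting arithmetic collapses to $\sum_i E(2A(\overline{G_i}))+2\rho(H)-4km$ (and similarly for case (ii)).

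\textbf{Main obstacle.} The substitution into Theorem \ref{BKP1006} and the inertia bookkeeping are routine; the delicate step is certifying that the claimed value really is the spectral radius, i.e.\ that $\lambda_{+}$ (resp.\ $2(k+n(m-1))$) dominates $|2(k-n)|$ and the Perron values $2k$ and $2r$ of the diagonal blocks. In the complete-$G_i$ column this forces comparisons such as $n(m-1)\ge r$, so the cleanest route is to isolate these inequalities as a small lemma (using $r\le\ell-1$, $k\le n-1$ and $m\ge 2$) and dispose of them before invoking the trace-zero argument for the energy.
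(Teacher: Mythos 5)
Your proposal follows essentially the same route as the paper: both proofs simply substitute $n_0=\ell$, $n_i=n$, $k_i=k$ into Theorem \ref{BKP1006}, simplify the product and the sum to obtain the factored characteristic polynomials, and then read off the spectrum, inertia, spectral radius and energy. In fact you supply more detail than the paper does (the trace-zero energy argument, the inertia bookkeeping, and the explicit comparison $n(m-1)\ge r$ needed to certify the spectral radius in the complete-$G_i$ column), all of which the paper compresses into ``the results follow.''
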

\begin{proof} The proof depends when $G_0$ is complete or not. So, we have the following two cases. \\
\noindent\textbf{Case 1.} $G_0$ is  complete.\\
By Theorem \ref{BKP1006}(i), let $n_0 =\ell$, $n_i =n$ and  $k_i = k$, for all $i= 1,2,\ldots,m$.
Hence, 
$$\phi(\lambda, \varepsilon(H)) = \bigg(\displaystyle\prod_{i=1}^m  \frac{\phi(\lambda, 2A(\overline{G_i}))}{(\lambda-2k)} \bigg) \lambda^{\ell-1}
\big[(\lambda -2(k-n))^{m-1}(\lambda^2 -2(k+n(m-1))\lambda -\ell mn)\big].$$

Thus $\lambda_1=b + \sqrt{b^2 + \ell mn}$ is a positive eigenvalue of $\varepsilon(H)$ with multiplicity 1,  $\lambda_2 = b -\sqrt{b^2 + \ell mn}$, where $b =k+n(m-1)$ and $\lambda_3=2(k-n)$ are the 
negative eigenvalues of $\varepsilon(H)$ with multiplicity 1 and $m-1$, respectively.  
So, the spectral radius of $\varepsilon(H)$ is  $\rho(\varepsilon(H))=\lambda_1$  and $\xi(\varepsilon(G)) = \lambda_3$ is the smallest eigenvalue of $\varepsilon(H)$ (because $k<n$).  
From the characteristic polynomial $\phi(\lambda, \varepsilon(H))$ of $\varepsilon(H)$, the results follow.   

\noindent\textbf{Case 2.} $G_0$ is not complete but $k_0$-regular.\\
Put $n_i =n$ and  $k_i = k$, for all $i= 1,2,\ldots,m$ in Theorem \ref{BKP1006}(ii), we see that\\ $\phi(\lambda I, \varepsilon(H)) =  \displaystyle  \frac{\prod_{i=0}^m\phi(\lambda, 2A(\overline{G_i}))}{(\lambda-2k)^m} \  \big\{\lambda - 2(k+n(m-1))\big\}\big\{\lambda -2(k-n)\big\}^{m-1}$.

Thus $\lambda_1= 2(k+n(m-1))$ is a positive eigenvalue of $\varepsilon(H)$ with multiplicity 1,  $\lambda_2 =2(k-n)$ is the 
negative eigenvalue of $\varepsilon(H)$ with multiplicity $m-1$. So, $\rho(H) = 2(k+n(m-1))$. Hence the results follow.   
\end{proof}

As an immediate consequence of Theorem \ref{BKP1007}, we deduce the following results in \cite{Mahato1, Patel}.
\begin{corollary}[Theorem 3.1, \cite{Mahato1}]
Let  $K_{1,m}$ be the star graph on $m+1$ vertices. Then $Spec({\varepsilon}(K_{1,m})) = \begin{Bmatrix}
(m-1)\pm \sqrt{(m+1)^2 -3(m+1) + 3} & -2 \\
1 & m-1
\end{Bmatrix}$, and $\det(K_{1,m})= (-1)^mm2^{m-1}$. Further, if $m \geq 2$, then the least eigenvalue of $\varepsilon(K_{1,m})$ is $-2$.
\end{corollary}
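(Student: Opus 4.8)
The plan is to recognise $K_{1,m}$ as the trivial $K_{1,m}$-join $K_{1,m}[K_1,K_1,\ldots,K_1]$ and then apply Theorem~\ref{BKP1007}(i) (equivalently, the $\ell=n=1$ specialisation of Theorem~\ref{BKP1006}(i)). Each factor $G_i\cong K_1$ is complete, and $\overline{G_i}\cong K_1$ is $0$-regular on one vertex, so in the notation of Theorem~\ref{BKP1007}(i) we take $\ell=1$, $r=0$, $n=1$, $k=0$. Since $A(\overline{G_i})=[0]$, we have $\phi(\lambda,2A(\overline{G_i}))=\lambda$ and $\lambda-2k=\lambda$, so each quotient $\phi(\lambda,2A(\overline{G_i}))/(\lambda-2k)$ equals $1$, the factor $\lambda^{\ell-1}$ is $\lambda^0=1$, and the bracketed polynomial collapses. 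First I would carry out this bookkeeping carefully and conclude
$$\phi(\lambda,\varepsilon(K_{1,m}))=(\lambda+2)^{m-1}\big(\lambda^2-2(m-1)\lambda-m\big).$$

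Next I would read off the spectrum. The quadratic factor has roots $(m-1)\pm\sqrt{(m-1)^2+m}$, and the only algebra involved is the elementary identity $(m-1)^2+m=m^2-m+1=(m+1)^2-3(m+1)+3$, which rewrites the radical into the form stated in the corollary; the eigenvalue $-2$ then carries multiplicity $m-1$, which accounts for all $m+1$ eigenvalues. For the determinant I would use $\det\varepsilon(K_{1,m})=(-1)^{m+1}\phi(0,\varepsilon(K_{1,m}))$ (there are $m+1$ vertices) together with $\phi(0,\varepsilon(K_{1,m}))=2^{m-1}(-m)$, giving $\det\varepsilon(K_{1,m})=(-1)^{m+1}(-m)2^{m-1}=(-1)^m m\,2^{m-1}$.

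Finally, for the least-eigenvalue claim when $m\ge 2$, I would compare $(m-1)-\sqrt{m^2-m+1}$ with $-2$: the inequality $(m-1)-\sqrt{m^2-m+1}>-2$ is equivalent to $\sqrt{m^2-m+1}<m+1$, i.e.\ to $3m>0$, which holds for $m\ge 1$; since $m-1\ge 1$ the value $-2$ actually occurs as an eigenvalue, hence it is the least one. There is no genuine obstacle here: the whole argument is a direct specialisation of Theorem~\ref{BKP1007}, and the only care required is tracking the collapsing factors in the characteristic polynomial and the two short algebraic verifications above.
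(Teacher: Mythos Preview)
Your proposal is correct and takes essentially the same approach as the paper: the paper's own proof is the single line ``If $G_i=K_1$ for $0\le i\le m$ in Theorem~\ref{BKP1007}(i), we get the result,'' and you have simply carried out that specialisation in full detail, including the determinant and least-eigenvalue verifications that the paper leaves implicit.
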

\begin{proof}
If $G_i= K_1$, for $0 \leq i \leq m$ in Theorem \ref{BKP1007}(i), we get the result.   \end{proof}

\begin{corollary}[Theorem 3.3, \cite{Patel}]
The eccentricity spectrum of $W_{n+1}^{(m)}$ is
$Spec(\varepsilon(W_{n+1}^{(m)}))=\begin{Bmatrix}
b+\sqrt{b^2+mn} & 0 & b- \sqrt{b^2+mn} & -2n\\
1 & m(n-1) & 1 & m-1
\end{Bmatrix}$ and \\$E({\varepsilon}(W_{n+1}^{(m)})) = 2(b+ \sqrt{b^2 + mn)} = 2\rho(W_{n+1}^{(m)})$, where $b = n(m-1)$.
\end{corollary}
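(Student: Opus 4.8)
The plan is to realize the windmill graph as a $K_{1,m}$-join of complete graphs and then specialize Theorem~\ref{BKP1007}(i). As recorded in the introduction, $W_{n+1}^{(m)} \cong K_{1,m}[K_1,K_n,K_n,\ldots,K_n]$, where the center of the star is replaced by $K_1$ and each of the $m$ leaves is replaced by a copy of $K_n$. In the notation of Theorem~\ref{BKP1007} this says $G_0 = K_1$ is complete, $\overline{G_0}=K_1$ is $r$-regular on $\ell$ vertices with $\ell = 1$ (and $r=0$), and $\overline{G_i}=\overline{K_n}$ is $k$-regular on $n$ vertices with $k=0$ (since $\overline{K_n}$ is edgeless); moreover every $G_i$ is complete, so we are in the first column of the table in part~(i).

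Next I would substitute these parameters into the characteristic polynomial of Theorem~\ref{BKP1007}(i). Because $\overline{G_i}=\overline{K_n}$ has adjacency matrix $\mathbf 0_{n\times n}$, we get $\phi(\lambda,2A(\overline{G_i}))=\lambda^n$ and $\lambda-2k=\lambda$, so each factor $\phi(\lambda,2A(\overline{G_i}))/(\lambda-2k)$ equals $\lambda^{n-1}$; with $\ell=1$ the factor $\lambda^{\ell-1}$ is $1$, while $2(k-n)=-2n$, $2(k+n(m-1))=2b$ with $b=n(m-1)$, and $\ell mn=mn$. The formula therefore collapses to
$$\phi(\lambda,\varepsilon(W_{n+1}^{(m)})) = \lambda^{m(n-1)}\,(\lambda+2n)^{m-1}\,\bigl(\lambda^2-2b\lambda-mn\bigr),$$
and solving the quadratic factor gives the two simple eigenvalues $b\pm\sqrt{b^2+mn}$, while the remaining factors give $\lambda=0$ with multiplicity $m(n-1)$ and $\lambda=-2n$ with multiplicity $m-1$; this is exactly the claimed spectrum (and one checks $m(n-1)+(m-1)+2 = mn+1 = |V(W_{n+1}^{(m)})|$).

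Finally, for the energy I would sum $|\lambda_i|$ over this spectrum: the eigenvalue $0$ contributes nothing, $-2n$ contributes $2n(m-1)=2b$, and since $mn>0$ we have $\sqrt{b^2+mn}>b$, so $b-\sqrt{b^2+mn}<0$ and the two simple eigenvalues together contribute $(b+\sqrt{b^2+mn})+(\sqrt{b^2+mn}-b)=2\sqrt{b^2+mn}$. Hence $E(\varepsilon(W_{n+1}^{(m)})) = 2b+2\sqrt{b^2+mn}=2(b+\sqrt{b^2+mn})$, and since this quantity is the eigenvalue of largest modulus it equals $\rho(\varepsilon(W_{n+1}^{(m)}))$, so $E(\varepsilon(W_{n+1}^{(m)}))=2\rho(W_{n+1}^{(m)})$; alternatively one reads $\rho$ and $E$ directly from the first column of the table in Theorem~\ref{BKP1007}(i) with $\ell=1$. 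There is no real obstacle in this argument: the only points needing a word of care are the identification of $W_{n+1}^{(m)}$ as a $K_{1,m}$-join and the observation that $\overline{K_n}$ is $0$-regular, which forces $k=0$ throughout.
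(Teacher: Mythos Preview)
Your proposal is correct and follows exactly the paper's approach: identify $W_{n+1}^{(m)}\cong K_{1,m}[K_1,K_n,\ldots,K_n]$ and invoke Theorem~\ref{BKP1007}(i). The paper's own proof is the one-line remark that this isomorphism plus Theorem~\ref{BKP1007}(i) gives the result, so you have simply (and correctly) spelled out the specialization $\ell=1$, $k=0$ and the resulting factorization of the characteristic polynomial that the paper leaves implicit.
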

\begin{proof}
As $W_{n+1}^{(m)}\cong K_{1,m}[K_1,K_{n_1},\ldots,K_{n_m}]$, where $n = n_i, 1\leq i \leq m$ and by Theorem \ref{BKP1007}(i), the result follows.  
\end{proof}

\begin{corollary}[Corollary 3.3.1, \cite{Patel}]
Let $W_{n+1}^{(m)}$ be the windmill graph. Then \\
(i) $N_+(\varepsilon(\overline{H})) = 1$.
(ii) $N_0(\varepsilon(\overline{H}))=m(n-1)$.
(iii)$N_{-}(\varepsilon(\overline{H}))= m$.
\end{corollary}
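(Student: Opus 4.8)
The plan is to obtain the inertia of $\varepsilon(W_{n+1}^{(m)})$ by counting signs in the eccentricity spectrum already recorded in the preceding corollary. Recall from the Introduction that $W_{n+1}^{(m)} \cong K_{1,m}[K_1, K_{n_1}, \ldots, K_{n_m}]$ with every $n_i = n$, and that the preceding corollary yields
\[
Spec(\varepsilon(W_{n+1}^{(m)})) = \begin{Bmatrix}
b+\sqrt{b^2+mn} & 0 & b-\sqrt{b^2+mn} & -2n\\
1 & m(n-1) & 1 & m-1
\end{Bmatrix},
\qquad b = n(m-1).
\]

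First I would note that $mn > 0$ forces $\sqrt{b^2+mn} > |b| \ge b$, so the eigenvalue $b+\sqrt{b^2+mn}$ is strictly positive while $b-\sqrt{b^2+mn}$ is strictly negative; also $-2n < 0$. Hence the positive eigenvalues of $\varepsilon(W_{n+1}^{(m)})$ are exhausted by $b+\sqrt{b^2+mn}$ with multiplicity $1$, giving $N_+ = 1$; the zero eigenvalue has multiplicity $m(n-1)$, giving $N_0 = m(n-1)$; and the negative eigenvalues are $b-\sqrt{b^2+mn}$ (multiplicity $1$) together with $-2n$ (multiplicity $m-1$), giving $N_- = 1 + (m-1) = m$. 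This is exactly (i)--(iii).

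Alternatively, and more in the spirit of this section, the statement is the $\ell = 1$ specialization of Theorem \ref{BKP1007}(i): take $G_0 = K_1$, so that $\overline{G_0} = K_1$ is $0$-regular on $\ell = 1$ vertex, and $G_i = K_n$ (complete, so $\overline{G_i}$ is $0$-regular on $n$ vertices) for $1 \le i \le m$; the ``$G_1,\ldots,G_m$ are complete'' column of the table in Theorem \ref{BKP1007}(i) then reads $N_+(\varepsilon(H)) = 1$, $N_0(\varepsilon(H)) = m(n-1) + \ell - 1 = m(n-1)$, and $N_-(\varepsilon(H)) = m$. Either way there is no genuine obstacle: the only points needing attention are the routine identification $W_{n+1}^{(m)} \cong K_{1,m}[K_1, K_n, \ldots, K_n]$ and the trivial remark that $mn>0$, so none of the displayed eigenvalues accidentally collapses to $0$ and the sign count is unambiguous; both are immediate.
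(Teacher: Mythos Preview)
Your proposal is correct, and your second (``alternative'') argument is exactly the paper's proof: the paper simply says ``By Theorem \ref{BKP1007}(i), the result follows,'' i.e.\ it reads off the $\ell=1$, $G_i=K_n$ column of that table. Your first argument, reading the signs directly from the preceding spectrum corollary, is an equally valid and slightly more self-contained route to the same conclusion.
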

\begin{proof}
By Theorem \ref{BKP1007}(i), the result follows. 
\end{proof}

\section*{Funding} 
The first author received support from the CSIR-UGC Junior Research Fellowship (UGC Ref. No.: 1085/(CSIR-UGC NET JUNE 2019)). The second author, support came from the National Board of Higher Mathematics (NBHM), Government of India, under grant No. 02011/50/2023 NBHM dated October 19, 2023.

\section*{Data availability} No data were used for the research described in the article.

\section*{Declarations} 

\noindent\textbf{Conflict of interest.} The authors declare that they have no conflict of interest.

\end{document}